\def\Pp{{\mathbb P}}
\newcommand{\uex}{u_\text{ex}}
\newtheorem{thm}{Theorem}[section]
\newtheorem{prop}[thm]{Proposition}
\newtheorem{lem}[thm]{Lemma}
\newtheorem{cor}[thm]{Corollary}
\newtheorem{remark}{Remark}
\renewcommand{\O}{\Omega}
\newcommand{\up}{{u}_{\pi}}
\newcommand\E{{E}}  % element
\newcommand\Th{{\mathcal T}_h}
\newcommand\R{\mathbb{R}}
\newcommand{\VE}{V_{h|\E}}
\def\wP{\Pi}
\def\decapita#1{}
\def\grecomultibold#1#2{\grecobolddef#1\def\secondobold{#2}%
    \ifx#2\finemultibold\let\next\relax\let\secondobold\relax
    \else\let\next\grecomultibold
    \fi\expandafter\next\secondobold}
\def\grecobolddef#1{%
  \edef\dadef{bf\expandafter\decapita\string#1}%
  \expandafter\def\csname\dadef\endcsname{{\neretto #1}}}
\def\neretto#1{\setbox0=\hbox{\mathsurround=0pt$#1$}%
  \kern.02em\copy0 \kern-\wd0
  \kern-.02em\copy0 \kern-\wd0
  \raise.03em\box0 \kern.02em}
\def\wbox#1;#2;{\vbox{\hrule\hbox{\vrule height#1mm\kern#2mm\vrule
  height#1mm}\hrule}}
\let\phi\varphi
\let\b\b
\newcommand{\ffxi}  {\mbox{\boldmath $\xi$\unboldmath}}
\def\hpoint#1.#2.#3{{\underline{#1}}_{#2}\cdot
 {\underline{\mathop{\smash{#3}\vphantom{{#1}_{#2}}}}}}
\def\npointp#1.#2{{\underline{#1}}\cdot
 {\underline{\mathop{\smash{#2}\vphantom{{#1}}}}}}
\def\beq{\begin{equation}}
\def\enq{\end{equation}}
\def\PR{{\cal R}}
\def\tri{| \! | \! |}
\author{
%L. Beir\~ao da Veiga, C. Lovadina, A. Russo
L. Beir\~ao da Veiga\thanks{
Dipartimento di Matematica e Applicazioni, Universit\`a di Milano--Bicocca,
Via Cozzi 53, I-20153, Milano, Italy,
and IMATI del CNR, Via Ferrata 1, 27100 Pavia, Italy,
lourenco.beirao@unimib.it},
C. Lovadina\thanks{
Dipartimento di Matematica, Universit\`a di Pavia,
and IMATI del CNR, Via Ferrata 1, 27100 Pavia, Italy,
carlo.lovadina@unipv.it},
A. Russo\thanks{
Dipartimento di Matematica e Applicazioni, Universit\`a di Milano--Bicocca,
Via Cozzi 53, I-20153, Milano, Italy,
and IMATI del CNR, Via Ferrata 1, 27100 Pavia, Italy,
alessandro.russo@unimib.it}
}
\date{}
\title{Stability Analysis for the Virtual Element Method}
\begin{document}
% ------------------------------------------------------------------------------------------------------------

\maketitle

\begin{abstract}
We analyse the Virtual Element Methods (VEM) on a simple elliptic model problem, allowing for more general meshes than the one typically considered in the VEM literature. For instance, meshes with arbitrarily small edges (with respect to the parent element diameter), can be dealt with.
Our general approach applies to different choices of the stability form, including, for example, the ``classical'' one introduced in \cite{volley}, and a recent one presented in \cite{wriggers}.
Finally, we show that the stabilization term can be simplified by dropping the contribution of the internal-to-the-element degrees of freedom. The resulting stabilization form, involving only the boundary degrees of freedom, can be used in the VEM scheme without affecting the stability and convergence properties.
The numerical tests are in accordance with the theoretical predictions.
\end{abstract}

{
% ----------------------------------------------------
\section{Introduction}
% ----------------------------------------------------

% \begin{itemize}

%\item  MINOR: check meglio il regularity result in \eqref{eq:app:reg}. Ma dovrebbe valere, vedi ad es. Grisvard, Corollary 4.4.3.8.
%
%% \item Carlo, please dai un check careful (for a double check) a tutte le costanti del Lemma 4.1 e del Teorema 4.2 ... anzi meglio se dai anche tu un final careful reading a tutto!

% \end{itemize}

The virtual element method (VEM) has been introduced recently in \cite{volley,VEM-elasticity,Brezzi:Marini:plates,projectors} as a generalization of the finite element method that allows to make use of general polygonal/polyhedral meshes. The virtual element method, that enjoyed an increasing interest in the recent literature, has been developed in many aspects and applied to many different problems; we here cite only a few works \cite{BeiraoLovaMora,hitchhikers,variable-primal,Benedetto-VEM-2,Berrone-VEM,BFM,Paulino-VEM,Gatica,VemSteklov,Helmo-PPR,wriggers,Vacca-1} in addition to the ones above, without pretending to be exhaustive. We also note that VEM is not the only recent method that can make use of polytopal meshes: we refer, again as a minimal sample list of papers, to \cite{Cangiani:Georgoulis:Houston2014,DiPietro-Ern-1,Droniou-gradient,VEM-topopt,Gillette-1,ST04,POLY37}.

A VEM scheme may be seen as a Galerkin method built by means of two parts:
\begin{enumerate}
 \item a first term strongly consistent on polynomials, which guarantees the accuracy;
 \item a stabilization term, involving a suitably designed bilinear form, typically written as the sum of two contributions:
 $s_E(\cdot,\cdot)= s_E^\circ(\cdot,\cdot) + s_E^\partial(\cdot,\cdot)$. The form $s_E^\circ(\cdot,\cdot)$ uses the interior degrees of freedom, while the form $s_E^\partial(\cdot,\cdot)$ uses the boundary degrees of freedom.
\end{enumerate}

We remark that under the usual assumptions on the polygonal mesh (namely, shape regularity and the property that
the length of each edge is uniformly comparable to the diameter of the parent element), devising and proving the stability features of the form $s_E(\cdot,\cdot)$ is quite simple.
This is the reason why, in the VEM literature, the focus is on describing explicit expression for
$s_E(\cdot,\cdot)$, while the proof of the corresponding stability result is often omitted.
Instead, the stability analysis is more involved if one allows for
more general mesh assumptions (for instance dropping the edge length condition mentioned above).

The present paper focuses on the stability properties of the bilinear form $s_E(\cdot,\cdot)$.
Although the approach we follow is quite general, we here consider the problem and notation of \cite{volley,hitchhikers} in order to keep the presentation clearer.
Our main results are the following.
\begin{itemize}
 \item {\em The development of a new strategy to prove the convergence of the VEM schemes}, which requires weaker stability conditions on $s_E(\cdot,\cdot)$ than the usual ones. Our approach is used to analyse the situations described below.
 \begin{enumerate}
  \item VEM schemes using a sequence of meshes with minor restrictions than the ones usually requested. In particular, our analysis covers some instance of shape regular meshes with edges arbitrarily short with respect to the diameters of the elements they belong to.
  \item Different instances of stabilization forms $s_E(\cdot,\cdot)$. Among them, we provide a detailed analysis of both the standard choice presented in \cite{volley,hitchhikers}, and a new one proposed in \cite{wriggers}. In addition, it is worth remarking that a stability analysis for this latter choice could be developed using the tools of \cite{volley}. However, the resulting error bound would be sub-optimal, in contrast with the numerical evidences. Our new approach, instead, leads to establish error bounds in perfect accordance with the numerical tests.  We also show that the choice presented in \cite{wriggers}, can have superior robustness properties \emph{in the presence of ``small'' edges}.
 \end{enumerate}
 \item {\em The development of a stability result concerning the choice of $s_E(\cdot,\cdot)$ presented in \cite{volley,hitchhikers} that is valid under more general mesh assumptions}. Essentially, we prove that the stabilization term is equivalent to the $H^1$ seminorm, where one of the two equivalence constants logarithmically degenerates in presence of ``small'' edges.
 \item {\em An interesting result regarding the structure of $s_E(\cdot,\cdot)$}. More precisely, we prove that the internal term $s_E^\circ(\cdot,\cdot)$ can be dropped without any detriment to the stability features of the underlying VEM scheme.
 \end{itemize}

A brief outline of the paper is as follows. We present the continuous model problem and we review its virtual element discretization in Section \ref{sec:1}.
In Section \ref{sec:techres} we develop a set of basic technical lemmas concerning virtual elements and polygonal elements. In Section \ref{sec:2} we present and develop our error analysis strategy. Afterwards, in Section \ref{sec:3} we apply such an approach in order to analyse some existing choices of the stability form, under more general mesh assumptions than the ones typically adopted in the VEM literature.
Finally, we present some numerical tests in Section \ref{sec:4}.
}

% ----------------------------------------------------
\section{The continuous and discrete problems}\label{sec:1}
% ----------------------------------------------------

In this section we briefly present the continuous problem and its discretization with the Virtual Element Method. More details can be found in \cite{volley,hitchhikers}.

% ----------------------------------------------------
\subsection{The continuous problem}
% ----------------------------------------------------

As a model elliptic problem we consider the diffusion problem in primal form.
Defining $(\cdot,\cdot)$ as the scalar product in $L^2$,
and $a(u,v):=(K \nabla u, \nabla v)$, the variational formulation of the problem reads:
\begin{equation}\label{cont-pbl}
\left\lbrace{
\begin{aligned}
&\mbox{Find } u\in V:=H^1_0(\O)~\mbox{such that}\\
&a(u,v)=(f,v)\quad \forall v\in V,
\end{aligned}
} \right.
\end{equation}
where $\O\subset \R^2$ is a polygonal domain and the loading $f \in L^2(\O)$.
The diffusion symmetric tensor $K=K(x,y)$ is assumed to satisfy:
$$
c|\ffxi|^2 \le \ffxi\cdot K(x,y)\ffxi \le C |\ffxi|^2 \qquad \forall\ffxi \in \R^2, \quad
\forall (x,y)\in \O .
$$
Above, $|\cdot|$ denotes the euclidean norm in $\R^2$.

It is well known that problem \eqref{cont-pbl} has a
unique solution, because our assumptions on $K$
and the Poincar\'e inequality yield:
\begin{equation}\label{ellipt-a}
 a(u,v)\le M\, |u|_{H^1(\O)}|v|_{H^1(\O)},\qquad a(v,v)\ge\,\alpha\,||v||^2_{H^1(\O)}\qquad \forall u,\,v\in V,
\end{equation}
with $0 < \alpha < M < 1$.

Note that the bilinear form $a(\cdot,\cdot)$ in~\eqref{ellipt-a} can obviously be split as
\begin{equation}\label{dec_a}
 a(v,w)=\sum_{{\E\in \Th}}a_E(v,w) \quad \textrm{ with } \quad
a_E(v,w) : = \int_E \nabla v \cdot \nabla w
\end{equation}
for all $v,w \in V$.

% ----------------------------------------------------
\subsection{The virtual element method}
% ----------------------------------------------------

Let an integer $k$, equal or greater than 1, and let $\{ \Omega_h \}_h$ denote a family of meshes, made of general simple polygons, on $\Omega$.
Given an element $E \in \Omega_h$ of diameter $h_E$ and area $|E|$, its boundary $\partial E$ is subdivided into $N=N(E)$ straight segments, which are called \emph{edges}, with a little abuse of terminology. Accordingly, the endpoints of the edges are called \emph{vertices} of the element $E$.
We remark that several consecutive edges of $E$ may be collinear; as a consequence, the number of edges (and vertices) may be greater than the number of maximal straight segments of $\partial E$. Hence, a triangle may have ten edges, for instance.
Furthermore, the length of an edge $e \in \partial E$ is denoted by $h_e$.
Moreover, in the sequel we assume that the diffusion tensor $K$ is piecewise constant
with respect to the meshes $\{ \Omega_h \}_h$.

For each $E \in \Omega_h$ we now introduce the local virtual space
$$
V_E = \big\{ v \in H^1(E) \cap C^0(E)\: : \: - \Delta v \in \Pp_{k-2}(E) \: , \ v|_e \in \Pp_k(e) \ \forall e \in \partial E \big\} ,
$$
where $\Pp_n$, $n \in {\mathbb N}$, denotes the polynomial space of degree $n$, $n \in {\mathbb N}$, with the convention that $\Pp_{-1} = \{ 0 \}$.
The associated set of local degrees of freedom $\Xi$ (divided into boundary ones $\Xi^\partial$, and internal ones $\Xi^\circ$) are given by
\begin{itemize}
\item point values at the vertexes of $E$;
\item for each edge, point values at $(k-1)$ distinct points on the edge (this are typically taken as Gauss-Lobatto nodes, see \cite{volley,hitchhikers});
\item the internal moments against a scaled polynomial basis $\{ m_i \}_{i=1}^{k(k-1)/2}$ of $\Pp_{k-2}(E)$
\begin{equation}\label{mom-dof}
\Xi_i^\circ(v) = |E|^{-1} \int_E v \: m_i \ , \quad \textrm{span}\{m_i\}_{i=1}^{k(k-1)/2} \!\! = \mathbb{P}_{k-2}(E) , \ \ || m_i ||_{L^\infty(E)} \simeq 1 .
\end{equation}
\end{itemize}
For future reference, we collect all the $Nk$ boundary degrees of freedom (the first two items above) and denote them with $\{\Xi_i^\partial\}_{i=1}^{Nk}$.

The global space $V_h \in H^1_0(\Omega)$ (such that $V_{h|E}=V_E$) is obtained by gluing the above spaces, and the same holds for the global degrees of freedom. We refer to \cite{volley} for the explicit expression.
On each element $E$ we also define a projector $\Pi^\nabla_E: V_E \rightarrow \Pp_k(E)$, orthogonal with respect to the bilinear form $a_E(\cdot,\cdot)$. More explicitly, for all $v \in V_E$:
\begin{equation}\label{proj}
\left\{
\begin{aligned}
& \Pi^\nabla_E v \in \Pp_k(E) \\
& a_E(v - \Pi^\nabla_E v, p) = 0 \quad \forall p \in \Pp_k(E) \\
& \PR(v - \Pi^\nabla_E v) = 0
% & \int_E (v - \Pi^\nabla_E v) = 0 ,
\end{aligned}
\right.
\end{equation}
where $\PR$ denotes any projection operator onto the space $\Pp_0(E)$.
{
% \begin{remark}\label{rem:zeroav}
In the literature one can find various choices for the operator $\PR$. The following three choices, which we focus on in the sequel, are among the most popular.

\begin{enumerate}

\item  A typical choice, that can be used for $k \ge 2$, is given by the average on the element $E$:
\begin{equation}\label{eq:Rint}
\PR v = |E|^{-1} \int_E v .
\end{equation}

\item An alternative, valid for any $k$, is to take the average on the boundary:
\begin{equation}\label{eq:Rbnd}
\PR v = |\partial E|^{-1} \int_{\partial E} v
\end{equation}

\item A third choice, again valid for any $k$, is the average of the vertex values:
\begin{equation}\label{eq:Rvarg}
\PR v = \frac{1}{N}\sum_{i=1}^N v(p_i),
\end{equation}
where the $p_i$'s denote the vertices of $E$.

\end{enumerate}
}

It is easy to check that the above projector $\Pi^\nabla_E$ is computable on the basis of the available degrees of freedom (see \cite{volley}).
Moreover, we introduce the following symmetric and positive semi-definite stability bilinear form on $V_E \times V_E$
\begin{equation}\label{st-term}
s_E(v,w) = s_E^\partial(v,w) + s_E^\circ(v,w) .
\end{equation}
Equation \eqref{st-term} highlights that $s_E$ is the sum of two contributions: the first, $s_E^\partial$, involving the boundary degrees of freedom; the second, $s_e^\circ$, involving the internal degrees of freedom. For instance, the \emph{standard choice} corresponds to:
\begin{equation} \label{stab-int-bound}
\begin{aligned}
& s_E^\partial(v,w)= \sum_{i=1}^{Nk} \Xi_i^\partial(v) \Xi_i^\partial(w)  \quad \textrm{(part involving the boundary DoFs)}, \\
& s_E^\circ(v,w)    =  \sum_{i=1}^{k(k-1)/2} \Xi_i^\circ(v) \Xi_i^\circ(w)  \quad \textrm{(part involving the internal DoFs)}.
\end{aligned}
\end{equation}

\begin{remark}\label{rem:interest}
Part of the interest of this paper is also the possibility to substitute $s_E^\partial(v,w)$ by some other option (non-standard choices). As an example, we will consider (cf. \cite{wriggers}):
\begin{equation}\label{optionalS}
s_E^\partial(v,w) = h_E \int_{\partial E} \partial_s v \: \partial_s w ,
\end{equation}
where $\partial_s$ denotes the tangent derivative along the edges.

Another interesting point considered in this paper, is the possibility to completely neglect
the internal part of the stability form. In other words, we will show that the choice
\begin{equation}\label{no-int-stab}
s_E^\circ(v,w)    =  0 ,
\end{equation}
does not spoil the stability feature of the numerical scheme.
\end{remark}

Given any symmetric and coercive form $s_E(\cdot,\cdot)$, one can define the local discrete bilinear forms on $V_E \times V_E$
\begin{equation}\label{ah-form}
a_E^h(v,w) = a_E(\Pi^\nabla_E v,\Pi^\nabla_E w) + s_E((I-\Pi^\nabla_E) v, (I-\Pi^\nabla_E)w) ,
\end{equation}
that are computable and approximate $a_E(\cdot,\cdot)$. Given the global discrete form
\begin{equation}\label{dec_ah}
a^h(v,w) = \sum_{E \in \Omega_h} a_E^h(v_{|E}, w_{|E}) \quad \forall v,w \in V_h ,
\end{equation}
the discrete problem is:
\begin{equation}\label{discr-pbl}
\left\{
\begin{aligned}
& \textrm{Find } u_h \in V_h \\
& a^h(u_h,v_h) = <f_h,v_h> \quad \forall v_h \in V_h .
\end{aligned}
\right.
\end{equation}
For a discussion about the approximated loading term $<f_h,v_h>$, we refer to \cite{volley,projectors}.

% \bigskip

In order to shorten the notation, and also to underline the generality of the proposed approach, in the following we will simply use $\Pi_E$ instead of $\Pi^\nabla_E$ to denote the projector operator.

The following assumptions on the mesh will be considered in the present work.
\begin{itemize}
\item[A1)] It exists $\gamma \in \mathbb{R}^+$ such that all elements $E$ of the mesh family $\{\Omega_h\}_h$ are star-shaped with respect to a ball $B_E$ of radius $\rho_E \ge \gamma h_E$ and center ${\bf x}_E$.
\item[A2)] It exists $C \in \mathbb{N}$ such that $N(E) \le C$ for all elements $E \in \{\Omega_h\}_h$.
\item[A3)] It exists $\eta \in \mathbb{R}^+$ such that for all elements $E$ of the mesh family $\{\Omega_h\}_h$ and all edges $e\in\partial E$ it holds $h_e \ge \eta h_E$.
\end{itemize}
In all the presented results we will explicitly write which of those hypotheses are used, if any.

We remark that assumptions A1 and A3 are those considered in \cite{volley}. Here, we want to consider also weaker assumptions in terms of the edge requirements, namely the combination of A1 and A2.
It is easy to check that, provided A1 holds, assumption A3 implies A2. However, assumption A2 is much weaker than A3, as it allows for edges arbitrarily small with respect to the element diameter.

In the following the symbol $\lesssim$ will denote a bound up to a constant that is uniform for all $E \in \{\Omega_h\}_h$ (but may depend on the polynomial degree $k$). Moreover, in order to make the notation shorter, for any non negative real $s$ we will denote by
$$
\| v \|_{s,\omega} = \| v \|_{H^s(\omega)} \ , \qquad | v |_{s,\omega} = | v |_{H^s(\omega)}
$$
the standard $H^s$ Sobolev (semi)norm on the measurable open set $\omega$.

%%
%Under assumption A1, we can build a sub-triangulation of each element $E \in \Omega_h$ by connecting each vertex of $E$ with the center of the associated ball. We denote by ${\cal T}_h$ the collection of all such triangles for a given mesh $\Omega_h$ in our family.
%It is easy to check that:
%\begin{itemize}
%\item[(i)] under assumption A1 all the triangles in the family $\{ {\cal T}_h \}_h$ have angles that are \emph{uniformly} bounded away from $\pi$;
%\end{itemize}

\begin{remark}\label{tau}
The stability form $s_E(\cdot,\cdot)$ may also be scaled by a multiplicative factor $\tau_E > 0$, to take into account the magnitude of the material parameter $K$, for instance. With this respect, a possible choice could be to set $\tau_E$ as the trace of $K$ on each element. In this paper, we do not investigate on how to select $\tau_E$, but we address the reader to \cite{VEM-elasticity,Paulino-VEM,BeiraoLovaMora} for some study on such an issue.
\end{remark}

% -------------------------------------------------------------------------------
\section{Preliminary results}\label{sec:techres}
% -------------------------------------------------------------------------------

In this section we present some technical results that will be needed in the sequel of the paper.

The $H^{1/2}$ boundary norm will have an important role in the following. We here use the (one-dimensional) double integral definition
\begin{equation}\label{H12def}
|v|_{1/2,\partial E}^2 := \int_{\partial E} \int_{\partial E} \left( \frac{v(s_1)-v(s_2)}{s_1-s_2} \right)^2 ds_1 ds_2 ,
\end{equation}
where, with a small abuse of notation, $v$ stands for $v|_{\partial E}$, and where $s_1,s_2$ denote curvilinear abscissae along the boundary.

% We start by showing some preliminary lemmas.
%%
We begin with the following Lemmas.
\begin{lem}\label{lem:tr:um}
Let assumption A1 hold. Then
\begin{equation}\label{B2}
|v|_{1/2,\partial E} \lesssim | v |_{1,E} \qquad \forall v \in H^1(E) , \ E \in \Omega_h.
\end{equation}
Moreover, for all $E \in \Omega_h$ and all $v \in H^{1/2}(\partial E)$, there exists an extension $\widetilde v \in H^1(E)$ such that
\begin{equation}\label{B2bis}
| \widetilde v |_{1,E} \lesssim |v|_{1/2,\partial E} .
\end{equation}
\end{lem}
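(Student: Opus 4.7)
My approach is to scale away the factor $h_E$ and then exploit A1 to obtain a uniform Lipschitz character for the rescaled element, so as to invoke the classical trace and extension theorems on Lipschitz domains. The first step is a scaling argument. Setting $\hat x = x/h_E$ and $\hat v(\hat x) = v(h_E \hat x)$, the change-of-variable formula gives $|v|_{1,E} = |\hat v|_{1,\hat E}$; and using the defining double integral \eqref{H12def} one also obtains $|v|_{1/2,\partial E} = |\hat v|_{1/2,\partial \hat E}$, because the Jacobian $h_E^2$ from $\dr s_1 \dr s_2$ cancels the $h_E^2$ appearing at the denominator. The same rescaling applies to \eqref{B2bis}, since the extension operation commutes with the dilation. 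Hence it suffices to prove both bounds under the normalization $h_E = 1$.

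On the rescaled element $\hat E$, A1 says that $\hat E$ is star-shaped with respect to a ball of radius at least $\gamma$ centered at some $\hat {\bf x}$, and has diameter $1$. A classical consequence is that, in polar coordinates centered at $\hat {\bf x}$, the boundary $\partial \hat E$ is the graph of a function $r = r(\theta)$ with $\gamma \le r(\theta) \le 1$ and Lipschitz in $\theta$ with Lipschitz constant depending only on $\gamma$. The radial map $\Phi : \hat E \to D$ defined by $\Phi(\rho e^{i\theta}) := (\rho/r(\theta))\, e^{i\theta}$ is therefore bi-Lipschitz from $\hat E$ onto the unit disk $D$, with bi-Lipschitz constants depending only on $\gamma$.

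To conclude, one transports the two estimates from $D$ to $\hat E$. On $D$ the inequality $|v|_{1/2,\partial D} \lesssim |v|_{1,D}$ and the existence of a bounded right inverse of the trace (e.g.\ via harmonic extension) are standard. Under a bi-Lipschitz change of variables, the $H^1$ seminorm on the domain and the $H^{1/2}$ seminorm on the boundary (in the double-integral form \eqref{H12def}) are equivalent up to constants depending only on the bi-Lipschitz constants, hence here only on $\gamma$. Pulling back via $\Phi$ yields the two estimates on $\hat E$, and undoing the initial scaling recovers \eqref{B2} and \eqref{B2bis}. The delicate point, and the reason why A1 alone is sufficient (without A2 or A3), is that the hidden constant must depend only on $\gamma$, and in particular be independent of the number of edges of $\partial E$ and of their lengths: this is built into the polar parameterization above, which treats $\partial \hat E$ as a single Lipschitz curve and never refers to its edge decomposition.
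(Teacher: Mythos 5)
Your proof is correct and follows essentially the same route as the paper: a radial map from the element onto a disk, shown to be uniformly bi-Lipschitz (equivalently, $W^{1,\infty}$ in both directions) thanks to the star-shapedness assumption A1, followed by a pull-back/push-forward of the classical trace and harmonic-extension results on the disk. The only cosmetic difference is that you first rescale to unit diameter and map to the unit disk, whereas the paper maps directly onto the inscribed ball $B_E$ (whose radius is comparable to $h_E$ by A1), which absorbs the scaling into the same map.
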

\begin{proof}
We only sketch the simple proof, based on a mapping argument. Up to a translation of the element $E$, we may assume that the ball center ${\bf x}_E$ is the origin of the coordinate axes. Let then the function $\Psi : [0, 2\pi) \rightarrow [\rho_E, h_E]$ describe the boundary of $E$, as follows. The boundary curve $\Gamma=\partial E$ can be parametrized in a unique way as
\begin{equation}
\gamma(\theta) = \big( \Psi (\theta) \cos{(\theta)} , \Psi (\theta) \sin{(\theta)} \big) \ , \quad \theta \in [0, 2\pi) ,
\end{equation}
with $\theta$ representing the angle in radial coordinates. Note that property A1 implies $\Psi\in W^{1,\infty}[0, 2\pi)$, uniformly with respect to $E \in \Omega_h$.
We then introduce the radial mapping $F : \overline{B}_E \rightarrow \overline{E}$, associating a point expressed in polar coordinates
$$
\big( \hat x , \hat y \big) = \big( \hat r \cos{(\hat\theta)}, \hat r \sin{(\hat\theta)} \big) \ , \quad \hat r \in [0,\rho_E], \ \hat \theta \in [0, 2\pi) ,
$$
with the point $(x,y) = F(\hat x, \hat y)$, whose coordinates are
$$
\big( x , y \big)  = \big( r \cos{(\theta)}, r \sin{(\theta)} \big) \ , \quad r = \hat r \frac{\Psi(\hat\theta)}{\rho_E} , \ \theta=\hat\theta.
$$
By recalling A1, it can be checked that  $F\in W^{1,\infty}(B_E)$, and the same holds for the inverse mapping, i.e. $F^{-1} \in W^{1,\infty}(E)$. It is easy to see that $|F|_{1,\infty,B_E}\lesssim C$ and
$|F^{-1}|_{1,\infty,E}\lesssim C$.
%The associated norms are moreover uniformly bounded with respect to $E \in \Omega_h$.
%%
As a consequence, bound \eqref{B2} can be simply proved by a standard ``pull-back and push-forward'' argument: i) map $v \in H^1(E)$ from $E$ into $B_E$ using $F$; ii) notice that the trace bound analogous to \eqref{B2} holds on the ball $B_E$; iii) map back to $E$ using $F^{-1}$. Bound \eqref{B2bis} is similarly proved: one only needs to map the boundary data into $B_E$, to consider the harmonic extension inside $B_E$, and finally to map back to $E$.
\end{proof}

\begin{cor}\label{Hdiv-trace}
Let assumption A1 hold. Then
\begin{equation}\label{B2ter}
| {\bf w}\cdot {\bf n}_E |_{H^{-1/2}(\partial E)} \lesssim \| {\bf w} \|_{0,E} \qquad
\forall {\bf w} \in [L^2(E)]^2 \textrm{ with } \textrm{div}\, {\bf w}=0, \ \forall E \in \Omega_h ,
\end{equation}
with ${\bf n}_E$ denoting the outward unit normal to the boundary of $E$.
% and where as usual the squared $H_{div}$ norm is defined as the sum of the squared $L^2$ norms of the function and its divergence.
\end{cor}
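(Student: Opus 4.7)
The plan is to prove the bound by duality, leveraging the second part of Lemma \ref{lem:tr:um} (the extension estimate \eqref{B2bis}) in combination with integration by parts and the hypothesis $\textrm{div}\,{\bf w}=0$.

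First I would recall that the $H^{-1/2}$ seminorm on $\partial E$ is defined by duality with the $H^{1/2}$ seminorm introduced in \eqref{H12def}, i.e.
$$
|\phi|_{H^{-1/2}(\partial E)} \;=\; \sup_{\mu\in H^{1/2}(\partial E),\; |\mu|_{1/2,\partial E}\neq 0}\frac{\langle \phi,\mu\rangle_{\partial E}}{|\mu|_{1/2,\partial E}},
$$
where the pairing is the duality between $H^{-1/2}(\partial E)$ and $H^{1/2}(\partial E)$. For ${\bf w}\in L^2(E)^2$ with $\textrm{div}\,{\bf w}=0$, the normal trace ${\bf w}\cdot{\bf n}_E$ is well defined as an element of $H^{-1/2}(\partial E)$ via the usual Green's identity, and testing against the constant $\mu\equiv 1$ gives $\langle {\bf w}\cdot{\bf n}_E,1\rangle_{\partial E}=\int_E\textrm{div}\,{\bf w}=0$. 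Hence in the supremum above we may, without loss of generality, restrict to test functions $\mu\in H^{1/2}(\partial E)$ with zero mean on $\partial E$.

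Given such a $\mu$, I would apply \eqref{B2bis} of Lemma \ref{lem:tr:um} to produce an extension $\widetilde\mu\in H^1(E)$ with
$$
|\widetilde\mu|_{1,E} \;\lesssim\; |\mu|_{1/2,\partial E}.
$$
Then Green's formula combined with $\textrm{div}\,{\bf w}=0$ yields
$$
\langle {\bf w}\cdot{\bf n}_E,\mu\rangle_{\partial E}
\;=\;\int_E {\bf w}\cdot\nabla\widetilde\mu\,\dx+\int_E (\textrm{div}\,{\bf w})\,\widetilde\mu\,\dx
\;=\;\int_E {\bf w}\cdot\nabla\widetilde\mu\,\dx.
$$
A Cauchy--Schwarz inequality followed by the extension estimate gives
$$
\bigl|\langle {\bf w}\cdot{\bf n}_E,\mu\rangle_{\partial E}\bigr|
\;\leq\;\|{\bf w}\|_{0,E}\,|\widetilde\mu|_{1,E}
\;\lesssim\;\|{\bf w}\|_{0,E}\,|\mu|_{1/2,\partial E}.
$$
Taking the supremum over admissible $\mu$ delivers \eqref{B2ter}.

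The only delicate point is the justification that the duality pairing can be reduced to mean-zero test functions $\mu$: this is what allows the use of the seminorm $|\mu|_{1/2,\partial E}$ (rather than the full $H^{1/2}$ norm) in the extension estimate \eqref{B2bis}, and it hinges precisely on the divergence-free assumption through Stokes' theorem. Beyond this, the argument is a routine duality computation; assumption A1 enters only indirectly, through its use in Lemma \ref{lem:tr:um}.
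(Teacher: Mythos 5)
Your proof is correct and follows essentially the same route as the paper's: express the $H^{-1/2}$ seminorm by duality, replace the boundary test function by its $H^1(E)$ extension via \eqref{B2bis}, integrate by parts using $\textrm{div}\,{\bf w}=0$, and conclude with Cauchy--Schwarz. The only difference is that you make explicit the reduction to mean-zero test functions (needed because the denominator is the $H^{1/2}$ \emph{semi}norm), a point the paper leaves implicit; this is a welcome clarification rather than a deviation.
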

\begin{proof}
By the definition of dual norm and using \eqref{B2bis}, we get
$$
\begin{aligned}
| {\bf w}\cdot {\bf n}_E |_{H^{-1/2}(\partial E)} & = \sup_{v \in H^{1/2}(\partial E)}
\frac{{}_{-1/2,\partial E}< {\bf w}\cdot {\bf n}_E , v >_{1/2,\partial E}}{|v|_{1/2,\partial E}} \\
& \lesssim \sup_{\widetilde v \in H^{1}(E)}
\frac{{}_{-1/2,\partial E}< {\bf w}\cdot {\bf n}_E , \widetilde v >_{1/2,\partial E}}{|\widetilde v|_{1,E}} .
\end{aligned}
$$
An integration by parts, using $\textrm{div}\,{\bf w}=0$, and the Cauchy-Schwarz inequality
lead to estimate \eqref{B2ter}:
$$
| {\bf w}\cdot {\bf n}_E |_{H^{-1/2}(\partial E)} \lesssim \sup_{\widetilde v \in H^{1}(E)}
\frac{\int_E {\bf w} \cdot \nabla \widetilde v}{|\widetilde v|_{1,E}}
\le \|  {\bf w} \|_{0,E} .
$$

\end{proof}

\begin{lem}\label{lem:lapl} Let assumption A1 hold true. Then we have:
\begin{equation}\label{eq:lapl}
\| \Delta v_h \|_{0,E} \lesssim h_E^{-1} | v_h |_{1,E} \quad \forall v_h \in V_E .
\end{equation}
\end{lem}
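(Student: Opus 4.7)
The key observation is that, by the definition of $V_E$, the function $q := \Delta v_h$ belongs to $\mathbb{P}_{k-2}(E)$. The plan is to combine a bubble-function test with an integration by parts against $v_h$, and close via the standard polynomial inverse inequality.

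Using A1, I construct a bubble $b_E \in H^1_0(E)$ supported in the inscribed ball $B_E$, satisfying $0 \leq b_E \leq 1$, $b_E \gtrsim 1$ on the concentric ball of radius $\rho_E/2$, and $\|\nabla b_E\|_{L^\infty(E)} \lesssim h_E^{-1}$. By the equivalence of norms on the finite-dimensional polynomial space (made uniform in $E$ through a pull-back to the reference ball, in the spirit of Lemma \ref{lem:tr:um}), one obtains
$$\|q\|_{0,E}^2 \lesssim \int_E b_E\, q^2 = \int_E (b_E q)\, \Delta v_h .$$

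Since $b_E q \in H^1_0(E)$, an integration by parts then yields
$$\int_E (b_E q)\, \Delta v_h = -\int_E \nabla(b_E q)\cdot\nabla v_h \leq \|\nabla(b_E q)\|_{0,E}\,|v_h|_{1,E}.$$
Expanding $\nabla(b_E q) = q\,\nabla b_E + b_E\, \nabla q$ and using $\|b_E\|_{L^\infty(E)}\lesssim 1$, $\|\nabla b_E\|_{L^\infty(E)}\lesssim h_E^{-1}$, together with the standard polynomial inverse inequality $\|\nabla q\|_{0,E} \lesssim h_E^{-1}\|q\|_{0,E}$ (valid under A1), one arrives at $\|\nabla(b_E q)\|_{0,E} \lesssim h_E^{-1}\|q\|_{0,E}$. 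Combining the three estimates and dividing by $\|q\|_{0,E}$ gives the claim.

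The main technical point is the uniformity in $E$ of the two polynomial estimates invoked above: the equivalence of $\|q\|_{0,E}^2$ with the bubble-weighted integral $\int_E b_E q^2$, and the inverse inequality for $\|\nabla q\|_{0,E}$. Both reduce, via a pull-back to a fixed reference ball, to standard facts for polynomials of bounded degree, with scaling constants controlled by the shape-regularity parameter $\gamma$ of A1. Note in particular that assumption A3 is not needed.
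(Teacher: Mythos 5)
Your proof is correct and follows essentially the same route as the paper's: localize to a shape-regular subdomain guaranteed by A1, use a bubble-weighted norm equivalence for the polynomial $\Delta v_h$, integrate by parts against $v_h$, and close with a scaled inverse estimate. The only (cosmetic) difference is that you place the bubble on the inscribed ball $B_E$ directly, whereas the paper uses the standard cubic bubble on an equilateral triangle inscribed in $B_E$; both variants rely on the same uniform polynomial norm equivalence under A1, and neither needs A3.
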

\begin{proof}
For $E \in \Omega_h$, let $T_E\subset E$ denote an equilateral triangle inscribed in the ball $B_E$. We start observing that, due to assumption A1, for any polynomial $p$ of given maximum degree it holds $ \| p \|_{0,E} \lesssim \| p \|_{0,T_E}$. This follows from noting that the smallest ball containing $E$ and the largest ball contained in $T_E$ have uniformly comparable radii.
We now recall that $\Delta v_h \in \Pp_{k-2}$. Let $b \in \Pp_3(T_E)$ denote the standard cubic bubble in $T_E$ with unitary maximum value. Standard properties and inverse estimates of polynomial spaces on shape regular triangles yield
$$
\begin{aligned}
\| \Delta v_h \|_{0,E}^2 & \lesssim \| \Delta v_h \|_{0,T_E}^2 \lesssim \int_{T_E} b \Delta v_h \Delta v_h  =
\int_{T_E} \nabla v_h \cdot \nabla (b \Delta v_h) \\
& \lesssim    | v_h |_{1,T_E}  h_E^{-1} \| b \Delta v_h \|_{0,T_E}
\le h_E^{-1}  | v_h |_{1,T_E}  \| \Delta v_h \|_{0,T_E} .
\end{aligned}
$$
Estimate \eqref{eq:lapl} now follows by observing $\| \Delta v_h \|_{0,T_E} \le \| \Delta v_h \|_{0,E}$.
\end{proof}

\begin{remark}\label{rm:vem-invest}
The same argument in the proof of Lemma \ref{lem:lapl} can be used to prove inverse estimates for polynomials of fixed maximum degree, on polygons satisfying assumption A1.
\end{remark}

The next Lemma can be considered as a variant of Lemma 3.1 in \cite{bertoluzza}, supposing
that the number of edges is uniformly bounded.

\begin{lem}\label{lem:A2}
Let A1 and A2 hold. For all $E \in \Omega_h$ and all $v_h \in V_E$ we have
$$
\| v_h \|_{L^{\infty}(\partial E)}^2 \lesssim \: ( h_E^{-1} \| v_h \|_{0,\partial E}^2 + |v_h|_{1/2,\partial E}^2) .
$$
\end{lem}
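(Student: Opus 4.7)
The plan is to dominate $|v_h(x)|$ pointwise by two contributions: a ``mean-value'' piece, controlled by $\|v_h\|_{0,\partial E}$, plus a ``fluctuation'' piece, controlled by $|v_h|_{1/2,\partial E}$. Since every $v_h\in V_E$ is continuous on $\overline E$, the intermediate value theorem produces a point $y_0\in\partial E$ with $v_h(y_0)=\bar v_h := |\partial E|^{-1}\int_{\partial E} v_h\,\ds$. Assumption A1 yields $|\partial E|\gtrsim h_E$ (the boundary encloses a ball of radius comparable to $h_E$, so by the isoperimetric inequality its length is at least $2\pi\rho_E$), and Cauchy--Schwarz then gives
\[
|v_h(y_0)|^2 \;=\; |\bar v_h|^2 \;\le\; |\partial E|^{-1}\|v_h\|_{0,\partial E}^2 \;\lesssim\; h_E^{-1}\|v_h\|_{0,\partial E}^2.
\]

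For an arbitrary $x\in\partial E$, I traverse one of the two arcs of $\partial E$ connecting $x$ to $y_0$, producing a chain $x=z_0, z_1, \ldots, z_m=y_0$ such that each consecutive pair $(z_i, z_{i+1})$ lies within a single edge $e_i$ of $E$. Assumption A2 ensures $m\le N(E)+1 \le C+1$. Since $v_h|_{e_i}\in\Pp_k(e_i)$, the Fundamental Theorem of Calculus and Cauchy--Schwarz give
\[
|v_h(z_i)-v_h(z_{i+1})| \;\le\; h_{e_i}^{1/2}\,|v_h|_{1,e_i}.
\]

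The crucial technical input is the polynomial scaling equivalence
\[
|p|_{1/2,e}^2 \;\simeq\; h_e\,|p|_{1,e}^2 \qquad \forall\, p\in\Pp_k(e),
\]
obtained by mapping to the reference interval $[0,1]$: the $H^{1/2}$ seminorm is scale--invariant while the squared $H^1$ seminorm scales as $h_e^{-1}$, and on the finite--dimensional quotient $\Pp_k([0,1])/\R$ the two seminorms are equivalent with a constant depending only on $k$. Hence $h_{e_i}^{1/2}|v_h|_{1,e_i} \lesssim |v_h|_{1/2,e_i} \le |v_h|_{1/2,\partial E}$, and summing the at most $C+1$ contributions along the chain produces $|v_h(x)-v_h(y_0)| \lesssim |v_h|_{1/2,\partial E}$.

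The conclusion then follows from the triangle inequality $|v_h(x)|\le |v_h(y_0)| + |v_h(x)-v_h(y_0)|$, squaring, and taking the supremum over $x\in\partial E$. The main obstacle is the polynomial scaling $|p|_{1/2,e}^2 \simeq h_e|p|_{1,e}^2$: in the continuous setting $H^{1/2}$ on a 1D curve fails to embed into $L^\infty$, so the argument really hinges on $v_h$ restricting to a polynomial of bounded degree on every edge. Assumption A2 is also essential, as it keeps the number of ``hops'' in the chain uniformly bounded; without it, the sum over edges could degenerate.
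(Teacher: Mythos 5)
Your proof is correct and follows essentially the same route as the paper: both arguments split $v_h$ into its boundary average (controlled by $h_E^{-1}\|v_h\|_{0,\partial E}^2$ via $|\partial E|\gtrsim h_E$) plus a fluctuation whose oscillation along $\partial E$ is bounded edge-by-edge through a scale-invariant inverse estimate for polynomials of degree $\le k$, with A2 controlling the number of summands. The only cosmetic difference is that you route the edge estimate through $h_e^{1/2}|v_h|_{1,e}\lesssim |v_h|_{1/2,e}$ and a chaining argument, whereas the paper uses the (weaker, implied) bound $\|\partial_s v_h\|_{L^1(e)}\lesssim |v_h|_{1/2,e}$ and the fact that the zero-average part vanishes somewhere on $\partial E$.
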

\begin{proof}
For $w_h \in V_E$, we recall that $w_{h|\partial E}\in C^0(\partial E)$ and $w_{h|\partial E}$ is a polynomial of degree at most $k$ on each edge.

In addition, we first suppose that $\int_{\partial E} w_h = 0$. Then, by definition \eqref{H12def} and by using a scaling argument on each edge of the mesh, we obtain
$$
|w_h|_{1/2,\partial E}^2 \ge \sum_{e \in \partial E} |w_h|_{1/2,e}^2 \gtrsim \sum_{e \in \partial E} \| \frac{\partial w_h}{\partial s}\|_{L^1(e)}^2 ,
$$
where $s$ denotes the curvilinear abscissae along the generic edge. By assumption A2 and recalling that $w_h$ is continuous on the boundary, from the above bound we have:
\begin{equation}\label{zeroav}
|w_h|_{1/2,\partial E}^2 \gtrsim \Big( \sum_{e \in \partial E} \| \frac{\partial w_h}{\partial s}\|_{L^1(e)} \Big)^2
= \| \frac{\partial w_h}{\partial s}\|_{L^1(\partial E)}^2 \ge \| w_h \|_{L^\infty(\partial E)}^2 ,
\end{equation}
where we also used that $w_h|_{\partial E}$ has zero average and thus it vanishes at least at one point of $\partial E$.
For a generic $v_h \in V_E$ (not necessarily with vanishing mean value), the proof follows easily from \eqref{zeroav} by adding and subtracting its average on the boundary $\overline{v}_h$ and simple bounds:
$$
\begin{aligned}
\| v_h \|_{L^\infty(\partial E)} & \le \| v_h-\overline{v}_h \|_{L^\infty(\partial E)} + \| \overline{v}_h \|_{L^\infty(\partial E)}
\lesssim | v_h-\overline{v}_h |_{1/2,\partial E} + |\partial E|^{-1/2} \| \overline{v}_h \|_{0, \partial E}  \\
& \lesssim | v_h |_{1/2,\partial E} + |\partial E|^{-1/2} \| v_h \|_{0,\partial E}
\lesssim | v_h |_{1/2,\partial E} + h_E^{-1/2} \| v_h \|_{0,\partial E} ,
\end{aligned}
$$
where $|\partial E|$ denotes the length of $\partial E$.
\end{proof}

The following approximation result is an extension of the one in \cite{VemSteklov} to the case of higher order norms and more general mesh assumptions.
\begin{thm}\label{thm:approx}
Let assumption A1 hold. Then there exists a real number $ \overline\sigma > 3/2 $ such that for all $u \in H^{s}(\Omega)$,
$1< s \le k+1 $, and all $1 \le \sigma < \min \{\overline\sigma, s\}$, it holds:
\begin{equation}\label{eq:chile-bis}
| u - u_I |_{\sigma,E} \lesssim h_E^{s-\sigma} |u|_{s,E} \ , \quad E \in \Omega_h,
% \ 1 \le \sigma < s \le k+1 ,
\end{equation}
where $u_I$ is the degrees-of-freedom interpolant of $u$ in $V_h$.
\end{thm}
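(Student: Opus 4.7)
The argument will be a two-step reduction: first approximate $u$ by a polynomial on $E$, then estimate the virtual interpolant of the polynomial error by exploiting the special structure of functions in $V_E$.

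First, using assumption A1, the Bramble--Hilbert/Dupont--Scott theory on star-shaped domains yields a polynomial $u_\pi\in \Pp_k(E)$ satisfying
\begin{equation}
|u-u_\pi|_{\tau,E}\lesssim h_E^{s-\tau}|u|_{s,E},\qquad 0\le \tau\le s.
\end{equation}
Since $\Pp_k(E)\subset V_E$ and the dof-interpolant is the identity on $V_E$, we have $(u_\pi)_I=u_\pi$, and so, by linearity,
\begin{equation}
u-u_I=(u-u_\pi)-(u-u_\pi)_I.
\end{equation}
Setting $w:=u-u_\pi$, the first addend is controlled by the Bramble--Hilbert estimate, and the theorem will follow once we establish
\begin{equation}
|w_I|_{\sigma,E}\lesssim h_E^{-\sigma}\|w\|_{0,E}+ h_E^{1-\sigma}|w|_{1,E}+ h_E^{s-\sigma}|w|_{s,E},
\end{equation}
with Step 1 absorbing the right-hand side terms into $h_E^{s-\sigma}|u|_{s,E}$.

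To prove the bound on $w_I$, I split it using its defining elliptic problem on $E$: write $w_I=w_I^b+w_I^0$, where $w_I^b$ is the harmonic function in $E$ with boundary datum $w_I|_{\partial E}$, and $w_I^0\in H^1_0(E)$ with $-\Delta w_I^0=-\Delta w_I\in \Pp_{k-2}(E)$. For the interior part, the Laplacian of $w_I$ is a polynomial of degree $k-2$ whose dofs in $\Pp_{k-2}(E)$ are determined, up to a polynomial-scaling constant, by the internal moments $\Xi_i^\circ(w)=|E|^{-1}\int_E w\,m_i$; since $\|m_i\|_\infty\simeq 1$, this gives $\|\Delta w_I\|_{0,E}\lesssim h_E^{-1}\|w\|_{0,E}$ (after squaring and using a finite-dimensional norm equivalence on the scaled polynomial basis on the star-shaped $E$, as in Remark \ref{rm:vem-invest}). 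Elliptic regularity on $E$ applied to the homogeneous-boundary problem for $w_I^0$ then yields $|w_I^0|_{\sigma,E}\lesssim h_E^{2-\sigma}\|\Delta w_I^0\|_{0,E}$ for $\sigma<\overline\sigma$, where $\overline\sigma>3/2$ is the shift-regularity exponent for the Laplacian on polygons satisfying A1 (star-shapedness with respect to a ball bounds all interior angles strictly away from $2\pi$). For the boundary part, $w_I^b|_{\partial E}$ is the piecewise polynomial of degree $k$ that interpolates $w$ at vertices and Gauss--Lobatto edge nodes; standard 1D polynomial interpolation estimates on each edge (combined with a trace inequality and a scaling argument to control the $H^{\sigma-1/2}$ norm on $\partial E$ in terms of $H^s$ information on $E$) provide the needed bound, and elliptic regularity again transfers it to $|w_I^b|_{\sigma,E}$ up to the same threshold $\overline\sigma$.

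The main obstacle is Step 3: identifying the precise exponent $\overline\sigma>3/2$ and showing that the elliptic-regularity constant on $E$ is uniform with respect to $E\in\Omega_h$. This requires one to observe that, under A1 alone, the interior angles of $E$ are uniformly bounded away from $2\pi$ by a quantity depending only on $\gamma$, so that the classical Kondratiev/Grisvard shift theorem on polygons gives a uniform $\overline\sigma>3/2$. A secondary technical point is the edge-interpolation estimate when many short edges are allowed (no assumption A2/A3 here): one must work edge-by-edge at the correct scale and then sum, noting that $w=u-u_\pi$ has the full Bramble--Hilbert decay, so that the sum over edges telescopes to volume-type norms of $w$ without losing any powers of $h_E$.
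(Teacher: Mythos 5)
Your overall strategy differs from the paper's: you attempt to prove stability of the actual degrees-of-freedom interpolation operator applied to $w=u-u_\pi$, whereas the paper never estimates the dof-interpolant at all. It builds a specific element of $V_h$ whose boundary trace is the Lagrange interpolant $u_r$ of $u$ on an auxiliary sub-triangulation of $E$ (obtained by joining the vertices to ${\bf x}_E$) and whose Laplacian equals $\Delta u_\pi$; then $u_I-u_\pi$ is harmonic, the Grisvard shift estimate gives $|u_I-u_\pi|_{\sigma,E}\lesssim|u_r-u_\pi|_{\sigma-1/2,\partial E}$, and a trace inequality on $E$ reduces everything to the two-dimensional anisotropic interpolation estimates of Apel on the (possibly thin) sub-triangles. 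Your identification of $\overline\sigma>3/2$ via the uniform bound on interior angles under A1 is exactly the paper's argument, and the algebraic reduction $u-u_I=w-w_I$ with $(u_\pi)_I=u_\pi$ is sound.

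However, two steps of your plan have genuine gaps. First, $\Delta w_I$ is \emph{not} determined by the internal moments $\Xi_i^\circ(w)$ alone: unisolvence says that the pair (boundary dofs, internal moments) determines $w_I$, and the moments of $w_I$ split as $\int_E w_I^b m_i+\int_E w_I^0 m_i$, so recovering $\Delta w_I^0$ from $\Xi_i^\circ(w)$ requires first subtracting the moments of the harmonic part $w_I^b$; your bound $\|\Delta w_I\|_{0,E}\lesssim h_E^{-1}\|w\|_{0,E}$ therefore does not follow as written (and the exponent of $h_E$ is also off on dimensional grounds). Second, and more seriously, the boundary estimate is the crux and is not established: under A1 alone the number of edges is unbounded and edges may be arbitrarily short, and the claim that the edge-by-edge $H^{1/2}_{00}$ interpolation errors ``telescope to volume-type norms without losing any powers of $h_E$'' is precisely what would need to be proved. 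The paper's own Lemma \ref{lem3} shows that already under the stronger assumption A2 a logarithmic factor appears when passing from $L^\infty(\partial E)$ to $|\cdot|_{1/2,\partial E}$ for piecewise polynomials, so a naive edge-wise summation cannot be expected to be loss-free. The paper avoids this entirely by never measuring a one-dimensional piecewise polynomial on $\partial E$ directly: the boundary datum $u_r-u_\pi$ is the trace of a globally defined function on $E$, so $|u_r-u_\pi|_{\sigma-1/2,\partial E}\lesssim|u_r-u_\pi|_{\sigma,E}$ follows from a trace inequality on the element, and the right-hand side is handled by two-dimensional interpolation theory. To repair your proof you would essentially have to realize your boundary interpolant as the trace of such a two-dimensional object, which brings you back to the paper's construction.
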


\begin{proof}
For each element E, we can build a sub-triangulation by connecting all its vertexes with the center ${\bf x}_E$ introduced in assumption A1. We denote by ${\cal T}_h$ the global (conforming) triangular mesh obtained by applying such a procedure for all $E \in \Omega_h$.
It is easy to check that, under assumption A1, the triangles in the sequence of meshes $\{ {\cal T}_h \}_h$ have maximum angles that are uniformly bounded away from $\pi$ (although shape regularity is not guaranteed).

Let $u_r$ be the standard continuous and piecewise $\Pp_k$ polynomial Lagrange interpolant of $u$ over the triangulation ${\cal T}_h$.
Then it holds:
\begin{equation}\label{approx:fem:interp}
| u - u_r |_{\sigma,E} \lesssim h_E^{s-\sigma} |u|_{s,E} \ , \quad E \in \Omega_h, \ 1 \le \sigma < s \le k+1 , \ u \in H^{s}(\Omega) ,
\end{equation}
where we used the anisotropic approximation results in \cite{Apel-book}, also recalling the angle property above.
In the following we denote by $u_\pi$ a piecewise discontinuous polynomial approximation of $u$ over the mesh $\Omega_h$. For instance, one may think of the $L^2$ projection of $u$ on $\Pp_k(E)$ for each element $E$.

We now introduce the function $u_I \in V_h$ defined, on each element $E$, by
\begin{equation*}
\left\{
\begin{aligned}
& - \Delta u_I = - \Delta u_\pi \qquad \textrm{in } E , \\
& u_I = u_r \qquad \textrm{on } \partial E ,
\end{aligned}
\right.
\end{equation*}
so that $(u_I - u_\pi)$ satisfies on every $E$
\begin{equation}\label{diff-probl}
\left\{
\begin{aligned}
& - \Delta (u_I -u_\pi) = 0 \qquad \textrm{in } E , \\
& u_I - u_\pi = u_r - u_\pi \qquad \textrm{on } \partial E .
\end{aligned}
\right.
\end{equation}
Therefore, for all $E \in \Omega_h$, regularity results on Lipschitz domains (see \cite{grisvard}) guarantee that
\begin{equation}\label{eq:app:reg}
| u_I -u_\pi |_{\sigma,E} \lesssim | u_r - u_\pi |_{\sigma-1/2,\partial E} \quad 1 \le \sigma \le \overline{\sigma}_E ,
\end{equation}
where $\overline{\sigma}_E=2$ if $E$ is convex, and $\overline{\sigma}_E = 1 + \pi / \omega_E$ (with $\omega_E$ the largest angle of $E$) otherwise.
Let now $\overline{\sigma} = \min_{E \in \{ \Omega_h\}_h} \overline{\sigma}_E$, where we stress that the minimum is taken among all elements of the whole mesh sequence.
Due to assumption $A1$, that yields a uniform bound on the maximum element angles, the number $\overline{\sigma}$ is \emph{strictly} bigger than $3/2$.
First a triangle inequality and bound \eqref{eq:app:reg}, then a trace inequality yield
$$
\begin{aligned}
| u - u_I |_{\sigma,E} & \lesssim
| u - u_\pi |_{\sigma,E} + | u_r - u_\pi |_{\sigma-1/2,\partial E} \lesssim
| u - u_\pi |_{\sigma,E} + | u_r - u_\pi |_{\sigma,E}  \\
& \lesssim | u - u_\pi |_{\sigma,E} + | u - u_r |_{\sigma,E}
\end{aligned}
$$
for all $1 \le \sigma \le \overline{\sigma}$ and all $E \in \Omega_h$.
The result follows combining the above bound with \eqref{approx:fem:interp} and standard polynomial approximation estimates on shape regular polygons.

\end{proof}

% \end{remark}

{

Regarding the operator $\PR$, we have the following Lemma.

\begin{lem}\label{lm:newlemma}
Let A1 hold. For the operators $\PR$ described in \eqref{eq:Rint}, \eqref{eq:Rbnd} and \eqref{eq:Rvarg}, we have the following approximation properties.

\begin{enumerate}

\item For $\PR$ defined by \eqref{eq:Rint} or by \eqref{eq:Rbnd}:
\begin{equation}\label{R-approx}
\| v - \PR v \|_{0,E} \lesssim h_E |v|_{1,E} \quad \forall v \in H^{1}(E) .
\end{equation}

\item For $\PR$ defined by \eqref{eq:Rvarg}, more regularity for $v$ is needed, namely
\begin{equation}\label{R-approxh1}
\mbox{if $\varepsilon >0$}\quad \| v - \PR v \|_{0,E} \lesssim h_E |v|_{1,E} + h_E^{1+\varepsilon} |v|_{1+\varepsilon,E} \quad \forall v \in H^{1+\varepsilon}(E) ,
\end{equation}
unless $v$ is a polynomial, in which case it holds:
\begin{equation}\label{R-approx-pol}
\| p - \PR p \|_{0,E} \lesssim h_E |p|_{1,E} \quad \forall p \in \Pp_k(E) .
\end{equation}
Finally, if also assumption A2 holds, then
\begin{equation}\label{R-approxh2}
\| v_h - \PR v_h \|_{0,E} \lesssim h_E |v_h|_{1,E} \quad \forall v_h \in V_E  .
\end{equation}

\end{enumerate}

% From the above bound, an inverse estimate on polynomials easily yields
% \begin{equation}\label{R-approx-p}
% \| p - \PR p \|_{L^2(E)} \lesssim h_E |p|_{H^1(E)} \quad \forall p \in\Pp_k(E).
% \end{equation}

\end{lem}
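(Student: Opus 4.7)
The common thread in all three cases is that $\PR$ reproduces constants, so writing $v - \PR v = (v-c) - \PR(v-c)$ for a suitably chosen constant $c$ and using the triangle inequality gives
\[
\| v - \PR v \|_{0,E} \;\le\; \| v - c \|_{0,E} + |E|^{1/2}\,|\PR(v-c)| .
\]
I would always take $c$ to be the interior $L^2$-average of $v$, so that the Poincar\'e--Wirtinger inequality on a star-shaped domain (assumption A1 gives a uniform constant) bounds the first term by $h_E |v|_{1,E}$. Since $|E|^{1/2}\lesssim h_E$, the whole matter reduces to bounding $|\PR(v-c)|$ by $|v|_{1,E}$ (or by the appropriate quantity in each case).

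For the interior average \eqref{eq:Rint} the second term vanishes trivially. For the boundary average \eqref{eq:Rbnd} I would use Cauchy--Schwarz on $\partial E$ to get $|\PR(v-c)|\le |\partial E|^{-1/2}\|v-c\|_{0,\partial E}$, then invoke the standard trace inequality on a star-shaped domain
\[
\| v-c \|_{0,\partial E}^2 \;\lesssim\; h_E^{-1}\| v-c \|_{0,E}^2 + h_E |v|_{1,E}^2 ,
\]
and conclude using $|\partial E|\gtrsim h_E$ (guaranteed by A1, since $\partial E$ must surround the inscribed ball of radius $\gamma h_E$) and the Poincar\'e estimate on $\| v-c \|_{0,E}$.

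For the vertex average \eqref{eq:Rvarg} the key observation is that $|\PR(v-c)|\le \|v-c\|_{L^\infty(\overline E)}$ (or $\|v-c\|_{L^\infty(\partial E)}$, as vertices lie on $\partial E$), so I need an $L^\infty$ bound. For \eqref{R-approxh1}, since in 2D $H^{1+\varepsilon}\hookrightarrow C^0$, a scaling argument on the reference configuration (made possible by A1) yields
\[
\| v - c \|_{L^\infty(E)} \;\lesssim\; h_E^{-1}\| v-c \|_{0,E} + |v|_{1,E} + h_E^\varepsilon |v|_{1+\varepsilon,E}
\;\lesssim\; |v|_{1,E} + h_E^\varepsilon |v|_{1+\varepsilon,E},
\]
using Poincar\'e on the first term; multiplying by $|E|^{1/2}\lesssim h_E$ gives \eqref{R-approxh1}. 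For polynomials \eqref{R-approx-pol} the same line works but one replaces Sobolev embedding with the polynomial inverse estimate $\|p\|_{L^\infty(E)}\lesssim h_E^{-1}\|p\|_{0,E}$, which under A1 is available via the sub-triangulation argument of Remark \ref{rm:vem-invest}; this removes the higher-order term.

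The delicate case, and the one I would expect to be the main obstacle, is \eqref{R-approxh2}, since a general $v_h \in V_E$ is only $H^1$ (not $H^{1+\varepsilon}$) and not polynomial on $E$. Here I would exploit the fact that the vertices live on $\partial E$, where $v_h$ is piecewise polynomial, and apply Lemma \ref{lem:A2} (which is precisely where assumption A2 enters) to get
\[
|\PR(v_h-c)|^2 \;\le\; \| v_h - c \|_{L^\infty(\partial E)}^2 \;\lesssim\; h_E^{-1}\| v_h - c \|_{0,\partial E}^2 + |v_h|_{1/2,\partial E}^2 .
\]
The boundary $L^2$ term is controlled by the trace inequality plus Poincar\'e, giving $h_E |v_h|_{1,E}^2$; the half-norm term is controlled by Lemma \ref{lem:tr:um}, giving $|v_h|_{1,E}^2$. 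Putting everything together and multiplying by $|E|^{1/2}\lesssim h_E$ yields the desired bound. Note that A2 is essential only in this last step, which matches the statement of the lemma.
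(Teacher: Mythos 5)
Your proposal is correct and follows essentially the same strategy as the paper: reduce to a constant via the triangle inequality and Poincar\'e under A1, and, for the delicate case \eqref{R-approxh2}, control $\PR$ of the remainder through an $L^\infty(\partial E)$ bound obtained from Lemma \ref{lem:A2} together with the $H^{1/2}$ trace bound of Lemma \ref{lem:tr:um}. The only cosmetic difference is your choice of the interior $L^2$ average as the reference constant where the paper uses the boundary average (which, via the zero-mean estimate in the proof of Lemma \ref{lem:A2}, lets it skip the scaled trace inequality $\|v_h-c\|_{0,\partial E}^2\lesssim h_E^{-1}\|v_h-c\|_{0,E}^2+h_E|v_h|_{1,E}^2$ that you need, and which the paper does establish under A1 alone in Lemma \ref{lem:ani-trace}).
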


\begin{proof}
Estimates \eqref{R-approx} and \eqref{R-approxh1} follow, recalling assumption A1, from standard approximation theory on shape regular polygons.
Bound \eqref{R-approx-pol} follows immediately from \eqref{R-approxh1} by using an inverse estimate for polynomials on polygons, see Remark \ref{rm:vem-invest}.

To prove \eqref{R-approxh2}, take any $v_h\in V_E$ and set
$\overline{v}_h :=|\partial E|^{-1} \int_{\partial E} v_h$. We have
\begin{equation}\label{vmean-1}
||v_h -\PR v_h ||_{0,E} = || (v_h - \overline{v}_h) + \PR(\overline{v}_h - v_h) ||_{0,E}
\le || v_h - \overline{v}_h ||_{0,E} + || \PR(\overline{v}_h - v_h) ||_{0,E}
\end{equation}
From \eqref{R-approx} we get
\begin{equation}\label{vmean-2}
|| v_h - \overline{v}_h ||_{0,E} \lesssim h_E |v_h|_{1,E} .
\end{equation}
Furthermore, from \eqref{eq:Rvarg} and recalling that $\PR v_h$ is a constant, we get
\begin{equation}\label{vmean-3}
|| \PR(\overline{v}_h - v_h) ||_{0,E} \lesssim h_E || \PR(\overline{v}_h - v_h) ||_{L^\infty(\partial E)}
\lesssim h_E || \overline{v}_h - v_h ||_{L^\infty(\partial E)} .
\end{equation}
Since $\overline{v}_h - v_h$ has zero mean value on $\partial E$, using Lemma \ref{lem:A2} we get
$$
|| \overline{v}_h - v_h ||_{L^\infty(\partial E)} \lesssim | v_h|_{1/2,\partial E} .
$$
Hence, from \eqref{vmean-3} and \eqref{B2}, we obtain
\begin{equation}\label{vmean-4}
|| \PR(\overline{v}_h - v_h) ||_{0,E} \lesssim h_E |  v_h |_{1/2,\partial E}
\lesssim h_E |  v_h |_{1,E} .
\end{equation}
Estimates \eqref{vmean-1}, \eqref{vmean-2} and \eqref{vmean-4} give \eqref{R-approxh2}.

\end{proof}

}

% ------------------------------------------------------
\section{A general error analysis}\label{sec:2}
% ------------------------------------------------------

In the present section we derive an error analysis
which is more general than the standard one detailed in \cite{volley}. We remark that the present approach
can be applied to any other linear symmetric elliptic problem.

For the analysis, the following discrete semi-norm, induced by the stability term, will play an important role:
\begin{equation}\label{st-seminorm}
\tri v \tri_E ^2 := s_E \big( (I-\PR) v , (I-\PR) v \big)
+ a_E(\Pi_E v , \Pi_E v)\quad \forall v \in V_{h|\E} + {\cal V}_{\E}.
\end{equation}
Above, ${\cal V}_{\E} \subseteq V_{|\E}$ is a subspace of sufficiently regular functions in order for $s_E(\cdot,\cdot)$ to make sense.

We now introduce the following assumption, for all $\E\in\Th$.

\smallskip
{\bf Main assumption} - We assume that it holds
\begin{eqnarray}\label{ass-1}
&& a_E(v_h,v_h) \le C_1(E) \: \tri v_h \tri_{\E}^2 \qquad \forall v_h \in V_{h|\E} , \\
\label{ass-2}
&&  \tri p \tri_{\E}^2 \le C_2(E) \: a_E(p,p) \qquad \forall p \in \Pp_k(\E) ,
\end{eqnarray}
with $C_1(E),\ C_2(E)$ positive constants which depend on the shape and possibly on the size of $E$.

\begin{lem}\label{lemma-1}
Under assumptions \eqref{ass-1}, \eqref{ass-2}, the local discrete bilinear form \eqref{ah-form} satisfies the stability condition
\begin{equation}\label{lemmabound1}
C_\star(E)\tri v_h \tri_{\E}^2
\lesssim a_E^h (v_h,v_h)
\lesssim C^\star(E) \tri v_h \tri_{\E}^2
\qquad \forall v_h \in V_{h|\E} ,
\end{equation}
and also the bound
\begin{equation}\label{lemmabound2}
a_E^h (v_h,v_h) \lesssim \big(1+C_2(E) \big) \big( \tri v_h \tri_{\E}^2 + | v_h |_{1,E}^2 \big)
\quad \forall v_h \in V_{h|\E} ,
\end{equation}
where $\displaystyle{C_\star(E)= {\rm min} \{ 1 , C_2(E)^{-1} \}}$
and $C^\star(E)={\rm max}(1, C_1(E)C_2(E))$.
\end{lem}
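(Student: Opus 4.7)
The plan is as follows. Both $a_E^h(\cdot,\cdot)$ and $\tri\cdot\tri_E^2$ share the term $a_E(\Pi_E \cdot, \Pi_E \cdot)$ and differ only in the stability contribution: the first uses $s_E((I-\Pi_E)\cdot, (I-\Pi_E)\cdot)$, the second uses $s_E((I-\PR)\cdot, (I-\PR)\cdot)$. The third condition in~\eqref{proj} gives $\PR(\Pi_E v_h) = \PR v_h$, whence the algebraic identity
$$
(I-\Pi_E) v_h \;=\; (I-\PR) v_h \;-\; (I-\PR)(\Pi_E v_h).
$$
This identity, combined with the triangle inequality for the semi-norm $\sqrt{s_E(\cdot,\cdot)}$, reduces matters to controlling $s_E((I-\PR)(\Pi_E v_h), (I-\PR)(\Pi_E v_h))$.

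Since $\Pi_E v_h \in \Pp_k(E)$, applying assumption~\eqref{ass-2} to $p = \Pi_E v_h$ yields the central estimate
$$
s_E\bigl((I-\PR)(\Pi_E v_h), (I-\PR)(\Pi_E v_h)\bigr) \;=\; \tri \Pi_E v_h \tri_E^2 - a_E(\Pi_E v_h, \Pi_E v_h) \;\le\; C_2(E)\, a_E(\Pi_E v_h, \Pi_E v_h).
$$
Together with the triangle inequality (using $(a+b)^2 \le 2a^2 + 2b^2$), this produces
$$
s_E\bigl((I-\Pi_E) v_h, (I-\Pi_E) v_h\bigr) \;\le\; 2\, s_E\bigl((I-\PR) v_h, (I-\PR) v_h\bigr) + 2\, C_2(E)\, a_E(\Pi_E v_h, \Pi_E v_h),
$$
together with its mirror image obtained by exchanging the two stability terms.

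For the upper bound in~\eqref{lemmabound1}, I combine the above with assumption~\eqref{ass-1} applied to $v_h$, which gives $a_E(\Pi_E v_h, \Pi_E v_h) \le a_E(v_h, v_h) \le C_1(E) \tri v_h \tri_E^2$. Using also the trivial $s_E((I-\PR)v_h,(I-\PR)v_h) \le \tri v_h \tri_E^2$ and summing with $a_E(\Pi_E v_h, \Pi_E v_h)$ yields
$$
a_E^h(v_h, v_h) \;\le\; \bigl(2 + C_1(E) + 2\,C_1(E) C_2(E)\bigr)\, \tri v_h \tri_E^2 \;\lesssim\; C^\star(E)\, \tri v_h \tri_E^2,
$$
where the final step uses that combining~\eqref{ass-1} and~\eqref{ass-2} on any nontrivial polynomial forces $C_1(E) C_2(E) \ge 1$, so that $C_2(E) \ge 1$ and $C_1(E) \le C_1(E) C_2(E) \le C^\star(E)$. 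For the lower bound, the mirror inequality gives
$$
\tri v_h \tri_E^2 \;\le\; 2\, s_E\bigl((I-\Pi_E) v_h, (I-\Pi_E) v_h\bigr) + \bigl(1 + 2 C_2(E)\bigr)\, a_E(\Pi_E v_h, \Pi_E v_h) \;\lesssim\; \max\{1,C_2(E)\}\, a_E^h(v_h,v_h),
$$
which rearranges to $a_E^h(v_h,v_h) \gtrsim C_\star(E) \tri v_h \tri_E^2$.

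Finally, for~\eqref{lemmabound2} I replace the appeal to~\eqref{ass-1} by the direct continuity bound $a_E(\Pi_E v_h, \Pi_E v_h) \le a_E(v_h, v_h) \lesssim |v_h|_{1,E}^2$, obtained from~\eqref{ellipt-a}; plugging into the squared triangle inequality above then gives $a_E^h(v_h, v_h) \lesssim (1 + C_2(E))(\tri v_h\tri_E^2 + |v_h|_{1,E}^2)$. The main obstacle is the algebraic bookkeeping that forces the constants to collapse into the advertised $C_\star$ and $C^\star$; once the identity relating $(I-\Pi_E)$ and $(I-\PR)$ and the polynomial estimate via~\eqref{ass-2} are in place, everything else reduces to the triangle inequality and trivial bounds.
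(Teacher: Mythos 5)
Your proof is correct and follows essentially the same route as the paper: both arguments hinge on the fact that $\PR(I-\Pi_E)v_h=0$, on applying \eqref{ass-2} to the polynomial $\Pi_E v_h$, and on triangle inequalities plus the projection property $a_E(\Pi_E v_h,\Pi_E v_h)\le a_E(v_h,v_h)$ (the paper packages this as $a_E^h(v_h,v_h)=a_E(\Pi_E v_h,\Pi_E v_h)+\tri v_h-\Pi_E v_h\tri_E^2$, while you use the equivalent decomposition $(I-\Pi_E)v_h=(I-\PR)v_h-(I-\PR)\Pi_E v_h$ at the level of $s_E$). One small repair: $C_2(E)\ge 1$ does not follow logically from $C_1(E)C_2(E)\ge 1$ as your ``so that'' suggests, but it does follow directly from \eqref{ass-2} applied to a non-constant $p\in\Pp_k(E)$, since $\tri p\tri_E^2\ge a_E(\Pi_E p,\Pi_E p)=a_E(p,p)$ --- and in any case the detour through \eqref{ass-1} for the standalone term is unnecessary, because the trivial bound $a_E(\Pi_E v_h,\Pi_E v_h)\le\tri v_h\tri_E^2$ already yields the constant $3+2C_1(E)C_2(E)$ obtained in the paper.
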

\begin{proof}
%Let, for any $q \in \Pp_k(\E)$, the decomposition
%$q = q^\bot + \overline{q}$ with $q^\bot \in \Pp_k^\bot(\E)$
%and $\overline{q} \in \Pp_0(\E)$. Note that $\wP_\E(\overline{q})=0$
%by definition of the operator and thus $\wP_\E(q)=q^\bot$.
%%%
%Then, from \eqref{ah-form} and the definition of $\wP_\E$ it follows, for all $q \in \Pp_k(\E)$ and $v_h \in V_{h|\E}$,
%$$
%a_E^h (v_h,q) = a_E (\wP_\E v_h , q^\bot) + \scl v_h - \wP_\E v_h , \overline{q} \scr_\E
%=  a_E (v_h , q) ,
%$$
%where in the last step we used that $\overline{q}$
%is in the kernel of the bilinear form $a_E$
%and, due to \eqref{ass-2}, also of the form $\scl \cdot , \cdot \scr_{\E}$.
%%%
%The equation above is the consistency condition.
%
We start by noting that, from definition \eqref{proj} it is immediate to check that
\begin{equation}\label{eq:ovvio}
\PR (I - \Pi_E) v_h = 0 \quad \forall v_h \in V_{h|\E} .
\end{equation}
Using first \eqref{eq:ovvio}, then noting that $\Pi_E(I - \Pi_E)=0$ and applying \eqref{ass-2}, we obtain (cf. \eqref{st-seminorm})
\begin{equation}\label{eq:lem:1}
\begin{aligned}
 a_E^h (v_h,v_h) & = a_E (\Pi_E v_h, \Pi_E v_h) + s_E((I-\PR) (I-\Pi_E)v_h , (I-\PR)(I-\Pi_E)v_h) \\
&= a_E (\Pi_E v_h, \Pi_E v_h) + a_E (\Pi_E(I - \Pi_E)v_h, \Pi_E(I - \Pi_E)v_h)\\
& + s_E((I-\PR) (I-\Pi_E)v_h , (I-\PR)(I-\Pi_E)v_h)\\
&  \ge C_2(E)^{-1} \tri \wP_\E v_h \tri_\E^2
+ \tri v_h - \wP_\E v_h \tri_\E^2 \ge C_\star(E) \tri v_h \tri_{\E}^2
\end{aligned}
\end{equation}
for all $v_h \in V_{h|\E}$, with $\displaystyle{C_\star(E) = \frac{1}{2}{\rm min} \{ 1 , C_2(E)^{-1} \}}$.
Again using the first identity in \eqref{eq:lem:1}, recalling definition \eqref{st-seminorm}, from the triangle inequality we get
%
% that $\Pi_E$ is a projection with respect to $a_E(\cdot,\cdot)$ yields
% $$
% a_E^h (v_h,v_h) \le a_E (v_h, v_h) + \tri v_h - \wP_\E v_h \tri_\E^2 .
% $$
%
% From the above bound, using \eqref{ass-1} and a triangle inequality it is immediate to derive
%
\begin{equation}
a_E^h (v_h,v_h) \le  \tri v_h \tri_{\E}^2 + \tri v_h - \wP_\E v_h \tri_\E^2 \le
3 \tri v_h \tri_{\E}^2 +  2 \tri \wP_\E v_h \tri_{\E}^2  .
\end{equation}
Since $\wP_\E$ is a projection with respect to $a^\E$
and using \eqref{ass-2} we obtain
\begin{equation}\label{X-L-1}
\tri \wP_\E v_h \tri_{\E}^2 \le C_2(E) \: a_E(\wP_\E v_h,\wP_\E v_h) \le
C_2(E) \: a_E(v_h,v_h) .
\end{equation}
From \eqref{X-L-1} we immediately get
$$
\tri \wP_\E v_h \tri_{\E}^2 \le C_2(E) \: M \: | v_h |_{1,E}^2 ,
$$
and also, recalling \eqref{ass-1},
$$
\tri \wP_\E v_h \tri_{\E}^2 \le C_1(E) C_2(E) \: \tri v_h \tri_{\E}^2 .
$$
Combining the above bounds it follows
$$
\begin{aligned}
& a_E^h (v_h,v_h) \le 3 \tri v_h \tri_{\E}^2 + 2 C_2(E) \: M | v_h |_{1,E}^2 , \\
& a_E^h (v_h,v_h) \le C^\star(E) \tri v_h \tri_{\E}^2
\end{aligned}
$$
with $C^\star(E)=3 + 2C_1(E)C_2(E) $.
\end{proof}

As an immediate consequence of Lemma \ref{lemma-1} and \eqref{ass-1},
the discrete bilinear form \eqref{dec_ah} associated
to \eqref{ah-form} satisfies
\begin{equation}\label{discr-stab}
a^h (v_h,v_h) \ge C_{stab}(h) a (v_h,v_h) \ge C_{stab}(h) \alpha \:|| v_h ||_{H^1(\O)}^2
\qquad \forall v_h \in V_{h} ,
\end{equation}
where

\begin{equation}\label{cstab}
C_{stab}(h) = \displaystyle{ \min_{E\in \Th}\frac{C_\star(E)}{C_1(E)} }.
\end{equation}
Therefore, due to \eqref{ellipt-a}, the discrete problem
is positive definite and problem \eqref{discr-pbl} has a unique solution.

We have moreover the following convergence result.
For all sufficiently regular functions $v$, introduce the global semi-norms
\begin{equation}\label{st-norm}
\tri v \tri^2 = \sum_{\E\in\Th} \tri v \tri_{\E}^2 \ , \qquad | v |^2_{1,h} = \sum_{\E\in\Th} | v |_{1,E}^2 .
\end{equation}
We notice that, by~\eqref{ass-1} and the Poincar\'e inequality,
$\tri\cdot  \tri$ is a norm on $V_h$, not only a semi-norm.
Furthermore, for any $h$, let $\mathfrak{F}_h$
denote the quantity
\begin{equation}\label{defrhs}
\mathfrak{F}_h = \sup_{v\in V_h}\frac{(f,v)-<f_h,v>}{\tri v\tri} .
\end{equation}
We remark that, again by~\eqref{ass-1}, it holds:
\begin{equation}\label{defrhs-bis}
\mathfrak{F}_h \lesssim \sup_{v\in V_h}\frac{(f,v)-<f_h,v>}{|v|_{1,\Omega}} .
\end{equation}
Therefore, taking $f_h$ as in \cite{volley} and using the arguments in that paper, we infer:
\begin{equation}\label{defrhs-ter}
\mathfrak{F}_h \leq C(f) h^k ,
\end{equation}
where $C(f)$ depends on suitable Sobolev norms of the source term $f$.
%
%
% \begin{equation}\label{defrhs}
% (f,v)-<f_h,v>\,\le \,\mathfrak{F}_h\,| v|_{H^1(\O)}\quad\forall\, v\in V_h.
% \end{equation}
%

We have the following result.

\begin{thm}\label{teoconv}
Let assumptions \eqref{ass-1}-\eqref{ass-2} hold and let
the continuous solution of \eqref{cont-pbl} satisfy $u_{|\E} \in {\cal V}_\E$ for all $\E \in \Th$.
Then, for every $u_I\in V_h$ and for
every $\up$ such that $u_{\pi|E}\in\Pp_k(E)$, the discrete solution $u_h$
of \eqref{discr-pbl} with bilinear form \eqref{ah-form} satisfies
\begin{equation}\label{teoconv:eq}
| u-u_h |_{H^1(\Omega)} \,\lesssim C_{err}(h) \Big(
(\mathfrak{F}_h) + \tri u-u_I \tri + \tri u-\up \tri
+ | u-u_I |_{H^1(\Omega)} + |u-\up|_{1,h}
\Big).
\end{equation}
Setting
\begin{equation}\label{cnsts}
\widetilde C(h) =\max_{E\in\Th} \{1, C_2(E) \}   , \quad
C_1(h) =\max_{E\in\Th} \{C_1(E) \}  , \quad
C^\star(h) =\max_{E\in\Th} \{C^\star(E) \} ,
\end{equation}
the constant $C_{err}(h)$ is given by
$C_{err}(h) = \max {\{ 1, \widetilde C(h) C_1(h), \widetilde C(h)^{3/2} \sqrt{C^\star(h) C_1(h)}  \}} $.
\end{thm}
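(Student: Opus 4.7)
The plan is a Strang--type argument centered on $\delta_h := u_h - u_I \in V_h$. A triangle inequality gives $|u-u_h|_{1,\Omega} \le |u-u_I|_{1,\Omega} + |\delta_h|_{1,\Omega}$, so it suffices to estimate $|\delta_h|_{1,\Omega}$. Combining the coercivity in \eqref{ellipt-a} with assumption \eqref{ass-1} summed over elements, and then the lower bound in \eqref{lemmabound1}, I would first establish the transfer inequalities
\[
|\delta_h|_{1,\Omega}^2 \lesssim C_1(h)\,\tri\delta_h\tri^2, \qquad \tri\delta_h\tri^2 \lesssim \widetilde C(h)\,a^h(\delta_h,\delta_h),
\]
so the task reduces to bounding $a^h(\delta_h,\delta_h)$.

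Next I would derive the error equation. Using the discrete problem $a^h(u_h,\delta_h)=\langle f_h,\delta_h\rangle$ and the continuous one $a(u,\delta_h)=(f,\delta_h)$,
\[
a^h(\delta_h,\delta_h) = \bigl[\langle f_h,\delta_h\rangle - (f,\delta_h)\bigr] + \bigl[a(u,\delta_h) - a^h(u_I,\delta_h)\bigr].
\]
Introducing the piecewise polynomial $u_\pi$ and exploiting the polynomial consistency of $a^h_E$ --- which follows from $\Pi_E u_\pi = u_\pi$ together with the orthogonality in \eqref{proj}, yielding $a^h_E(u_\pi,v) = a_E(u_\pi,\Pi_E v) = a_E(u_\pi,v)$ for every $v\in V_E$ --- the polynomial contributions cancel and the second bracket becomes
\[
a(u,\delta_h) - a^h(u_I,\delta_h) = \sum_{E\in\Th}\bigl[a_E(u-u_\pi,\delta_h) - a^h_E(u_I-u_\pi,\delta_h)\bigr].
\]

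Then I would estimate the three resulting contributions separately. The loading defect is bounded by $\mathfrak F_h\,\tri\delta_h\tri$ via \eqref{defrhs}; continuity of $a_E$ gives $|\sum_E a_E(u-u_\pi,\delta_h)| \le M\,|u-u_\pi|_{1,h}\,|\delta_h|_{1,\Omega}$; and Cauchy--Schwarz on the symmetric positive semidefinite form $a^h$ (elementwise and then globally) together with the upper bound in \eqref{lemmabound1} applied to $(u_I-u_\pi)_{|E}\in V_E$ gives
\[
\Bigl|\sum_{E\in\Th} a^h_E(u_I-u_\pi,\delta_h)\Bigr| \le \sqrt{a^h(u_I-u_\pi,u_I-u_\pi)}\sqrt{a^h(\delta_h,\delta_h)} \lesssim \sqrt{C^\star(h)}\,\tri u_I-u_\pi\tri\,\sqrt{a^h(\delta_h,\delta_h)}.
\]
Substituting the two transfer inequalities into the error equation yields a linear inequality in $\sqrt{a^h(\delta_h,\delta_h)}$; dividing through and then applying the triangle inequality $\tri u_I-u_\pi\tri\le\tri u-u_I\tri+\tri u-u_\pi\tri$ delivers the claimed estimate \eqref{teoconv:eq}.

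The main obstacle is the bookkeeping that verifies every resulting coefficient is dominated by $C_{err}(h)$. The coefficients of $\mathfrak F_h$, $|u-u_\pi|_{1,h}$ and the two $\tri\cdot\tri$ terms turn out to be $\sqrt{C_1(h)}\widetilde C(h)$, $C_1(h)\widetilde C(h)$, and $\sqrt{C_1(h)\widetilde C(h)C^\star(h)}$ respectively; using $\widetilde C(h),C^\star(h)\ge 1$ each one is absorbed into the max defining $C_{err}(h)$, the most delicate being $\sqrt{C_1(h)\widetilde C(h)C^\star(h)} \le \widetilde C(h)^{3/2}\sqrt{C^\star(h)C_1(h)}$ (ratio $\widetilde C(h)\ge 1$). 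It is crucial here to invoke the sharp elementwise bound of \eqref{lemmabound1} rather than the coarser \eqref{lemmabound2}, since the latter would introduce an extra factor $\sqrt{\widetilde C(h)}$ that would spoil the final constant.
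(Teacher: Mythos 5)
Your proposal is correct and follows essentially the same route as the paper: the Strang-type argument on $u_h-u_I$, the coercivity transfer via Lemma \ref{lemma-1}, the decomposition into the load-consistency term, the $\sum_E a_E^h(u_I-\up,\cdot)$ term and the $\sum_E a_E(u-\up,\cdot)$ term via polynomial consistency, and the final triangle inequality are exactly the paper's steps, with your only deviation being that you bound the middle term using the upper estimate of \eqref{lemmabound1} on $(u_I-\up)_{|E}$ whereas the paper applies \eqref{lemmabound2} to that factor and \eqref{lemmabound1} to the other. Your closing remark is the one inaccuracy: using \eqref{lemmabound2} there would \emph{not} spoil the final constant --- the paper does exactly that, and the resulting extra factor together with the $|u_I-\up|_{1,h}$ contribution are precisely what the stated $C_{err}(h)$ and the terms $|u-u_I|_{H^1(\Omega)}+|u-\up|_{1,h}$ on the right-hand side of \eqref{teoconv:eq} are designed to absorb.
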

\begin{proof}
First using the coercivity property in Lemma \ref{lemma-1},
then with identical calculations as in Theorem 3.1,
equation (3.11), of \cite{volley}, we get
\begin{equation}\label{EQ1}
\tri u_h-u_I \tri^2 \le \widetilde C(h) a^h(u_h-u_I,u_h-u_I) =
\widetilde C(h) \left( T_1 + T_2 + T_3 \right),
\end{equation}
where $\displaystyle{ \widetilde C(h) =\max_{E\in\Th} \{1, C_2(E) \}}$, and the terms $T_i$ are given by
$$
\begin{aligned}
& T_1 = <f_h,u_h-u_I>-(f,u_h-u_I) , \\
& T_2 = \sum_{\E\in\Th} a_E^h(\up-u_I,u_h-u_I) , \\
& T_3 = \sum_{\E\in\Th} a_E(u-\up,u_h-u_I) .
\end{aligned}
$$
For term $T_1$, definition \eqref{defrhs} and assumption \eqref{ass-1} yield
\begin{equation}\label{T1est}
T_1 \lesssim \mathfrak{F}_h | u_h-u_I |_{1,\O} \lesssim \sqrt{C_1(h)}\,
\mathfrak{F}_h \tri u_h-u_I \tri ,
\end{equation}
where $\displaystyle{ C_1(h) =\max_{E\in\Th} \{C_1(E) \}}$.
Term $T_2$ is treated using \emph{both} the bounds \eqref{lemmabound1} and \eqref{lemmabound2}, that easily lead to the estimate
\begin{equation}\label{T2est}
\begin{aligned}
T_2 & \lesssim \sqrt{C^\star(h) \widetilde C(h)} \Big( \tri \up-u_I \tri + | \up-u_I |_{1,\Omega} \Big)
\tri u_h-u_I \tri \\
& \le \sqrt{C^\star(h)\widetilde C(h)} \Big( \tri u-u_I \tri + \tri u-\up \tri + | u-u_I |_{1,\Omega} + | u-\up |_{1,h} \Big)
\tri u_h-u_I \tri ,
\end{aligned}
\end{equation}
where $\displaystyle{ C^\star(h) =\max_{E\in\Th} \{C^\star(E) \}}$.
Term $T_3$ is bounded using the piecewise continuity in
$H^1$ of the continuous bilinear form and \eqref{ass-1}
\begin{equation}\label{T3est}
\begin{aligned}
T_3
\lesssim  \sqrt{C_1(h)} \sum_{\E\in\Th} |u-\up|_{1,E} \tri u_h-u_I \tri_{\E}
\le \sqrt{C_1(h)} \: | u-\up |_{1,h} \tri u_h - u_I \tri .
\end{aligned}
\end{equation}

From \eqref{EQ1}, using the bounds \eqref{T1est}, \eqref{T2est} and \eqref{T3est},
then dividing by $\tri u_h-u_I \tri$, we get
\begin{equation}\label{tri-final}
\begin{aligned}
\tri u_h-u_I \tri & \lesssim \widetilde C(h) \max {\{ \sqrt{C_1(h)},\sqrt{C^\star(h)\widetilde C(h)} \}}  \\
&  \times \Big( \mathfrak{F}_h + \tri u-u_I \tri + \tri u-\up \tri + | u-u_I |_{1,\Omega}
+ | u-\up |_{1,h} \Big) .
\end{aligned}
\end{equation}
The triangle inequality and \eqref{ass-1} give
\begin{equation}\label{tri-final2}
| u-u_h |_{H^1(\Omega)} \le | u-u_I |_{H^1(\Omega)} + | u_h-u_I |_{H^1(\Omega)}
\le | u-u_I |_{H^1(\Omega)} + \sqrt{C_1(h)} \tri u_h-u_I \tri ,
\end{equation}
Combining \eqref{tri-final} and \eqref{tri-final2}, we get \eqref{teoconv:eq} with
$$
C_{err}(h) = \max {\{ 1, \widetilde C(h) C_1(h), \widetilde C(h)^{3/2} \sqrt{C^\star(h) C_1(h)}  \}} .
$$

\end{proof}

\begin{remark}\label{rem:norm-est}
By using \eqref{tri-final} and the triangle inequality it is immediate to check that, as a corollary of the above result, it also holds
$$
\tri u-u_h \tri \,\leq \widetilde C_{err}(h)\Big(
(\mathfrak{F}_h) + \tri u-u_I \tri + \tri u-\up \tri
+  |u-\up|_{1,h}
\Big) .
$$
where $\widetilde C_{err}(h)=\max {\{1,  \widetilde C(h)\sqrt{C_1(h)},  \widetilde C(h)^{3/2} \sqrt{C^\star(h)} \}} $.
\end{remark}

%\begin{remark}\label{rem:genmesh}
%We notice that Theorem~\ref{teoconv} still holds
%true without assuming any of the mesh conditions previously introduced, it is sufficient that each
%element $\E$ is a {\em simple polygon}.
%\end{remark}

% ------------------------------------------------------
\subsection{Reduction to the boundary}
\label{ssec:boundaryred}
% ------------------------------------------------------

In the present section we derive a result that allows to focus the analysis of assumptions \eqref{ass-1} and \eqref{ass-2} only on the boundary of the element. We here consider two cases for the internal stabilization form $s_E^\circ (\cdot,\cdot)$:
\begin{enumerate}
 \item as in the standard VEM (e.g. \cite{volley}), we put (see \eqref{stab-int-bound})

\begin{equation*}
 s_E^\circ(v,w)    =  \sum_{i=1}^{k(k-1)/2} \Xi_i^\circ(v) \Xi_i^\circ(w)  \quad \textrm{(part involving the internal DoFs)} ;
\end{equation*}

 \item we completely neglect the internal contribution (see \eqref{no-int-stab}), i.e.

\begin{equation*}
s_E^\circ(v,w)    =  0 .
\end{equation*}

\end{enumerate}

Both cases will lead to the same results.
Instead, the boundary bilinear form $s_E^\partial (\cdot,\cdot)$ is left completely general for the moment, and different choices will be made and analysed in Section \ref{sec:3}.

We start by showing the following Lemma.

\begin{lem}\label{pol-delta}
For all $v_h \in V_E$, there exists a polynomial $\widetilde p \in \Pp_k(E)$ such that $\Delta \widetilde p = \Delta v_h$ satisfying:
\begin{equation}\label{eq:pol-delta}
| \widetilde p |_{1,E} \lesssim h_E || \Delta v_h ||_{1,E} .
\end{equation}
\end{lem}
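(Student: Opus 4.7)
The plan is to construct $\widetilde p$ via a polynomial right inverse of $\Delta$ on the ball $B_E$ from assumption A1, and then push the resulting bound from $B_E$ to $E$ using norm equivalence for polynomials of fixed maximum degree. Since $\Delta v_h \in \Pp_{k-2}(E)$, the existence of some $\widetilde p \in \Pp_k(E)$ with $\Delta \widetilde p = \Delta v_h$ follows from the surjectivity of $\Delta : \Pp_k \to \Pp_{k-2}$ (in 2D, a dimension count shows $\dim \Pp_k - \dim\{\text{harmonic polynomials of degree} \le k\} = (k+1)(k+2)/2 - (2k+1) = k(k-1)/2 = \dim \Pp_{k-2}$). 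I would fix a canonical right inverse by requiring $\widetilde p$ to be $L^2(B_E)$-orthogonal to the harmonic polynomials of degree $\le k$.

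Next I would run a standard scaling argument on $B_E$. Setting $\hat x = (x-{\bf x}_E)/\rho_E$ and $\hat p(\hat x)=\widetilde p({\bf x}_E+\rho_E\hat x)$, $\hat f(\hat x) = (\Delta v_h)({\bf x}_E+\rho_E\hat x)$, the equation becomes $\Delta_{\hat x}\hat p = \rho_E^2\, \hat f$ on the unit ball $\hat B$. Because the right inverse is a linear map between finite-dimensional polynomial spaces on a fixed domain, equivalence of norms there yields
\[
|\hat p|_{1,\hat B} \,\lesssim\, \|\Delta_{\hat x}\hat p\|_{0,\hat B} \,=\, \rho_E^2 \,\|\hat f\|_{0,\hat B},
\]
with a constant depending only on $k$. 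Converting back via $|\hat p|_{1,\hat B} = |\widetilde p|_{1,B_E}$ and $\|\hat f\|_{0,\hat B} = \rho_E^{-1}\|\Delta v_h\|_{0,B_E}$, and using $\rho_E \le h_E$, we obtain $|\widetilde p|_{1,B_E} \lesssim h_E \|\Delta v_h\|_{0,E}$.

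The final step is to replace the seminorm on $B_E$ by the seminorm on $E$, which is the main (though routine) technical point. Since $E$ is star-shaped with respect to $B_E$ and has diameter $h_E$, we have $E \subset B({\bf x}_E, h_E)$, and for polynomials of degree $\le k-1$ the $L^2$-norms on the concentric balls $B_E$ and $B({\bf x}_E, h_E)$ are equivalent (via rescaling to unit and invoking norm equivalence on a finite-dimensional space, with a constant depending only on $h_E/\rho_E \le 1/\gamma$ and on $k$, cf.\ Remark \ref{rm:vem-invest}). Applying this componentwise to $\nabla \widetilde p$ gives $|\widetilde p|_{1,E} \le |\widetilde p|_{1,B({\bf x}_E,h_E)} \lesssim |\widetilde p|_{1,B_E}$, and chaining the estimates yields
\[
|\widetilde p|_{1,E} \,\lesssim\, h_E \,\|\Delta v_h\|_{0,E} \,\le\, h_E \,\|\Delta v_h\|_{1,E}.
\]
The principal obstacle here is simply keeping the scaling consistent throughout; once the right inverse is fixed on the reference ball and the polynomial norm equivalences are invoked on both $B_E$ and $B({\bf x}_E,h_E)$, no deeper analysis is required.
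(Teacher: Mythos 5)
Your proposal is correct and follows essentially the same route as the paper's (sketched) proof: existence via surjectivity of $\Delta:\Pp_k\to\Pp_{k-2}$, fixing $\widetilde p$ by $L^2(B_E)$-orthogonality to harmonic polynomials, a scaling argument on the ball, and transfer from $B_E$ to $E$ by polynomial norm equivalence under A1. You simply supply more detail than the paper and in fact establish the slightly stronger bound with $\|\Delta v_h\|_{0,E}$ on the right-hand side, which implies the stated one.
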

\begin{proof}
We only sketch the very simple proof. Since for all $v_h \in V_E$ it holds $\Delta v_h \in \Pp_{k-2}(E)$, there are (infinitely many) polynomials of degree $k$ that satisfy $\Delta \widetilde p = \Delta v_h$ (cf. \cite{SL-Sobolev}, for instance). In order to derive the bound \eqref{eq:pol-delta}, we first note that, thanks to assumption A1 and since $\Delta \widetilde p = \Delta v_h$, inequality \eqref{eq:pol-delta} is equivalent to
$$
| \widetilde p |_{1,B_E} \lesssim h_E || \Delta \widetilde p ||_{1,B_E} .
$$
The above bound, that is now restricted on balls, can be easily deduced by choosing $\widetilde p$ in the subspace
$$
\big\{ q \in \Pp_{k}(B_E) \ : \ \int_{B_E} \!\! q \: p = 0 \ \textrm{for all harmonic polynomials } p \in \Pp_k(B_E) \big\}
$$
and by a scaling argument.
\end{proof}

Concerning assumption \eqref{ass-1}, we have the following result.

\begin{prop}
\label{prop:upperbound}
Let assumption A1 and let $s_E^\circ (\cdot,\cdot)$ be given as in \eqref{stab-int-bound} or \eqref{no-int-stab}. Assume the existence of a positive constant $\widehat C_1(E)$ such that
\begin{equation}\label{eq:hyp:1}
\begin{aligned}
| v_h |_{1/2,\partial E}^2 % + h_E^{-1} \| (I-\PR) v_h \|_{0,\partial E}^2
\le \widehat C_1(E) \: \Big( s_E^\partial\big( (I-\PR) v_h , (I-\PR) v_h \big)&  + |\Pi_E v_h|_{1,E}^2 \Big)\\
&\forall v_h \in V_E .
\end{aligned}
\end{equation}
Then assumption \eqref{ass-1} holds with $C_1(E) \lesssim \max{\{ 1 , \widehat C_1 (E)\}}$.
\end{prop}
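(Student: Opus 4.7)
The plan is to exploit the $a_E$-orthogonality of $\Pi_E$ onto $\Pp_k(E)$ to split
\begin{equation*}
a_E(v_h, v_h) = a_E(\Pi_E v_h, \Pi_E v_h) + a_E(w, w), \qquad w := (I - \Pi_E) v_h \in V_E,
\end{equation*}
where the first term is already bounded by $\tri v_h \tri_E^2$. The remaining task is to control the ``virtual'' complement $w$ purely by its boundary $H^{1/2}$ seminorm, so that hypothesis~\eqref{eq:hyp:1} closes the estimate.

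The key estimate I will establish is
\begin{equation}\label{eq:outline-key}
a_E(w, w) \lesssim |w|_{1/2, \partial E}^2 .
\end{equation}
To prove this, I first apply Lemma~\ref{pol-delta} to $w$ to produce $\widetilde p \in \Pp_k(E)$ with $\Delta \widetilde p = \Delta w$, and combine it with Lemma~\ref{lem:lapl} (applied to $w\in V_E$) to obtain $|\widetilde p|_{1, E} \lesssim |w|_{1, E}$. Setting $\tilde w := w - \widetilde p$, the function $\tilde w$ is harmonic, and the $a_E$-orthogonality $a_E(w, \widetilde p) = 0$ gives $a_E(w, w) = a_E(w, \tilde w)$. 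Presenting the argument for $K = I$ (the general tensor case following analogously from the same orthogonality), integration by parts together with $\Delta \tilde w = 0$ yield
\begin{equation*}
a_E(w, w) = \int_{\partial E} w\, \partial_n \tilde w .
\end{equation*}

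Since $\int_{\partial E} \partial_n \tilde w = 0$, this pairing is invariant under adding constants to $w$, and so $H^{1/2}$-seminorm/$H^{-1/2}$ duality combined with Corollary~\ref{Hdiv-trace} (applied to the divergence-free field $\nabla \tilde w$) leads to
\begin{equation*}
a_E(w, w) \le |w|_{1/2, \partial E}\: |\partial_n \tilde w|_{H^{-1/2}(\partial E)} \lesssim |w|_{1/2, \partial E}\: |\tilde w|_{1, E} \lesssim |w|_{1/2, \partial E}\: |w|_{1, E} .
\end{equation*}
Absorbing $|w|_{1, E} \lesssim a_E(w, w)^{1/2}$ by ellipticity of $K$ yields~\eqref{eq:outline-key}. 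To conclude, a triangle inequality and Lemma~\ref{lem:tr:um} give $|w|_{1/2, \partial E} \lesssim |v_h|_{1/2, \partial E} + |\Pi_E v_h|_{1, E}$, and hypothesis~\eqref{eq:hyp:1} bounds $|v_h|_{1/2, \partial E}^2$. Since $s_E^\partial \le s_E$ irrespective of whether $s_E^\circ$ is chosen as in~\eqref{stab-int-bound} or as in~\eqref{no-int-stab}, the resulting right-hand side is controlled by $\max\{1, \widehat C_1(E)\}\: \tri v_h \tri_E^2$, from which the estimate $C_1(E) \lesssim \max\{1, \widehat C_1(E)\}$ follows.

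The hard part will be establishing~\eqref{eq:outline-key}. The naive route through the harmonic trace bound $|\tilde w|_{1, E} \lesssim |\tilde w|_{1/2, \partial E}$ is circular, because $|\widetilde p|_{1/2, \partial E}$ can only be estimated (via its $H^1$ seminorm) by $|w|_{1, E}$, reintroducing a term that cannot be absorbed. The dual argument through $H^{-1/2}(\partial E)$ and divergence-freeness of $\nabla \tilde w$ is precisely what allows the right-hand side to carry only a single factor of $|w|_{1, E}$, which can then be absorbed by ellipticity.
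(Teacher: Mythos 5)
Your proof is correct and rests on the same machinery as the paper's own argument: Lemma \ref{pol-delta} combined with Lemma \ref{lem:lapl} to control $\widetilde p$, an integration by parts reducing the estimate to a boundary pairing, Corollary \ref{Hdiv-trace} for the $H^{-1/2}$ norm of the normal flux of the harmonic remainder, and hypothesis \eqref{eq:hyp:1} to close. The only difference is organizational: you peel off $\Pi_E v_h$ by $a_E$-orthogonality at the outset and prove the clean intermediate bound $a_E(w,w) \lesssim |w|_{1/2,\partial E}^2$ for $w=(I-\Pi_E)v_h$, exploiting that $\partial_n \tilde w$ has zero mean where the paper instead subtracts the boundary average $\overline v_h$ and invokes the one-dimensional approximation estimate $h_E^{-1/2}\|v_h-\overline v_h\|_{0,\partial E} \lesssim |v_h|_{1/2,\partial E}$, carrying $\Pi_E v_h$ through the computation as the term $T_1$.
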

\begin{proof}
Let $v_h \in V_E$ and $\widetilde p$ as in Lemma \ref{pol-delta}. Let $\overline v_h$ be the unique constant function on $E$ such that $\int_{\partial E} \overline v_h = \int_{\partial E}  v_h$.
Then, first by an integration by parts and then by the definition of $\Pi_E$, we get
\begin{equation}\label{use-11}
\begin{aligned}
\int_E (v_h - \overline v_h) \Delta v_h \: dx & = \int_E (v_h - \overline v_h) \Delta \widetilde p \: dx
= - \int_E \nabla v_h \cdot \nabla \widetilde p  \: dx + \int_{\partial E} (v_h - \overline v_h) (\nabla \widetilde p \cdot {\bf n}_E ) \: ds \\
& = - \int_E \nabla \Pi_E v_h \cdot \nabla \widetilde p  \: dx + \int_{\partial E} (v_h - \overline v_h) (\nabla \widetilde p \cdot {\bf n}_E ) .
\end{aligned}
\end{equation}
Again an integration by parts and \eqref{use-11} yield
\begin{equation}\label{general}
\begin{aligned}
a_E(v_h,v_h) & \lesssim |v_h|_{1,E}^2 = | (v_h - \overline v_h) |_{1,E}^2 \\
& = - \int_{E} (v_h - \overline v_h) \Delta v_h \: dx \: + \: \int_{\partial E}  (v_h - \overline v_h) (\nabla v_h \cdot {\bf n}_E ) \: ds \\
& =  \int_E \nabla \Pi_E v_h \cdot \nabla \widetilde p  \: dx
+ \: \int_{\partial E}  (v_h - \overline v_h) (\nabla (v_h - \widetilde p) \cdot {\bf n}_E ) \: ds \\
& = T_1 + T_2
\end{aligned}
\end{equation}
with ${\bf n}_E$ denoting the outward unit normal to the boundary of $E$.
The first term above is bounded by the Cauchy-Schwarz inequality, Lemma \ref{pol-delta} and Lemma \ref{lem:lapl}. We obtain
\begin{equation}\label{use-14}
T_1  \le | \Pi_E v_h |_{1,E} \: | \widetilde p |_{1,E} \lesssim |\Pi_E v_h|_{1,E} | v_h |_{1,E} \lesssim \tri v_h \tri_E  | v_h |_{1,E} .
\end{equation}
For the second term, we first note that $\textrm{div} (\nabla (v_h - \widetilde p)) = \Delta (v_h - \widetilde p)$ = 0.
Therefore, after applying a (scaled) duality bound on the boundary of $E$, we can use Corollary \ref{Hdiv-trace} with ${\bf w}=\nabla (v_h - \widetilde p)$, and obtain
\begin{equation}\label{use-12}
\begin{aligned}
|T_2| & \lesssim
\Big( | (v_h - \overline v_h) |_{1/2,\partial E} + h_E^{-1/2} \| (v_h - \overline v_h) \|_{0,\partial E} \Big)
\| \nabla (v - \widetilde p) \cdot {\bf n}_E \|_{-1/2, \partial E} \\
& \lesssim \Big( | v_h |_{1/2,\partial E} + h_E^{-1/2} \| (v_h - \overline v_h) \|_{0,\partial E} \Big)
\: | (v_h - \widetilde p) |_{1,E} .
\end{aligned}
\end{equation}
Moreover, by standard approximation estimates in one dimension, it holds $h_E^{-1/2} \| (v_h - \overline v_h) \|_{0,\partial E} \lesssim | v_h |_{1/2,\partial E}$. Therefore,  using \eqref{eq:hyp:1}, the triangle inequality and again Lemmas \ref{pol-delta} and \ref{lem:lapl}, bound \eqref{use-12} yields
%% ----
\begin{equation}\label{II-bound}
\begin{aligned}
|T_2| & \lesssim  | v_h |_{1/2,\partial E} \big( | v_h |_{1,E} + | \widetilde p) |_{1,E} \big) \\
& \le \widehat C_1(E) \: \Big( s_E^\partial\big( (I-\PR) v_h , (I-\PR) v_h \big)  + |\Pi_E v_h|_{1,E}^2 \Big)
| v_h |_{1,E} \\
& \le \widehat C_1(E) \: \tri v_h \tri_E  | v_h |_{1,E} .
\end{aligned}
\end{equation}
The result follows by combining equations \eqref{general}, \eqref{use-14}, \eqref{II-bound} and recalling that $| v_h |_{1,E}^2 \lesssim a_E (v_h,v_h)$.
\end{proof}

Furthermore, concerning assumption \eqref{ass-2}, we have the following result.

\begin{prop}
\label{prop:lowerbound}
Let assumption A1 hold and let $s_E^\circ (\cdot,\cdot)$ as given in \eqref{stab-int-bound} or \eqref{no-int-stab}. Assume the existence of a positive constant $\widehat C_2(E)$ such that
\begin{equation}\label{eq:hyp:2}
s_E^\partial\big( (I-\PR) p , (I-\PR) p \big) \le \widehat C_2(E) \: | p |_{H^1(E)}^2
\quad \forall p \in \Pp_k(E) .
\end{equation}
Then assumption \eqref{ass-2} holds with $C_2(E) \lesssim \max{\{ 1 , \widehat C_2 (E)\}}$.
\end{prop}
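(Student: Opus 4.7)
The strategy is to expand $\tri p\tri_E^2$ according to its definition \eqref{st-seminorm}, and estimate the three resulting contributions separately against $a_E(p,p)$.

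First I would observe that, for $p\in\Pp_k(E)$, the function $p$ itself satisfies all three defining conditions of $\Pi_E$ in \eqref{proj}, so $\Pi_E p=p$ and consequently $a_E(\Pi_E p,\Pi_E p)=a_E(p,p)$. Writing $s_E=s_E^\partial+s_E^\circ$ I split
\begin{equation*}
\tri p\tri_E^2 = s_E^\partial\bigl((I-\PR)p,(I-\PR)p\bigr) + s_E^\circ\bigl((I-\PR)p,(I-\PR)p\bigr) + a_E(p,p).
\end{equation*}
The first of these is immediately handled by hypothesis \eqref{eq:hyp:2}, giving a bound by $\widehat C_2(E)\,|p|_{1,E}^2\lesssim \widehat C_2(E)\,a_E(p,p)$. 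The last term is $a_E(p,p)$ itself. So the real task is to control the internal piece.

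For case \eqref{no-int-stab} (with $s_E^\circ\equiv 0$) there is nothing to do. For the standard choice \eqref{stab-int-bound}, I would apply Cauchy--Schwarz in each factor $\Xi_i^\circ((I-\PR)p)=|E|^{-1}\int_E (I-\PR)p\,m_i$, using $\|m_i\|_{L^\infty(E)}\simeq 1$ and the uniform bound $k(k-1)/2$ on the number of internal moments (absorbed into the $\lesssim$ constant), to obtain
\begin{equation*}
s_E^\circ\bigl((I-\PR)p,(I-\PR)p\bigr) \lesssim |E|^{-1}\,\|(I-\PR)p\|_{0,E}^2.
\end{equation*}
To finish I would invoke Lemma \ref{lm:newlemma}: for $\PR$ as in \eqref{eq:Rint} or \eqref{eq:Rbnd} the bound \eqref{R-approx} applies directly to $p\in H^1(E)$, while for \eqref{eq:Rvarg} the polynomial estimate \eqref{R-approx-pol} is exactly what is needed. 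In all three cases $\|(I-\PR)p\|_{0,E}\lesssim h_E\,|p|_{1,E}$. Assumption A1 then gives $|E|\gtrsim h_E^2$, hence $|E|^{-1}h_E^2\lesssim 1$, so $s_E^\circ((I-\PR)p,(I-\PR)p)\lesssim |p|_{1,E}^2\lesssim a_E(p,p)$.

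Collecting the three estimates yields $\tri p\tri_E^2\lesssim \bigl(1+\widehat C_2(E)\bigr)\,a_E(p,p)$, which is assumption \eqref{ass-2} with $C_2(E)\lesssim\max\{1,\widehat C_2(E)\}$. The only delicate point is the internal-moment bound: it is the step that forces the use of A1 (to compare $h_E^2$ with $|E|$) and of the polynomial approximation properties of $\PR$ from Lemma \ref{lm:newlemma}; everything else is bookkeeping.
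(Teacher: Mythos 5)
Your proposal is correct and follows essentially the same route as the paper's proof: identify $\Pi_E p=p$, use hypothesis \eqref{eq:hyp:2} for the boundary part, and bound the internal moments via $\|m_i\|_{L^\infty(E)}\lesssim 1$ and Cauchy--Schwarz together with \eqref{R-approx} (or \eqref{R-approx-pol} for the choice \eqref{eq:Rvarg}). The only cosmetic difference is that you make explicit the step $|E|\gtrsim h_E^2$ from A1, which the paper leaves implicit.
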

\begin{proof} We first note that the second term in \eqref{st-seminorm} is immediately bounded:
\begin{equation}\label{trivest}
 a_E(\Pi_E p, \Pi_E p) = a_E( p, p) \le M  | p |_{H^1(E)}^2
\quad \forall p \in \Pp_k(E) .
\end{equation}
Therefore, by the definition of $s_E(\cdot,\cdot)$ and using \eqref{eq:hyp:2}, it is sufficient to show that
\begin{equation}\label{eq:int-pol-est}
s_E^\circ\big( (I-\PR) p , (I-\PR) p \big) \lesssim | p |_{H^1(E)}^2 .
\quad \forall p \in \Pp_k(E)
\end{equation}
Clearly, the above bound is trivial for the choice \eqref{no-int-stab}. Hence, we can focus on the choice \eqref{stab-int-bound}.
By definition of $s_E^\circ(\cdot,\cdot)$ and recalling that $\| m_i \|_{L^\infty(E)} \lesssim 1$, $i=1,2,...,n_{k-2}$, we have
\begin{equation}\label{X-newnew}
\begin{aligned}
& s_E^\circ\big( (I-\PR) p , (I-\PR) p \big) = \sum_i^{n_{k-2}} \Xi_i^\circ\big( (I-\PR)p \big)^2 \\
& = \sum_i^{n_{k-2}}  |E|^{-2} \Big( \int_E  \big((I-\PR) p \big) \ m_i \Big)^2
\lesssim \sum_i^{n_{k-2}}  |E|^{-1} \|  (I-\PR) p \|_{L^2(E)}^2 .
\end{aligned}
\end{equation}
Using property \eqref{R-approx} for the operator $\PR$ (or using \eqref{R-approx-pol} if the choice \eqref{eq:Rvarg} is being used), we now have
%Since $\PR$ is the average operator, see Remark \ref{rem:zeroav}, assumption A1 and standard approximation estimates on polygons yield
\begin{equation}
s_E^\circ\big( (I-\PR) p , (I-\PR) p \big)  \lesssim \sum_i^{n_{k-2}} |  p |_{H^1(E)}^2
\lesssim |  p |_{H^1(E)}^2 .
\end{equation}
\end{proof}

% -----------------------------------------------------------------------------------------------------------------------------
\section{Analysis of some choices for the boundary stabilization}
\label{sec:3}
% ------------------------------------------------------------------------------------------------------------------------------

In the present section we apply Propositions \ref{prop:upperbound} and \ref{prop:lowerbound} for a couple of standard choices of the boundary stability term $s^\partial_E(\cdot,\cdot)$. This allows to relax the mesh assumptions (with respect to the theory presented in \cite{volley}) in establishing stability and convergence properties of the proposed methods.

% ----------------------------------------------------------------------------------------------
\subsection{Identity matrix choice}
\label{ssec:3:1}
% ----------------------------------------------------------------------------------------------

This is the more standard, and simpler to code, choice for virtual elements. We recall it here again, for convenience:
\begin{equation}\label{stab:form:classic}
s_E^\partial(v,w)= \sum_{i=1}^{Nk} \Xi_i^\partial(v) \Xi_i^\partial(w) .
\end{equation}

We may call it {\em the identity matrix choice} since in the implementation procedure of the method, the bilinear form \eqref{stab:form:classic} is clearly associated with an identity matrix of dimension $Nk$.

% ------
% LEMMA TOLTO PERCHE' NON USATO
% ------
% Noting that any function of $V_E$ is continuous and piecewise polynomial on the boundary, an immediate adaptation of the results of Lemma 3.1 in [Bertoluzza, Math of Comp Vol. 73] gives the following result.
% \begin{lem}\label{lem:bert}
% Let A1 hold. For all $E \in \Omega_h$ and all $v_h \in V_E$ we have
% $$
% \| v_h \|_{L^{\infty}(\partial E)}^2 \lesssim C(h) \: ( h_E^{-1} \| v_h \|_{0,\partial E}^2 + |v_h|_{1/2,\partial E}^2) ,
% $$
% with $C(h) = (\log{(1+h_E/h_{m(E)})})^2$ and $h_{m(E)}$ the length of the smallest edge of $E$.
% \end{lem}
%

{
\begin{lem}\label{lem3}
Let assumptions A1 and A2 hold. For all $E \in \Omega_h$ and all $v_h \in V_E$ we have
\begin{equation}\label{eq:infty-est}
|v_h|_{1/2,\partial E}^2 \lesssim \widehat C(E) \| v_h \|_{L^{\infty}(\partial E)}^2
,
\end{equation}
with $\widehat C(E) = (\log{(1+h_E/h_{m(E)})})$.
\end{lem}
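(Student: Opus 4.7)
The strategy is to decompose the double integral \eqref{H12def} over pairs of edges of $\partial E$ and estimate each piece separately. Writing $\partial E = \bigcup_{i=1}^N e_i$ and setting $M := \|v_h\|_{L^\infty(\partial E)}$, one has
$$|v_h|_{1/2,\partial E}^2 = \sum_{i,j=1}^N I_{ij}, \qquad I_{ij} := \int_{e_i}\!\!\int_{e_j}\frac{(v_h(s_1)-v_h(s_2))^2}{(s_1-s_2)^2}\,ds_1\,ds_2.$$
By assumption A2 the number of edges is uniformly bounded, so it suffices to bound each $I_{ij}$ by a constant multiple of $M^2\widehat C(E)$.

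For diagonal terms $I_{ii}$, a standard scaling of the edge to the unit interval together with equivalence of norms on the finite dimensional space $\mathbb P_k$ yields $I_{ii} \lesssim \|v_h\|_{L^\infty(e_i)}^2 \le M^2$. For two adjacent edges sharing a vertex $p$, parametrize each by arc length from $p$, so that $e_i$ is described by $s_1\in[0,h_i]$, $e_j$ by $s_2\in[0,h_j]$, and the arc length distance along $\partial E$ between a point of $e_i$ and a point of $e_j$ equals $s_1+s_2$. The Markov inverse inequality for polynomials on each edge gives $|v_h(s)-v_h(p)| \lesssim (s/h_e)\,\|v_h\|_{L^\infty(e)}$, whence
$$I_{ij} \lesssim M^2 \int_0^{h_i}\!\!\int_0^{h_j}\frac{(s_1/h_i+s_2/h_j)^2}{(s_1+s_2)^2}\,ds_1\,ds_2.$$
A direct computation of this integral (expanding via $(a+b)^2\le 2(a^2+b^2)$ and integrating each piece explicitly) shows it is bounded by a constant independent of $h_i$ and $h_j$, so $I_{ij}\lesssim M^2$.

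For a non-adjacent pair, let $d$ denote the arc length distance between $e_i$ and $e_j$ along the shorter arc of $\partial E$. Using only the global bound $|v_h(s_1)-v_h(s_2)|\le 2M$ and elementary integration,
$$I_{ij} \le 4M^2 \int_{e_i}\!\!\int_{e_j}\!\frac{ds_1\,ds_2}{(s_1-s_2)^2} = 4M^2\,\log\frac{(d+h_i)(d+h_j)}{d(d+h_i+h_j)} \lesssim M^2\,\log(1+h_E/d).$$
Any two non-adjacent edges on $\partial E$ are separated by at least one full edge, hence $d\ge h_{m(E)}$, giving $I_{ij}\lesssim M^2\widehat C(E)$. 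Combining the three cases and summing over the $O(1)$ pairs of edges (via A2) concludes the proof. The main subtlety is the adjacent-edge case: the kernel $(s_1+s_2)^{-2}$ is nonintegrably singular at the shared vertex, and only the Lipschitz-type factor coming from the polynomial inverse inequality, combined with the explicit (rather than crude) evaluation of the integral, keeps the resulting bound uniform in the edge length ratio $h_i/h_j$.
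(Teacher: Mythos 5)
Your argument is correct, and it reaches \eqref{eq:infty-est} by a genuinely different route than the paper. The paper's proof splits $v_h=(v_h-v_L)+v_L$ with $v_L$ the piecewise linear interpolant, controls the first part edge by edge in the $H^{1/2}_{00}$ norm (no logarithm arises there), and extracts the logarithm from the weighted integral $\int_{\sigma_i}\varphi_i^2/\rho$ of the hat functions, i.e.\ from the interaction of two \emph{adjacent} edges of very different lengths. You instead decompose the double integral \eqref{H12def} over pairs of edges: the near-field singularity at a shared vertex is neutralized by the Markov inverse inequality (and your adjacent-pair bound is in fact log-free), while the logarithm emerges from the \emph{separated} pairs through the elementary integral of $(s_1-s_2)^{-2}$, with the gap bounded below by $h_{m(E)}$ because non-adjacent edges are separated by at least one full edge. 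Both proofs invoke A2 only to sum an $O(1)$ number of contributions, and both exploit the polynomial structure of $v_h|_e$ (you via the inverse inequality and finite-dimensional norm equivalence on the diagonal terms, the paper via the scaling $\|w_j\|_{H^{1/2}_{00}(e_j)}\lesssim\|w_j\|_{L^\infty(e_j)}$). Your version is more self-contained, avoiding the $H^{1/2}_{00}$ machinery altogether, at the price of a three-case analysis; it also makes transparent that the estimate is sharp, since the separated-pair computation is essentially an equality for a hat function on a badly graded boundary mesh. One small point to tighten: on the closed curve $\partial E$ the kernel should involve the periodic (geodesic) arc-length distance, so each pair of edges interacts along \emph{two} arcs; using $\min(a,b)^{-2}\le a^{-2}+b^{-2}$, the contribution of the second arc is always of your ``separated'' type with gap at least $h_{m(E)}$ (for $N\ge 3$ the complementary arc contains at least one full edge), so it is absorbed by the same bound, but the case analysis should say so explicitly.
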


\begin{proof}
We first recall that $\partial E$ is meshed by means of its edges, so that $\partial E =\cup_{j=1}^N e_j$. We also define $h_j := |e_j|$. Moreover, in the proof we will make use of the space $H^{1/2}_{00}(\Gamma)$, where $\Gamma$ is a connected part of $\partial E$ with $|\Gamma|>0$. This space is defined by, see \cite{Lions-Magenes}:

\begin{equation}\label{h1200}
 H^{1/2}_{00}(\Gamma) = \Big\{  v\in H^{1/2}(\Gamma)\ :\ {\rm Ext}(v)\in H^{1/2}(\partial E)    \Big\} ,
\end{equation}
where ${\rm Ext}(v)$ denotes the extension by zero of $v$ to the whole $\partial E$.
Its norm

\begin{equation}\label{h1200-norm}
\displaystyle{ || v ||_{H^{1/2}_{00}(\Gamma)} := \left( |v |_{1/2, \Gamma}^2 +
 \int_\Gamma \frac{v(x)^2}{\rho(x)} \: {\rm d} x \right)^{1/2} } ,
\end{equation}
where $\rho(x)$ denotes the distance of $x$ from $\partial \Gamma$, is equivalent to
$ | {\rm Ext}(v) |_{1/2,\partial E}$.

Given $v_h\in V_E$, we set $v_L\in V_E$ as the usual piecewise linear Lagrange interpolant of $v_h$, relative to the edge mesh. We have

\begin{equation}\label{eq:infty-est-2}
|v_h|_{1/2,\partial E}^2 \lesssim |v_h - v_L|_{1/2,\partial E}^2 + |v_L|_{1/2,\partial E}^2.
\end{equation}

We now define $w_j = \chi_{e_j}(v_h-v_L)$ and we notice that, since $v_h -v_L$ vanishes at all the nodes, we have

\begin{equation}\label{eq:infty-est-3}
|v_h - v_L|_{1/2,\partial E} = \Big| \sum_{j=1}^N {\rm Ext}(w_j) \Big|_{1/2,\partial E}
\le \sum_{j=1}^N | {\rm Ext}(w_j) |_{1/2,\partial E}\lesssim \sum_{j=1}^N || w_j ||_{H^{1/2}_{00}(e_j)} .
\end{equation}
Exploiting that $w_j$ is a polynomial of degrees $\le k$ on $e_j$, a scaling argument shows that
$$
|| w_j ||_{H^{1/2}_{00}(e_j)}\lesssim || w_j ||_{L^{\infty}(e_j)}.
$$
Therefore, recalling assumption A2 and using that
$||v_L ||_{L^{\infty}(\partial E)} \lesssim ||v_h ||_{L^{\infty}(\partial E)} $, it holds
\begin{equation}\label{eq:infty-est-4}
|v_h - v_L|_{1/2,\partial E} \lesssim \sum_{j=1}^N || w_j ||_{L^{\infty}(e_j)} \lesssim || v_h - v_L ||_{L^{\infty}(\partial E)} \lesssim  ||v_h ||_{L^{\infty}(\partial E)} ,
\end{equation}
by which

\begin{equation}\label{eq:infty-est-4b}
|v_h - v_L|_{1/2,\partial E}^2  \lesssim  ||v_h ||_{L^{\infty}(\partial E)}^2 .
\end{equation}

It remains to estimate $|v_L|_{1/2,\partial E}^2$. We denote by $\varphi_i$ the usual hat function with support $\sigma_i := e_{i-1}\cup e_i$ (here $i-1$ and $i$ are intended modulo N). We write
$$
v_L = \sum_{i=1}^N v_i\varphi_i ,
$$
where $v_i\in\mathbb{R}$ is the value of $v_L$ at the $i$-th node. We have, using assumption A2:

\begin{equation}\label{eq:infty-est-5}
|v_L|_{1/2,\partial E}^2
\lesssim || v_L||_{L^\infty(\partial E)}^2\sum_{i=1}^N | \varphi_i |_{1/2,\partial E}^2
\lesssim || v_L||_{L^\infty(\partial E)}^2\sum_{i=1}^N || \varphi_i ||_{H^{1/2}_{00}(\sigma_i)}^2.
\end{equation}

Recalling \eqref{h1200-norm}, direct computations show that

\begin{equation}\label{norm_i}
 | \varphi_i |_{H^{1/2}(\sigma_i)}^2\lesssim 1\qquad ; \qquad
 \int_{\sigma_i} \frac{\varphi_i(x)^2}{\rho(x)}{\rm d}\, x
 \lesssim  \log \left(1 + \frac{ \max \{ h_{i-1},h_i \} }{ \min \{ h_{i-1},h_i \} } \right) ,
\end{equation}
by which we obtain

\begin{equation}\label{eq:infty-est-6}
|| \varphi_i ||_{H^{1/2}_{00}(\sigma_i)}^2\lesssim
 \log \left(1 + \frac{ \max \{ h_{i-1},h_i \} }{ \min \{ h_{i-1},h_i \} } \right) ,
\end{equation}

Therefore, using again assumption A2 and noting that

$$
\frac{ \max \{ h_{i-1},h_i \} }{ \min \{ h_{i-1},h_i \} }
\le \frac{ h_E }{ h_{m(E)} } \qquad 1\le i\le N ,
$$
from \eqref{eq:infty-est-5} and \eqref{norm_i} we get

\begin{equation}\label{eq:infty-est-7}
|v_L|_{1/2,\partial E}^2 \lesssim
 \log  \left(1 + \frac{ h_E }{ h_{m(E)} } \right) || v_L||_{L^\infty(\partial E)}^2.
\end{equation}

Combining \eqref{eq:infty-est-2}, \eqref{eq:infty-est-4b} and \eqref{eq:infty-est-7}, we get \eqref{eq:infty-est}.

\end{proof}
}

We now have the following stability result.

\begin{thm}\label{thm-stab}
Let assumptions A1 and A2 hold. Then, for the boundary form \eqref{stab:form:classic},
and for any of the choices \eqref{eq:Rint}, \eqref{eq:Rbnd}, \eqref{eq:Rvarg} of the operator $\PR$, conditions \eqref{eq:hyp:1} and \eqref{eq:hyp:2} hold with positive constants $\widehat C_1$ and $\widehat C_2$ that satisfy
\begin{equation}\label{ident-stab}
\widehat C_1(E) \lesssim (\log{(1+h_E/h_{m(E)})})  \ , \quad \widehat C_2(E) \lesssim 1 .
\end{equation}

\end{thm}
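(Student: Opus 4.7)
The plan is to verify conditions \eqref{eq:hyp:1} and \eqref{eq:hyp:2} independently, relying on Lemma \ref{lem3} for the first and on a combination of an inverse estimate with the approximation properties of $\PR$ (Lemma \ref{lm:newlemma}) for the second. The unifying observation is that, under the identity matrix choice \eqref{stab:form:classic}, $s_E^\partial(w,w)$ coincides with the squared Euclidean norm of the vector of boundary DoFs of $w$, and such a vector is, on each edge, an equivalent norm for the restriction of $w$ (a polynomial of degree $\le k$) to that edge.

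For \eqref{eq:hyp:1}: since $\PR v_h$ is a constant on $E$ for all three options of $\PR$, we have $|v_h|_{1/2,\partial E}=|(I-\PR)v_h|_{1/2,\partial E}$. Setting $w_h:=(I-\PR)v_h \in V_E$ and applying Lemma \ref{lem3} yields
$$
|v_h|_{1/2,\partial E}^2 \lesssim \log\bigl(1+h_E/h_{m(E)}\bigr)\,\|w_h\|_{L^\infty(\partial E)}^2.
$$
Next, on every edge $e\subset\partial E$, $w_h|_e$ is a polynomial of degree $\le k$ determined by the $k+1$ Gauss-Lobatto nodal values (two vertices plus $k-1$ internal edge nodes), so by a scaling argument on the reference edge, $\|w_h\|_{L^\infty(e)}^2 \lesssim \sum_{j}(w_h(p_j^e))^2$. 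Taking the maximum over edges (counting each vertex at most twice) gives $\|w_h\|_{L^\infty(\partial E)}^2 \lesssim \sum_{i=1}^{Nk}\Xi_i^\partial(w_h)^2 = s_E^\partial((I-\PR)v_h,(I-\PR)v_h)$. This proves \eqref{eq:hyp:1} with $\widehat C_1(E)\lesssim \log(1+h_E/h_{m(E)})$; notice that the $|\Pi_E v_h|_{1,E}^2$ term on the right of \eqref{eq:hyp:1} is in fact not needed here.

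For \eqref{eq:hyp:2}: bounding each pointwise evaluation by the $L^\infty$ norm and using assumption A2 to control $Nk\lesssim 1$, we get
$$
s_E^\partial((I-\PR)p,(I-\PR)p) = \sum_{i=1}^{Nk}\Xi_i^\partial((I-\PR)p)^2 \lesssim \|(I-\PR)p\|_{L^\infty(E)}^2.
$$
Since $(I-\PR)p \in \Pp_k(E)$, the inverse estimate of Remark \ref{rm:vem-invest} (obtained by the same bubble/triangle argument of Lemma \ref{lem:lapl}) gives $\|(I-\PR)p\|_{L^\infty(E)}^2 \lesssim h_E^{-2}\|(I-\PR)p\|_{L^2(E)}^2$. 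Combining with Lemma \ref{lm:newlemma} — namely \eqref{R-approx} for the choices \eqref{eq:Rint}, \eqref{eq:Rbnd}, or the polynomial bound \eqref{R-approx-pol} for \eqref{eq:Rvarg} — we obtain $\|(I-\PR)p\|_{L^2(E)}\lesssim h_E |p|_{1,E}$, hence $s_E^\partial((I-\PR)p,(I-\PR)p)\lesssim |p|_{1,E}^2$ and $\widehat C_2(E)\lesssim 1$.

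The essential difficulty — the appearance of the logarithmic factor $\log(1+h_E/h_{m(E)})$ in \eqref{ident-stab} — is already absorbed into Lemma \ref{lem3}; once that estimate is available, the remainder of the argument reduces to the standard equivalence between the vector of boundary DoFs and the $L^\infty$ norm on $\partial E$ for piecewise polynomial traces, plus an inverse estimate to treat the polynomial case in \eqref{eq:hyp:2}.
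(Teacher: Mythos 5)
Your proposal is correct and follows essentially the same route as the paper's proof: condition \eqref{eq:hyp:1} is obtained by combining Lemma \ref{lem3} with the equivalence $\| w_h \|_{L^{\infty}(\partial E)}^2 \lesssim s_E^\partial(w_h,w_h)$ (which the paper simply cites as a standard one-dimensional polynomial result and you spell out via Gauss--Lobatto nodal values and scaling), while condition \eqref{eq:hyp:2} is obtained exactly as in the paper from A2, the $L^\infty$--$L^2$ inverse estimate of Remark \ref{rm:vem-invest}, and \eqref{R-approx} or \eqref{R-approx-pol}. Your observation that the $|\Pi_E v_h|_{1,E}^2$ term is not needed for \eqref{eq:hyp:1} is also consistent with the paper, which adds it only a posteriori.
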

\begin{proof}

Standard results for polynomials in one dimension immediately give
\begin{equation}\label{stoinf}
\| w_h \|_{L^{\infty}(\partial E)}^2 \lesssim s_E^\partial(w_h,w_h) \qquad \forall w_h\in V_E .
\end{equation}
%
%Therefore, using H\"older's inequality, we get
%\begin{equation}\label{eq:L2-est}
%\begin{aligned}
%h_E^{-1} \| v_h -\PR v_h\|_{L^2(\partial E)}^2 & \lesssim \| v_h -\PR v_h\|_{L^\infty(\partial E)}^2\\
%& \lesssim s_E^\partial\left( (I -\PR) v_h, (I -\PR) v_h\right) \qquad \forall v_h\in V_E.
%\end{aligned}
%\end{equation}
%
A combination of \eqref{stoinf} and Lemma \ref{lem3} yields:
\begin{equation}\label{eq:half-est}
\begin{aligned}
|v_h|_{1/2,\partial E}^2 & = |v_h - \PR v_h|_{1/2,\partial E}^2
\lesssim \widehat C(E)\, s_E^\partial\left( (I -\PR) v_h, (I -\PR) v_h\right) \\
&\lesssim \widehat C(E)\, \left( s_E^\partial\left( (I -\PR) v_h, (I -\PR) v_h\right)
+ |\Pi_E v_h |_{1,E}^2)\right)
\qquad \forall v_h\in V_E ,
\end{aligned}
\end{equation}
i.e. condition \eqref{eq:hyp:1} holds with $\widehat C_1(E) \lesssim (\log{(1+h_E/h_{m(E)})})$.

We now prove that estimate \eqref{eq:hyp:2} holds.
%
%
% %%%%%%%%%%%%%%%%%%%%%%%%%%%%%%%%%%
%
%
% % Combining Theorem \ref{th:main} and \ref{thm-stab} one obtains, under A1 and A2, that
% % \begin{equation}\label{sumbound}
% % (\log{(1+h_E/h_e)})^{-2} \: a_E(v,v)  \lesssim  s_E(v,v) \lesssim a_E(v,v)
% % \quad \forall v \in V_E \textrm{ with } \Pi v = 0 .
% % \end{equation}
% % Following the simple arguments in [VOLLEY+++], page XXX, from Lemma \ref{lem:A2} and bound \eqref{sumbound} we get the stability property
% % \begin{equation}\label{fin-bound}
% % (\log{(1+h_E/h_e)})^{-2} \: a_E(v,v)  \lesssim  a_E^h(v,v) \lesssim a_E(v,v) \quad \forall v \in V_E .
% % \end{equation}
% % where the above form was defined in \eqref{ah-form}.
%
%
Recalling assumption A2, it is immediate to check that
\begin{equation}\label{eq:id-infty}
s_E^\partial(v,v) \lesssim N \| v \|_{L^{\infty}(\partial E)}^2 \lesssim \| v \|_{L^{\infty}(\partial E)}^2 \qquad\forall v\in C^0(\partial E).
\end{equation}
Take any $p \in \Pp_k(E)$. We get, using bound \eqref{eq:id-infty} , an inverse estimate for polynomials (cf. Remark \ref{rm:vem-invest}), and recalling either \eqref{R-approx} or \eqref{R-approx-pol} (depending on the choice of the operator $\PR$):

\begin{equation}\label{eq:lowercond-est}
\begin{aligned}
s_E^\partial\big( (I-\PR) p , (I-\PR) p \big) &\lesssim
\| (I-\PR) p  \|_{L^{\infty}(\partial E)}^2 \lesssim \| (I-\PR) p  \|_{L^{\infty}(E)}^2 \\
& \lesssim  h_E^{-2} || (I-\PR) p||_{0,E}^2
\leq | p  |_{1,E}^2 ,
\end{aligned}
\end{equation}
i.e. condition \eqref{eq:hyp:2} holds with $\widehat C_2(E) \lesssim 1$.
\end{proof}

The following corollary shows that, even in the presence of arbitrarily small edges (provided the number of edges are uniformly bounded), the convergence rate of the Virtual Element Method is quasi-optimal, in the sense that only a logarithmic factor is lost.

\begin{cor}\label{cor:ident}
Let assumptions A1 and A2 hold. Let $u$ be the solution of problem \eqref{cont-pbl}, assumed to be in $H^s(\Omega)$, $s>1$. Let $u_h$ be the solution of the discrete problem \eqref{discr-pbl}. Then it holds
$$
\| u - u_h \|_{1,\Omega} \lesssim c(h) \: h^{s-1} |u|_{s,\Omega} \qquad 1 < s \le k+1 .
$$
with
$$
c(h) = \max_{E \in \Omega_h} \left(\log{(1+h_E/h_{m(E)})}\right) .
$$
If the stronger assumption A3 holds, then clearly $c(h)\lesssim 1$.
\end{cor}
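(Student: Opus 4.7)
The plan is to combine the abstract convergence estimate of Theorem \ref{teoconv} with the concrete stability bounds of Theorem \ref{thm-stab} and with the approximation results already established for the interpolants $u_I$ and $u_\pi$.

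First I would compute the constant $C_{err}(h)$ appearing in \eqref{teoconv:eq}. Theorem \ref{thm-stab} gives, under A1 and A2 with the identity-matrix choice, $\widehat C_1(E)\lesssim \log(1+h_E/h_{m(E)})$ and $\widehat C_2(E)\lesssim 1$; Propositions \ref{prop:upperbound} and \ref{prop:lowerbound} then yield $C_1(E)\lesssim \log(1+h_E/h_{m(E)})$ and $C_2(E)\lesssim 1$. Lemma \ref{lemma-1} provides $C^\star(E)\lesssim \log(1+h_E/h_{m(E)})$ and $C_\star(E)\gtrsim 1$. Taking maxima over the mesh family gives $\widetilde C(h)\lesssim 1$, $C_1(h)\lesssim c(h)$ and $C^\star(h)\lesssim c(h)$, whence
$$
C_{err}(h) \;=\; \max\bigl\{1,\; \widetilde C(h)\,C_1(h),\; \widetilde C(h)^{3/2}\sqrt{C^\star(h)\,C_1(h)}\,\bigr\} \;\lesssim\; c(h).
$$

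Next I would bound each of the five terms on the right-hand side of \eqref{teoconv:eq} by $h^{s-1}|u|_{s,\Omega}$. The interpolation errors $|u-u_I|_{1,\Omega}$ and $|u-u_\pi|_{1,h}$ are controlled by Theorem \ref{thm:approx} with $\sigma=1$ and by classical polynomial approximation on star-shaped polygons (available under A1), respectively. The loading contribution $\mathfrak{F}_h$ is handled by \eqref{defrhs-ter}, since $s-1\le k$. For $\tri u-u_I\tri_E$ I would use the defining property $\Xi_i(u-u_I)=0$ of the DoF interpolant: under the identity-matrix choice \eqref{stab:form:classic} the quantity $s_E((I-\PR)(u-u_I),(I-\PR)(u-u_I))$ collapses to a uniformly bounded multiple of $|\PR(u-u_I)|^2$, which for each of the three admissible operators \eqref{eq:Rint}--\eqref{eq:Rvarg} is controlled by $h_E^{s-1}|u|_{s,E}$ via Lemma \ref{lm:newlemma} together with Theorem \ref{thm:approx}; the remaining piece $a_E(\Pi_E(u-u_I),\Pi_E(u-u_I))$ is trivially dominated by $|u-u_I|_{1,E}^2$. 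For $\tri u-u_\pi\tri_E$, I would proceed analogously by expanding $\Xi_i(u-u_\pi)=\Xi_i(u)-\Xi_i(u_\pi)$ and estimating each DoF: the boundary point-value DoFs are controlled by $\|u-u_\pi\|_{L^\infty(E)}\lesssim h_E^{s-1}|u|_{s,E}$ through a Sobolev embedding (valid since $s>1$), whereas the internal moment DoFs are controlled by $|E|^{-1/2}\|u-u_\pi\|_{0,E}\lesssim h_E^{s-1}|u|_{s,E}$. Thanks to A2 the number of DoFs per element is uniformly bounded, so summation does not spoil the rate.

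Plugging these bounds into \eqref{teoconv:eq} yields $|u-u_h|_{1,\Omega}\lesssim c(h)\,h^{s-1}|u|_{s,\Omega}$, which is the claimed estimate. Under the stronger assumption A3, every edge satisfies $h_e\ge\eta h_E$, so the ratios $h_E/h_{m(E)}$ are uniformly bounded and $c(h)\lesssim 1$. I expect the technical heart of the argument to be the control of $\tri u-u_I\tri$: one must verify that no additional logarithmic degradation is picked up from the stability semi-norm itself, because that factor is already absorbed into $C_{err}(h)$. It is precisely the vanishing of the DoFs on $u-u_I$ that allows this, reducing the $s_E$-contribution to the simpler quantity $|\PR(u-u_I)|^2$.
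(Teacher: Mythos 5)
Your overall architecture coincides with the paper's: you derive $C_1(E)\lesssim\log(1+h_E/h_{m(E)})$ and $C_2(E)\lesssim 1$ from Theorem \ref{thm-stab} through Propositions \ref{prop:upperbound} and \ref{prop:lowerbound}, conclude $C_{err}(h)\lesssim c(h)$, and then estimate the five terms of \eqref{teoconv:eq}; the bounds for $\mathfrak{F}_h$, $|u-u_I|_{1,\Omega}$ and $|u-u_\pi|_{1,h}$ are exactly those of the paper. Where you genuinely diverge is the term $\tri u-u_I\tri$. The paper bounds $s_E^\partial\big((I-\PR)(u-u_I),(I-\PR)(u-u_I)\big)$ by $\|(I-\PR)(u-u_I)\|_{L^\infty(\partial E)}^2$ and then uses the embedding $H^{1/2+\varepsilon}(\partial E)\subset C^0(\partial E)$, a scaled trace inequality, and the first-order property $\|(I-\PR)v\|_{0,E}\lesssim h_E|v|_{1,E}$, so that only the $\sigma=1$ and $\sigma=1+\varepsilon$ cases of Theorem \ref{thm:approx} are ever needed. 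You instead invoke $\Xi_i(u-u_I)=0$ to collapse the entire stabilization contribution to a bounded multiple of $|\PR(u-u_I)|^2$, which is an attractive shortcut.

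The gap is in the next step: controlling $|\PR(u-u_I)|$ ``via Lemma \ref{lm:newlemma} together with Theorem \ref{thm:approx}'' does not go through as stated. Lemma \ref{lm:newlemma} estimates $\|v-\PR v\|_{0,E}$, not the constant $\PR v$ itself; extracting $|\PR(u-u_I)|\lesssim h_E^{s-1}|u|_{s,E}$ from it (say through $|E|^{1/2}|\PR w|\le \|w\|_{0,E}+\|w-\PR w\|_{0,E}$) leaves you needing $\|u-u_I\|_{0,E}\lesssim h_E^{s}|u|_{s,E}$, a $\sigma=0$ estimate that Theorem \ref{thm:approx} (stated only for $\sigma\ge 1$) does not supply. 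The step is repairable, but by different means: for the choice \eqref{eq:Rvarg} one has $\PR(u-u_I)=0$ exactly, since the vertex values of $u-u_I$ vanish, and similarly for \eqref{eq:Rint} with $k\ge 2$ when the constant belongs to the basis $\{m_i\}$; only for \eqref{eq:Rbnd} is a genuine estimate required, and there one is forced back to an $L^\infty(\partial E)$/trace argument of precisely the kind the paper employs. A further caveat: the function $u_I$ actually built in the proof of Theorem \ref{thm:approx} has boundary trace equal to a Lagrange interpolant on a sub-triangulation and Laplacian equal to $\Delta u_\pi$, so it is not literally the DoF interpolant; since Theorem \ref{teoconv} allows any $u_I\in V_h$, your argument requires you to state explicitly that you take $u_I$ to be the DoF interpolant and that the estimate \eqref{eq:chile-bis} is available for that choice. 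Your treatment of $\tri u-u_\pi\tri$, where the DoFs do not vanish and you estimate them through $\|u-u_\pi\|_{L^\infty(E)}$ and the internal moments through $|E|^{-1/2}\|u-u_\pi\|_{0,E}$, is sound.
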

\begin{proof}
% The proof easily follows by noting that:
%
% \begin{enumerate}
%  %
%  \item Theorem \ref{thm-stab} allows to apply Propositions \ref{prop:upperbound}
%  and \ref{prop:lowerbound}. Therefore, assumptions \eqref{ass-1} and \eqref{ass-2} hold
%  with $C_1\lesssim (\log{(1+h_E/h_{m(E)})})^2$ and $C_2\lesssim 1$, respectively.
%  %
%  \item
%  %
% \end{enumerate}

% The proof follows from the stability result \eqref{fin-bound} and Theorem XXX in [Volley+++], combined with the approximation results for virtual interpolants in Theorem \ref{thm:approx} and standard polynomial approximation on polygons [Scott-Dupont+++].

Theorem \ref{thm-stab} allows to apply Propositions \ref{prop:upperbound}
 and \ref{prop:lowerbound}. Therefore, assumptions \eqref{ass-1} and \eqref{ass-2} hold
 with $C_1(E)\lesssim \log{(1+h_E/h_{m(E)})}$ and $C_2(E)\lesssim 1$, respectively.
Then, Theorem \ref{teoconv} can be invoked; a look at the constants shows that
$$
C_{err}(h) \lesssim 1 + \max_{E \in \Omega_h} \{ C_1(E) \} \lesssim c(h) .
$$
We now estimate the terms in the right-hand side of \eqref{teoconv:eq}.
We first recall \eqref{defrhs-ter}:
\begin{equation}\label{eq:id-00}
\mathfrak{F}_h \lesssim h^k .
\end{equation}

Moreover, Theorem \ref{thm:approx} shows that

\begin{equation}\label{eq:id-3}
 | u-u_I |_{1,\Omega}
 \lesssim \Big(\sum_{E\in\Th} h_E^{2s-2}| u |_{s,E}^2\Big)^{1/2}
 \lesssim h^{s-1} | u |_s \qquad 1 < s\le k+1 .
\end{equation}
while standard approximation results on polygons (see for instance \cite{scott-dupont}) yield
\begin{equation}\label{eq:id-4}
 \big( \sum_{\E\in\Th} |u-\up|_{1,\E}^2\big)^{1/2}
 \lesssim \Big(\sum_{E\in\Th} h_E^{2s-2}| u |_{s,E}^2\Big)^{1/2}
 \lesssim h^{s-1} | u |_s  \qquad 1 < s\le k+1 .
\end{equation}
We now look into the term $\tri u-u_I \tri$.
From \eqref{st-term}, \eqref{st-seminorm} and \eqref{st-norm}, we deduce that we need to estimate:

\begin{enumerate}

\item the term

\begin{equation}\label{eq:pesti}
 a_E\big(\Pi_E(u-u_I), \Pi_E(u-u_I) \big) \ ;
\end{equation}

\item the term
\begin{equation}\label{eq:id-5}
\begin{aligned}
 s_E^\partial((I-\PR)(u-u_I),(I-\PR)(u-u_I))
 =  \sum_{i=1}^{Nk} \Xi_i^\partial((I-\PR)(u-u_I))^2 \ ;
\end{aligned}
\end{equation}

\item the term, see \eqref{stab-int-bound},

\begin{equation}\label{eq:id-6}
 s_E^\circ((I-\PR)(u-u_I),(I-\PR)(u-u_I))
 =  \sum_{i=1}^{k(k-1)/2} \Xi_i^\circ((I-\PR)(u-u_I))^2\ .
\end{equation}
\end{enumerate}

Clearly, if choice \eqref{no-int-stab} is used instead of \eqref{stab-int-bound}, this last term vanishes.
Take $s$ with $1<s<k+1$, and $\varepsilon$ such that $0<\varepsilon < \min\{1/2, s-1 \}$.
Regarding \eqref{eq:pesti}, we notice that from the continuity of $\Pi_E$ and from Theorem \ref{thm:approx} with $\sigma=1$, it holds

\begin{equation}\label{eq:pesti2}
 a_E\big(\Pi_E(u-u_I), \Pi_E(u-u_I) \big) \lesssim |u-u_I|_{1,E}^2\lesssim  h^{2s-2}_E|u|_{s,E}^2 .
\end{equation}

Now, the Sobolev embedding $H^{1/2+\varepsilon}(\partial E) \subset C^0(\partial E)$ shows that it holds:

\begin{equation}\label{eq:sob-emb}
 || v ||_{L^\infty(\partial E)} \lesssim  h_E^{-1/2} ||v||_{0,\partial E}
+  h_E^{\varepsilon} |v|_{1/2+\varepsilon,\partial E}   \qquad \forall v\in H^s(E).
\end{equation}
A scaled trace inequality, that can be derived by an argument analogous to that in Lemma \ref{lem:tr:um}, gives
\begin{equation}\label{eq:tr-in}
h_E^{-1/2} ||v||_{0,\partial E}  +  h_E^{\varepsilon} |v|_{1/2+\varepsilon,\partial E}
\lesssim   h_E^{-1} ||v||_{0,E}  +  h_E^{\varepsilon} |v|_{1+\varepsilon, E}  \qquad \forall v\in H^s(E) .
\end{equation}
Therefore, \eqref{eq:id-infty}, \eqref{eq:sob-emb} and \eqref{eq:tr-in} yield

\begin{equation}\label{eq:int-id-1}
s_E^\partial(v,v)
\lesssim   h_E^{-2} ||v||_{0,E}^2  +  h_E^{2\varepsilon} |v|_{1+\varepsilon, E}^2  \qquad \forall v\in H^s(E).
\end{equation}
Choosing $v=(I-\PR)(u-u_I)_{|E}$ in \eqref{eq:int-id-1}, using \eqref{R-approx} or \eqref{R-approxh1} (depending on the choice of the operator $\PR$) and noting
that $ |(I-\PR)(u-u_I)|_{1+\varepsilon, E}= |u-u_I|_{1+\varepsilon, E}$, we obtain:
\begin{equation}\label{eq:int-id-2}
s_E^\partial((I-\PR)(u-u_I),(I-\PR)(u-u_I))
\lesssim  |u-u_I|_{1, E}^2 + h_E^{2\varepsilon} |u-u_I|_{1+\varepsilon, E}^2 .
\end{equation}
An application of Theorem \ref{thm:approx} with $\sigma=1$ (resp., $\sigma=1+\varepsilon$) in the first (resp., second) term of the right-hand side of \eqref{eq:int-id-2} leads to:

\begin{equation}\label{eq:int-id-3}
s_E^\partial((I-\PR)(u-u_I),(I-\PR)(u-u_I))
\lesssim   h^{2s-2}_E|u|_{s,E}^2   .
\end{equation}
We now notice that bound \eqref{X-newnew} applies also to $(u-u_I)_{|E}$, and not only to polynomials $p\in \Pp_k$.
Therefore, by using again \eqref{R-approx} or \eqref{R-approxh1} (depending on the choice of $\PR$) and Theorem \ref{thm:approx} one easily gets
\begin{equation}\label{eq:id-9}
 s_E^\circ((I-\PR)(u-u_I),(I-\PR)(u-u_I))
\lesssim h_E^{-2} \| (I-\PR)(u-u_I) \|_{0,E}^2
\lesssim |u-u_I|_{1,E}^2 \lesssim h^{2s-2}_E |u|_{s,E}^2 .
\end{equation}
Combining \eqref{eq:pesti2}, \eqref{eq:int-id-3} and \eqref{eq:id-9}, we get
\begin{equation}\label{eq:id-10}
 \tri u-u_I \tri
 \lesssim \Big( \sum_{E\in \Th} h^{2s-2}_E |u|_{s,E}^2 \Big)^{1/2} \lesssim h^{s-1} |u|_{s}
 \qquad 1 < s\le k+1 .
\end{equation}

By following the same steps and using standard approximation results on polygons (see for instance \cite{scott-dupont}), we get

\begin{equation}\label{eq:id-11}
 \tri u-u_\pi \tri
 \lesssim \Big( \sum_{E\in \Th} h^{2s-2}_E |u|_{s,E}^2 \Big)^{1/2} \lesssim h^{s-1} |u|_{s}
 \qquad 1 < s\le k+1 .
\end{equation}

We conclude by collecting estimates \eqref{eq:id-00}, \eqref{eq:id-10}, \eqref{eq:id-11},
\eqref{eq:id-3} and \eqref{eq:id-4}.
\end{proof}

% ---------------------------------------------------------------------------
\subsubsection{A ``classical'' stability bound}\label{ssec:memo}
% ---------------------------------------------------------------------------
%%%%%
We close this part on the identity matrix choice by showing that the classical stability result of \cite{volley}, see equation (3.7) of \cite{volley}, can also be proved under the more general mesh assumptions considered in this paper. This result could be used to prove the same error estimate as in Corollary \ref{cor:ident} without resorting to the approach described in this paper, but simply applying the standard theory of \cite{volley}.
We need an additional preliminary lemma.
\begin{lem}\label{lem:ani-trace}
Let assumption A1 hold. We have
\begin{equation}\label{B1}
h_E^{-1} \| v \|_{0,\partial E}^2 \lesssim h_E^{-2} \| v \|_{0,E}^2 + |v|_{1,E}^2  \qquad  \forall E \in \Omega_h ,
\end{equation}
and for all $v$ in $H^1(E)$.
\end{lem}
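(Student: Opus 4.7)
My plan is to use a direct divergence-theorem argument based on the vector field $\boldsymbol{\phi}(\bx) := \bx - \bx_E$, where $\bx_E$ is the center of the ball $B_E$ from assumption A1. The key geometric input is that star-shapedness of $E$ with respect to $B_E$ implies
$$
(\bx - \bx_E) \cdot \bbn_E(\bx) \ge \rho_E \ge \gamma h_E \quad \text{for a.e. } \bx \in \partial E,
$$
while obviously $|\boldsymbol{\phi}(\bx)| \le h_E$ on $\overline{E}$ and $\mathrm{div}\,\boldsymbol{\phi} = 2$.

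By density of smooth functions in $H^1(E)$ (recall $E$ is Lipschitz under A1), it suffices to argue for $v \in C^\infty(\overline E)$. The divergence theorem applied to $v^2 \boldsymbol{\phi}$ gives
$$
\int_{\partial E} v^2 \,(\boldsymbol{\phi} \cdot \bbn_E)\, ds = \int_E \mathrm{div}(v^2 \boldsymbol{\phi})\, d\bx = 2\int_E v^2\, d\bx + 2\int_E v\, (\nabla v \cdot \boldsymbol{\phi})\, d\bx.
$$
Using the lower bound on $\boldsymbol{\phi} \cdot \bbn_E$ on the left, and the upper bound $|\boldsymbol{\phi}| \le h_E$ together with Cauchy--Schwarz and Young's inequality on the right, I obtain
$$
\gamma h_E \int_{\partial E} v^2\, ds \le 3\|v\|_{0,E}^2 + h_E^2 |v|_{1,E}^2.
$$

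Finally, dividing by $\gamma h_E$ and reorganizing yields
$$
h_E^{-1}\|v\|_{0,\partial E}^2 \lesssim h_E^{-2}\|v\|_{0,E}^2 + |v|_{1,E}^2,
$$
which is the claimed estimate \eqref{B1}. No step presents a real obstacle; the only thing to highlight is that the uniformity of the hidden constant follows from assumption A1 through the uniform lower bound $\rho_E \ge \gamma h_E$. An alternative route would be a pull-back to the ball $B_E$ via the radial mapping $F$ of Lemma \ref{lem:tr:um} and use of the standard scaled trace inequality on a ball, but the divergence-theorem argument above is more direct and avoids invoking the mapping.
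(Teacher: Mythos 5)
Your argument is correct, and it is a genuinely different route from the one in the paper. The paper proves \eqref{B1} edge by edge: for each edge $e$ it takes the associated triangle $T$ of the sub-triangulation with apex ${\bf x}_E$, maps it by an explicit anisotropic affine map onto a reference triangle that keeps $e$ fixed, applies the scaled trace inequality there, and tracks how the (possibly very skew) Jacobian acts on the two partial derivatives; the uniform bounds $x_E\simeq h_E$, $y_E\lesssim h_E$ coming from A1 are what keep the constants under control despite $h_e\ll h_E$. Your proof instead is a global multiplier (Rellich-type) identity: the divergence theorem applied to $v^2({\bf x}-{\bf x}_E)$, combined with the standard geometric fact that star-shapedness with respect to $B_E$ gives $({\bf x}-{\bf x}_E)\cdot{\bf n}_E\ge\rho_E\ge\gamma h_E$ a.e.\ on $\partial E$, while $|{\bf x}-{\bf x}_E|\le h_E$ and $\mathrm{div}({\bf x}-{\bf x}_E)=2$. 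All steps check out (the Young inequality bookkeeping gives the constant $\max\{3,1\}/\gamma$, and the density of $C^\infty(\overline E)$ in $H^1(E)$ plus continuity of the trace justifies the reduction to smooth $v$). What each approach buys: yours is shorter, avoids constructing and inverting the affine maps, and yields a fully explicit constant depending only on $\gamma$; the paper's edge-local scaling argument is heavier but localizes the estimate to individual edges and their triangles, which is the kind of information one sometimes wants when edges degenerate independently. For the global statement \eqref{B1} as written, your proof is entirely adequate.
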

\begin{proof}
The simple proof is based on an anisotropic scaling argument. Take an edge $e \in \partial E$, and let $T \in {\cal T}_h$ be the associated triangle (see the proof of Theorem \ref{thm:approx}). By a rotation and translation of the cartesian $(x,y)$-coordinates, it is not restrictive to assume that $e = \{0\} \times [- h_e/2 , h_e/2]$, and that the center of the ball, see assumption A1, ${\bf x}_E = (x_E, y_E)$ satisfies $x_E \ge 0$. As a consequence of assumption A1, it is easy to check that $x_E \lesssim h_E$, $y_E \lesssim h_E$ and that the ball $B_E$ is contained in the half plane $\{ (x,y) \in {\mathbb R}^2 : x \ge 0 \}$.
Therefore, we also have $x_E \gtrsim h_E$.
Let now $\hat T$ be the triangle of vertexes $(0,h_e/2), \: (0, -h_e/2), \: (h_e/2,0)$. We now consider the unique affine mapping $F : T \rightarrow \hat T$ that leaves the edge $e$ (and its orientation) unchanged: $\hat e := F(e)=e$.
By an explicit computation of $F$ and its inverse $F^{-1}$, we get the Jacobian matrices
$$
DF = \begin{pmatrix} h_e/x_E  & 0 \\ - y_E/x_E & 1 \end{pmatrix} \ , \quad
DF^{-1} = \begin{pmatrix} x_E/h_e  & 0 \\ y_E/h_e & 1 \end{pmatrix} .
$$
The proof now follows by a scaling argument. Indeed, denoting $\hat v = v \circ F^{-1}$, well known (scaled) trace estimates on $\hat T$ and a simple change of variables give
$$
\begin{aligned}
& \| v \|_{0,e}^2 = \| \hat v \|_{0,\hat e}^2 \lesssim h_e^{-1} \| \hat v \|_{0,\hat T}^2 + h_e | \hat v |_{1,\hat T}^2 \\
& \lesssim h_e^{-1} \frac{h_e}{x_E} \| v \|_{0,T}^2
+  h_e \frac{h_e}{x_E} \left( (\frac{x_E}{h_e})^2 + (\frac{y_E}{h_e})^2 \right) \| \frac{\partial v}{\partial x} \|_{0,T}^2
+  h_e \frac{h_e}{x_E} \| \frac{\partial v}{\partial y} \|_{0,T}^2 .
\end{aligned}
$$
By recalling the upper and lower bounds on $(x_E,y_E)$, the above estimate yields
$$
\| v \|_{0,e}^2 \lesssim h_E^{-1} \| v \|_{0,T}^2 + h_E \| \nabla v \|_{0,T}^2 ,
$$
that immediately implies \eqref{B1} by summing over all $e \in \partial E$.
\end{proof}

{
% ---
\begin{prop}\label{prop:class-stab}
Let assumptions A1 and A2 hold. Then, for any of the choices \eqref{eq:Rint}, \eqref{eq:Rbnd}, \eqref{eq:Rvarg} of the operator $\PR$, it holds
\begin{equation}\label{boh}
s_E(v_h,v_h) \lesssim a_E(v_h,v_h) \lesssim c(h) \: s_E(v_h,v_h) \quad \forall v_h \textrm{ with } \Pi_E v_h = 0 ,
\end{equation}
where
$$
c(h) = \max_{E \in \Omega_h} \log{(1+h_E/h_{m(E)})} .
$$
Note that if the stronger assumption A3 holds, then clearly $c(h)\lesssim 1$.
\end{prop}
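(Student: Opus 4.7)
The plan is to split the equivalence into its two directions and to exploit the hypothesis $\Pi_E v_h=0$, which in particular (by the third condition in \eqref{proj}) implies $\PR v_h=0$, so that $v_h$ coincides with $(I-\PR)v_h$ throughout the argument.

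For the easy direction $s_E(v_h,v_h)\lesssim a_E(v_h,v_h)$ I would proceed as in the proof of Proposition \ref{prop:lowerbound}, but replacing polynomials with generic $v_h\in V_E$. First, for the boundary contribution, \eqref{eq:id-infty} gives $s_E^\partial(v_h,v_h)\lesssim \|v_h\|_{L^\infty(\partial E)}^2$; Lemma \ref{lem:A2} then bounds the latter by $h_E^{-1}\|v_h\|_{0,\partial E}^2 + |v_h|_{1/2,\partial E}^2$, and Lemmas \ref{lem:ani-trace} and \ref{lem:tr:um} reduce everything to $h_E^{-2}\|v_h\|_{0,E}^2+|v_h|_{1,E}^2$. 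Since $\PR v_h=0$, Lemma \ref{lm:newlemma} (in either the integral or boundary version, and in the vertex version under A2) yields $\|v_h\|_{0,E}\lesssim h_E|v_h|_{1,E}$, collapsing the first term into the second. For the internal contribution the same Poincaré-type estimate together with the computation \eqref{X-newnew} gives $s_E^\circ(v_h,v_h)\lesssim h_E^{-2}\|v_h\|_{0,E}^2\lesssim|v_h|_{1,E}^2$. Summing the two bounds produces $s_E(v_h,v_h)\lesssim|v_h|_{1,E}^2\lesssim a_E(v_h,v_h)$.

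For the harder direction $a_E(v_h,v_h)\lesssim c(h)\,s_E(v_h,v_h)$, I would revisit the integration-by-parts identity \eqref{general} used in Proposition \ref{prop:upperbound}, taking $\bar v_h$ equal to the mean of $v_h$ on $\partial E$ and $\widetilde p\in\Pp_k(E)$ as in Lemma \ref{pol-delta}. The crucial observation is that the volume term $\int_E\nabla\Pi_E v_h\cdot\nabla\widetilde p$ now vanishes identically because $\Pi_E v_h=0$. Hence only the boundary term survives and, bounding it exactly as in \eqref{use-12}--\eqref{II-bound} (using Corollary \ref{Hdiv-trace} together with Lemmas \ref{pol-delta} and \ref{lem:lapl} to control $|v_h-\widetilde p|_{1,E}$ by $|v_h|_{1,E}$, and one-dimensional Poincaré on $\partial E$ to absorb $h_E^{-1/2}\|v_h-\bar v_h\|_{0,\partial E}$ into $|v_h|_{1/2,\partial E}$), one divides through by $|v_h|_{1,E}$ to obtain
\begin{equation*}
|v_h|_{1,E}\lesssim |v_h|_{1/2,\partial E}.
\end{equation*}

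Finally I would insert the key logarithmic estimate: by Lemma \ref{lem3} followed by \eqref{stoinf},
\begin{equation*}
|v_h|_{1/2,\partial E}^2\lesssim \log\!\bigl(1+h_E/h_{m(E)}\bigr)\,\|v_h\|_{L^\infty(\partial E)}^2\lesssim \log\!\bigl(1+h_E/h_{m(E)}\bigr)\,s_E^\partial(v_h,v_h),
\end{equation*}
which combined with the previous display gives $a_E(v_h,v_h)\lesssim c(h)\,s_E(v_h,v_h)$. The main obstacle is precisely the boundary reduction encoded in Lemma \ref{lem3}: this is the step that introduces the logarithmic factor and that cannot be avoided without strengthening assumption A2 to A3, under which the factor collapses to a constant. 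Everything else is a routine assembly of the preliminary lemmas proved in Sections \ref{sec:techres} and \ref{ssec:boundaryred}.
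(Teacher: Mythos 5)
Your proposal is correct and follows essentially the same route as the paper: the first inequality is obtained exactly as in the paper's proof (splitting $s_E$ into internal and boundary parts, using \eqref{eq:id-infty}, Lemmas \ref{lem:A2}, \ref{lem:tr:um}, \ref{lem:ani-trace}, and then Lemma \ref{lm:newlemma} with $\PR v_h=0$), while for the second inequality you merely unpack inline what the paper obtains by citing Proposition \ref{prop:upperbound} and Theorem \ref{thm-stab}, namely that \eqref{ass-1} holds with $C_1(E)\lesssim\log(1+h_E/h_{m(E)})$ and that $\tri v_h\tri_E^2$ reduces to $s_E(v_h,v_h)$ when $\Pi_E v_h=0$. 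No gaps.
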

\begin{proof}
Theorem \ref{thm-stab} combined with Proposition \ref{prop:upperbound} gives the validity of \eqref{ass-1} with constant $C_1(E) \lesssim c(h)$. Since $\Pi_E v_h = 0$ implies $\PR v_h=0$, bound \eqref{ass-1} yields the second inequality in \eqref{boh}.

We now show the other bound. We first consider $s_E^\circ( v_h , v_h )$ and notice that there is nothing to estimate if the choice \eqref{no-int-stab} has been done. Therefore, we focus on the choice \eqref{stab-int-bound}.
By definition, the Holder inequality and recalling $\| m_i \|_{L^\infty} \lesssim 1$, we get the estimate:
%%
% Note that the arguments in the proof of Proposition \ref{prop:lowerbound} can be applied identically also in the case of a general function $v_h \in \VE$, there is no need to have a polynomial. Therefore we obtain
%%
\begin{equation}\label{st:prop:int}
s_E^\circ\big( v_h , v_h \big) = \sum_i^{n_{k-2}} \Xi_i^\circ\big( v_h \big)^2 = \sum_i^{n_{k-2}}  |E|^{-2} \Big( \int_E   v_h \: \ m_i \Big)^2
\lesssim \sum_i^{n_{k-2}}  |E|^{-1} \|  v_h \|_{L^2(E)}^2 ,
\end{equation}
for all $v_h \in \VE$.

Regarding the term $s_E^\partial( v_h , v_h )$, due to assumption A2, it is immediate to derive
$$
s_E^\partial\big( v_h , v_h \big) \lesssim \| v_h \|_{L^\infty(\partial E)}^2
$$
that, using Lemmas \ref{lem:A2}, \ref{lem:tr:um} and \ref{lem:ani-trace} yields
\begin{equation}\label{XX-bnd}
s_E^\partial\big( v_h , v_h \big) \lesssim h_E^{-2} \|  v_h \|_{L^2(E)}^2
+ | v_h |_{H^1( E)}^2.
\end{equation}
Since $\Pi v_h = 0$ implies $\PR v_h=0$, either bound \eqref{R-approx} (for the choices \eqref{eq:Rint} and \eqref{eq:Rbnd}), or bound \eqref{R-approxh2} (for the choice \eqref{eq:Rvarg}), yields
\begin{equation}\label{XX-eq}
h_E^{-2} \|  v_h \|_{L^2(E)}^2 \lesssim | v_h |_{H^1( E)}^2 .
\end{equation}
The first bound in \eqref{boh} now follows from combining \eqref{st:prop:int}, \eqref{XX-bnd} and \eqref{XX-eq} and noting that $|v_h|_{1,E}^2\lesssim a_E(v_h,v_h)$.
\end{proof}
}

% ----------------------------------------------------------------------------------------------
\subsection{A stabilization based on boundary derivatives}
\label{ssec:3:2}
% ----------------------------------------------------------------------------------------------

We now analyze a different choice for the boundary part of the stabilization term, namely the one given by (cf. \cite{wriggers}):
\begin{equation}\label{wriggers_ch}
s_E^\partial(v_h,w_h) = h_E \int_{\partial E} \partial_s v_h \: \partial_s w_h \: {\rm d}s
\qquad \forall v_h, w_h \in V_E .
\end{equation}
We highlight that, contrary to the identity matrix stabilization presented in section \ref{ssec:3:1}, the standard approach of \cite{volley} applied to \eqref{wriggers_ch}, would lead to a strongly suboptimal result in the presence of small edges.
Indeed, the term \eqref{wriggers_ch} can be bounded by the $H^1$ semi-norm only with a constant $\alpha^* \simeq h_E/h_{m(E)}$ (cf. the second bound in equation (3.7) of \cite{volley}).
{\em Instead, with the present analysis we can obtain uniform bounds only making use of assumption A1.}
In fact, we have the following result.

{
\begin{thm}\label{thm-stab-wrg}
Let assumption A1 hold. Then, for the boundary form \eqref{wriggers_ch}, and for any of the choices \eqref{eq:Rint}, \eqref{eq:Rbnd} and \eqref{eq:Rvarg} of the operator $\PR$, conditions \eqref{eq:hyp:1} and \eqref{eq:hyp:2} hold with positive constants $\widehat C_1$ and $\widehat C_2$ that satisfy
\begin{equation}\label{ident-stab-wrg}
\widehat C_1(E) \lesssim 1  \ , \quad \widehat C_2(E) \lesssim 1 .
\end{equation}

\end{thm}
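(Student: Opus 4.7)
The plan is to exploit two simplifying facts common to all three choices \eqref{eq:Rint}--\eqref{eq:Rvarg} of the operator $\PR$: first, $\PR v$ is a constant function on $E$, so $\partial_s(I-\PR)v \equiv \partial_s v$ pointwise along $\partial E$; second, the $H^{1/2}(\partial E)$ seminorm is insensitive to constants. Together they reduce both \eqref{eq:hyp:1} and \eqref{eq:hyp:2} to purely one-dimensional estimates on $\partial E$, supplemented for \eqref{eq:hyp:2} by a polynomial trace/inverse estimate on $E$. In particular, for \eqref{eq:hyp:1} I will prove the sharper inequality $|v_h|_{1/2,\partial E}^2 \lesssim s_E^\partial((I-\PR)v_h,(I-\PR)v_h)$, discarding the $|\Pi_E v_h|_{1,E}^2$ term altogether.

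For \eqref{eq:hyp:2}, I would start from the pointwise bound $|\partial_s p| = |\nabla p\cdot\tau| \le |\nabla p|$ on $\partial E$, reducing the claim to $h_E\,\|\nabla p\|_{0,\partial E}^2 \lesssim |p|_{1,E}^2$. Since $\nabla p \in \Pp_{k-1}(E)^2$, I would apply the anisotropic trace bound of Lemma \ref{lem:ani-trace} (which is valid under A1) componentwise and then use the polynomial inverse inequality $|q|_{1,E} \lesssim h_E^{-1}\|q\|_{0,E}$ (Remark \ref{rm:vem-invest}) to absorb the $|\nabla p|_{1,E}$ term. This yields $\|\nabla p\|_{0,\partial E}^2 \lesssim h_E^{-1}\|\nabla p\|_{0,E}^2$, which is exactly the required bound and gives $\widehat C_2(E)\lesssim 1$.

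For \eqref{eq:hyp:1}, the strategy is to subtract the boundary mean $\bar v_h := |\partial E|^{-1}\int_{\partial E} v_h$ (which, being a constant, leaves the seminorm unchanged) and then chain two classical one-dimensional bounds on $\partial E$: the interpolation inequality $|w|_{1/2,\partial E}^2 \lesssim \|w\|_{0,\partial E}\,\|\partial_s w\|_{0,\partial E}$, whose constant is scale-invariant in one dimension (as can be seen by stretching the arc-length parameter), together with the Poincar\'e estimate $\|v_h-\bar v_h\|_{0,\partial E} \lesssim |\partial E|\,\|\partial_s v_h\|_{0,\partial E}$. Applied to $w=v_h-\bar v_h$ and combined, they give $|v_h|_{1/2,\partial E}^2 \lesssim |\partial E|\,\|\partial_s v_h\|_{0,\partial E}^2$.

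The only further geometric input needed is $|\partial E|\lesssim h_E$, which I anticipate to be the main technical check. This is exactly where assumption A1 enters: the radial parametrization $\gamma(\theta)=(\Psi(\theta)\cos\theta,\Psi(\theta)\sin\theta)$ used in the proof of Lemma \ref{lem:tr:um} satisfies $\Psi \in W^{1,\infty}[0,2\pi)$ with a bound uniform in $E\in\Omega_h$, so $|\partial E| = \int_0^{2\pi}\!\sqrt{\Psi^2+\Psi'^2}\,d\theta \lesssim h_E$. Inserting this into the previous display gives $|v_h|_{1/2,\partial E}^2 \lesssim h_E\,\|\partial_s v_h\|_{0,\partial E}^2 = s_E^\partial((I-\PR)v_h,(I-\PR)v_h)$, i.e.\ $\widehat C_1(E)\lesssim 1$. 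Crucially, no restriction on the edge lengths (A2 or A3) is invoked, which accounts for the uniform behavior of the boundary-derivative stabilization in the presence of small edges.
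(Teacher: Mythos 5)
Your proof is correct and follows essentially the same route as the paper's: both reduce everything to one-dimensional estimates on $\partial E$ using only A1, prove the sharper form of \eqref{eq:hyp:1} without the $|\Pi_E v_h|_{1,E}^2$ term via $|v_h|_{1/2,\partial E}^2\lesssim h_E\,|v_h|_{1,\partial E}^2$, and handle \eqref{eq:hyp:2} by bounding $\partial_s p$ through $\nabla p$ on $\partial E$ and then invoking a polynomial inverse estimate. The only cosmetic differences are that you justify the $H^{1/2}$ bound explicitly via interpolation plus the Wirtinger--Poincar\'e inequality and $|\partial E|\lesssim h_E$ (the paper simply asserts the inequality), and that for \eqref{eq:hyp:2} you route through the $L^2$ trace estimate of Lemma \ref{lem:ani-trace} where the paper passes through $\|\nabla (I-\PR)p\|_{L^\infty(\partial E)}\le\|\nabla (I-\PR)p\|_{L^\infty(E)}$; both variants are valid under A1 alone.
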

\begin{proof}

We first prove that condition \eqref{eq:hyp:2} is fulfilled.
Take any $p \in \Pp_k(E)$. Using assumption A1 and an inverse inequality for polynomials (cf. Remark \ref{rm:vem-invest}), we get

\begin{equation}\label{eq:lowercond-est-wrg}
\begin{aligned}
s_E^\partial\big( (I-\PR) p , (I-\PR) p \big) &
= h_E | (I-\PR) p  |_{1,\partial E}^2
\lesssim h_E^2 \| \nabla (I-\PR) p  \|_{L^\infty(\partial E)}^2 \\
& \le h_E^2 \| \nabla (I-\PR) p  \|_{L^\infty(E)}^2
\lesssim \| \nabla (I-\PR) p  \|_{0, E}^2
= | p  |_{1,E}^2 ,
\end{aligned}
\end{equation}
i.e. condition \eqref{eq:hyp:2} holds with $\widehat C_2(E) \lesssim 1$.

To prove that condition \eqref{eq:hyp:1} is fulfilled,
we simply notice that
\begin{equation}\label{eq:wrg-1}
 | v_h |_{1/2,\partial E}^2 \lesssim h_E |v_h|_{1,\partial E}^2
 = h_E |(I-\PR)v_h|_{1,\partial E}^2 = s_E^\partial((I-\PR)v_h,(I-\PR)v_h) \qquad \forall v_h\in V_E .
\end{equation}
and we obtain that condition \eqref{eq:hyp:1} holds with $\widehat C_1(E) \lesssim 1$.
\end{proof}
}

\begin{cor}\label{cor:wrg}
Let assumption A1 hold. Let $u$ be the solution of problem \eqref{cont-pbl}, assumed to be in $H^s(\Omega)$, $s>3/2$. Let $u_h$ be the solution of the discrete problem \eqref{discr-pbl}, with the choice \eqref{wriggers_ch}. Then it holds
$$
\| u - u_h \|_{1,\Omega} \lesssim  \: h^{s-1} |u|_{s,\Omega} \qquad 3/2 < s \le k+1 .
$$

\end{cor}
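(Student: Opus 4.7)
The plan is to reduce to the general convergence result of Theorem \ref{teoconv}, exploiting the fact that Theorem \ref{thm-stab-wrg} provides uniform (i.e., $h$-independent) stability constants. First I would combine Theorem \ref{thm-stab-wrg} with Propositions \ref{prop:upperbound} and \ref{prop:lowerbound} to obtain that the main assumptions \eqref{ass-1}--\eqref{ass-2} hold with $C_1(E), C_2(E) \lesssim 1$ uniformly over $E \in \Omega_h$. By definition \eqref{cnsts}, this yields $\widetilde C(h), C_1(h), C^\star(h) \lesssim 1$ and hence $C_{err}(h) \lesssim 1$. Consequently, Theorem \ref{teoconv} reduces the problem to estimating the right-hand side of \eqref{teoconv:eq}.

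Next I would estimate the pieces on the right-hand side separately. The term $\mathfrak{F}_h$ is controlled by $h^k \lesssim h^{s-1}$ via \eqref{defrhs-ter}. The seminorm terms $|u-u_I|_{1,\Omega}$ and $|u-u_\pi|_{1,h}$ are both bounded by $h^{s-1}|u|_{s,\Omega}$: the former from Theorem \ref{thm:approx} (with $\sigma=1$), the latter from standard polynomial approximation on shape-regular polygons (compare \eqref{eq:id-4}). For the stability-induced seminorms $\tri u - u_I \tri$ and $\tri u - u_\pi \tri$ (recall \eqref{st-seminorm}, \eqref{st-norm}), the $a_E(\Pi_E\cdot,\Pi_E\cdot)$ contribution and (if retained) the internal part $s_E^\circ$ are handled exactly as in the proof of Corollary \ref{cor:ident}, yielding the required $h_E^{s-1}|u|_{s,E}$ bound.

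The only genuinely new ingredient is the contribution of the boundary stabilization \eqref{wriggers_ch} applied to $w = u - u_I$ (resp., $w = u - u_\pi$), i.e.~the quantity $h_E |w|_{1,\partial E}^2$. To estimate it, I would apply a scaled fractional trace inequality to the vector field $\nabla w \in H^{s-1}(E)$; since $s > 3/2$, we may pick an exponent $\sigma \in (3/2, \min\{\overline\sigma, s\})$ (recall $\overline\sigma > 3/2$ from Theorem \ref{thm:approx}), obtaining
\begin{equation*}
\|\nabla w\|_{0,\partial E}^2 \lesssim h_E^{-1} |w|_{1,E}^2 + h_E^{2\sigma-3} |w|_{\sigma,E}^2 ,
\end{equation*}
and hence $h_E |w|_{1,\partial E}^2 \lesssim |w|_{1,E}^2 + h_E^{2\sigma-2} |w|_{\sigma,E}^2$. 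Applying Theorem \ref{thm:approx} with exponents $1$ and $\sigma$ to $w = u - u_I$ gives $|w|_{1,E}^2 \lesssim h_E^{2s-2}|u|_{s,E}^2$ and $h_E^{2\sigma-2}|w|_{\sigma,E}^2 \lesssim h_E^{2s-2}|u|_{s,E}^2$, so that
\begin{equation*}
s_E^\partial\bigl((I-\PR)(u-u_I),(I-\PR)(u-u_I)\bigr) = h_E |u-u_I|_{1,\partial E}^2 \lesssim h_E^{2s-2} |u|_{s,E}^2 .
\end{equation*}
The analogous bound for $u - u_\pi$ follows likewise from standard polynomial approximation.

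Summing over $E \in \Omega_h$ gives $\tri u-u_I \tri + \tri u-u_\pi \tri \lesssim h^{s-1}|u|_{s,\Omega}$, and plugging all these estimates into \eqref{teoconv:eq} yields the claim. The only delicate step is the fractional trace inequality above: one has to avoid using $H^2$-regularity of $u-u_I$ (which is not available for general $s \in (3/2,2]$ nor for non-convex elements with $\overline\sigma < 2$), and to choose the intermediate exponent $\sigma$ in the range $(3/2,\min\{\overline\sigma,s\})$ guaranteed by Theorem \ref{thm:approx}. Everything else is an assembly of previously proved lemmas.
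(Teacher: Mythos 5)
Your proposal is correct and follows essentially the same route as the paper: uniform constants from Theorem \ref{thm-stab-wrg} via Propositions \ref{prop:upperbound}--\ref{prop:lowerbound}, then Theorem \ref{teoconv} with $C_{err}(h)\lesssim 1$, with the only new work being the scaled fractional trace estimate $h_E|w|_{1,\partial E}^2\lesssim |w|_{1,E}^2+h_E^{2\sigma-2}|w|_{\sigma,E}^2$ applied to $w=u-u_I$ and $w=u-u_\pi$, exactly as in the paper's \eqref{eq:wrg-7}--\eqref{eq:wrg-8}. Your explicit requirement $\sigma<\min\{\overline\sigma,s\}$ (so that Theorem \ref{thm:approx} applies at the intermediate exponent) is a small but welcome point of extra care over the paper's ``$3/2<\sigma<s$''.
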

\begin{proof}
Theorem \ref{thm-stab-wrg} allows to apply Propositions \ref{prop:upperbound}
 and \ref{prop:lowerbound}. Therefore, assumptions \eqref{ass-1} and \eqref{ass-2} hold
 with $C_1(E)\lesssim 1$ and $C_2(E)\lesssim 1$, respectively.
Then, Theorem \ref{teoconv} can be invoked with $C_{err}(h)$ satisfying
$C_{err}(h)\lesssim 1$. We now estimate the terms in the right-hand side of \eqref{teoconv:eq}.
Using exactly the same arguments of Corollary \ref{cor:ident}, and focusing again only on the non-trivial choice \eqref{stab-int-bound}, we get:
\begin{equation}\label{eq:wrg-00}
\mathfrak{F}_h \lesssim h^k ,
\end{equation}
\begin{equation}\label{eq:wrg-3}
\big( \sum_{\E\in\Th}  |\Pi_E(u-u_I)|_{1,\E}^2\big)^{1/2}
\lesssim| u-u_I |_{1,\Omega}
 \lesssim h^{s-1} | u |_s \qquad 3/2 < s\le k+1 ,
\end{equation}
\begin{equation}\label{eq:wrg-4}
\big( \sum_{\E\in\Th}  |\Pi_E(u-\up)|_{1,E}^2\big)^{1/2}
  \lesssim \big( \sum_{\E\in\Th} |u-\up|_{1,E}^2\big)^{1/2}
 \lesssim h^{s-1} | u |_s \qquad 3/2 < s\le k+1 ,
\end{equation}
and
\begin{equation}\label{eq:wrg-9}
 s_E^\circ((I-\PR)(u-u_I),(I-\PR)(u-u_I))
 \lesssim |u-u_I|_{1,E}^2 \lesssim h^{2s-2}_E |u|_{s,E}^2 \qquad 3/2 < s\le k+1 .
\end{equation}
Therefore, we only need to estimate the boundary part
$$
s_E^\partial((I- \PR)(u-u_I),(I- \PR)(u-u_I)) = s_E^\partial(u-u_I   ,u-u_I) .
$$
To this end, take $s>3/2$ and $\sigma$ such that $3/2< \sigma < s$.
We have, using a scaled trace inequality (use a similar argument to that in Lemma \ref{lem:tr:um} for the function $\nabla(u-u_I)$),

\begin{equation}\label{eq:wrg-7}
s_E^\partial(u-u_I ,u-u_I)  = h_E |u-u_I |^2_{1,\partial E}
 \lesssim   |u-u_I|_{1,E}^2 + h^{2\sigma - 2}_E |u-u_I|_{\sigma,E}^2.
\end{equation}
Hence, Theorem \ref{thm:approx} gives:

\begin{equation}\label{eq:wrg-8}
s_E^\partial(u-u_I  ,u-u_I)
\lesssim   |u-u_I|_{1,E}^2 + h^{2\sigma-2}_E |u-u_I|_{\sigma,E}^2
\lesssim  h^{2s-2}_E |u|_{s,E}^2  .
\end{equation}

Combining \eqref{eq:wrg-9} and \eqref{eq:wrg-8}, we get

\begin{equation}\label{eq:wrg-10}
 \tri u-u_I \tri
 \lesssim \Big( \sum_{E\in \Th} h^{2s-2}_E |u|_{s,E}^2 \Big)^{1/2} \lesssim h^{s-1} |u|_{s}.
\end{equation}

Similarly, using also standard approximation results on polygons (see \cite{scott-dupont}), we get

\begin{equation}\label{eq:wrg-11}
 \tri u-u_\pi \tri
 \lesssim \Big( \sum_{E\in \Th} h^{2s-2}_E |u|_{s,E}^2 \Big)^{1/2} \lesssim h^{s-1} |u|_{s}.
\end{equation}

We conclude by collecting estimates \eqref{eq:wrg-00}, \eqref{eq:wrg-3}, \eqref{eq:wrg-4},
\eqref{eq:wrg-10} and \eqref{eq:wrg-11}.

\end{proof}

\begin{remark}
The same analysis can be employed to prove error estimates for many other choices of the stabilization. We here spend some word on the following variants of choice \eqref{wriggers_ch}.

\smallskip\noindent
The first variant is an ``$L^2$-version'' of \eqref{wriggers_ch}:
\begin{equation}\label{wriggers_l2}
s_E^\partial(v_h,w_h) = \sum_{e\in\partial E} h_e^{-1} \int_{e} v_h \:  w_h {\rm d}\,s
\qquad \forall v_h, w_h \in V_E .
\end{equation}
Since it is easy to check that, under assumptions A1 and A2, it holds
$$
\sum_{e\in\partial E} h_e^{-1} \| v_h \|_{0,E}^2 \simeq \| v_h \|_{L^\infty}^2 \qquad \forall v_h \in V_{h|E},
$$
by following the same steps used for the identity matrix choice of Section \ref{ssec:3:1}, one can easily obtain that Theorem \ref{thm-stab} holds also for the present choice.

A second possible choice would be to substitute $h_e^{-1}$ with $h_E^{-1}$ in \eqref{wriggers_l2}. For this choice, robust results (at least from the theoretical perspective of the present analysis) would be obtained only under the stronger assumption A3. Indeed, it is easy to check that for this latter choice one would get a factor $h_E/h_{m(E)}$ in the constant of bound \eqref{eq:hyp:1}.

%\item A ``discrete $L^2$-version'' of \eqref{wriggers_ch}:
%
%\begin{equation}\label{wriggers_l2-d}
%s_E^\partial(v_h,w_h) =
%h_E \sum_{e\in\partial E} h_e^{-1} \Big( v_h(p_e^+) - v_h(p_e^-)\Big) \Big( w_h(p_e^+) - w_h(p_e^-)\Big)
%\qquad \forall v_h, w_h \in V_E ,
%\end{equation}
%%
%where, given an orientation of $\partial E$ (counterclockwise, for instance), $p_e^-$ and $p_e^+$ denotes the first and the second endpoint of $e$, respectively. We also notice that for $k=1$, the choice \eqref{wriggers_l2-d} is indeed identical to \eqref{wriggers_ch}.
\end{remark}

% --------------------------------------------------------------------------------
\section{Numerical tests}\label{sec:4}
% --------------------------------------------------------------------------------

For all numerical tests we will consider Laplace equation on the
unit square $\Omega:=]0,1[^2$:
\begin{equation}
\left\{
\begin{aligned}
-\Delta u &= f \quad\text{in }\Omega \\
        u &= g \quad\text{on }\partial\Omega
\end{aligned}
\right.
\end{equation}
with right hand side $f$ and Dirichlet boundary condition $g$
defined in such a way that the exact solution is
\begin{equation}
\uex(x,y) := x^3 - xy^2 + x^2y + x^2 -xy -x + y -1 +
\sin(5x)\sin(7y) + \log(1+x^2+y^4).
\end{equation}
In the following brief experiments we will address some of the
issues considered in the paper.
\begin{figure}[ht]
\centerline{
\includegraphics[width=0.7\textwidth]{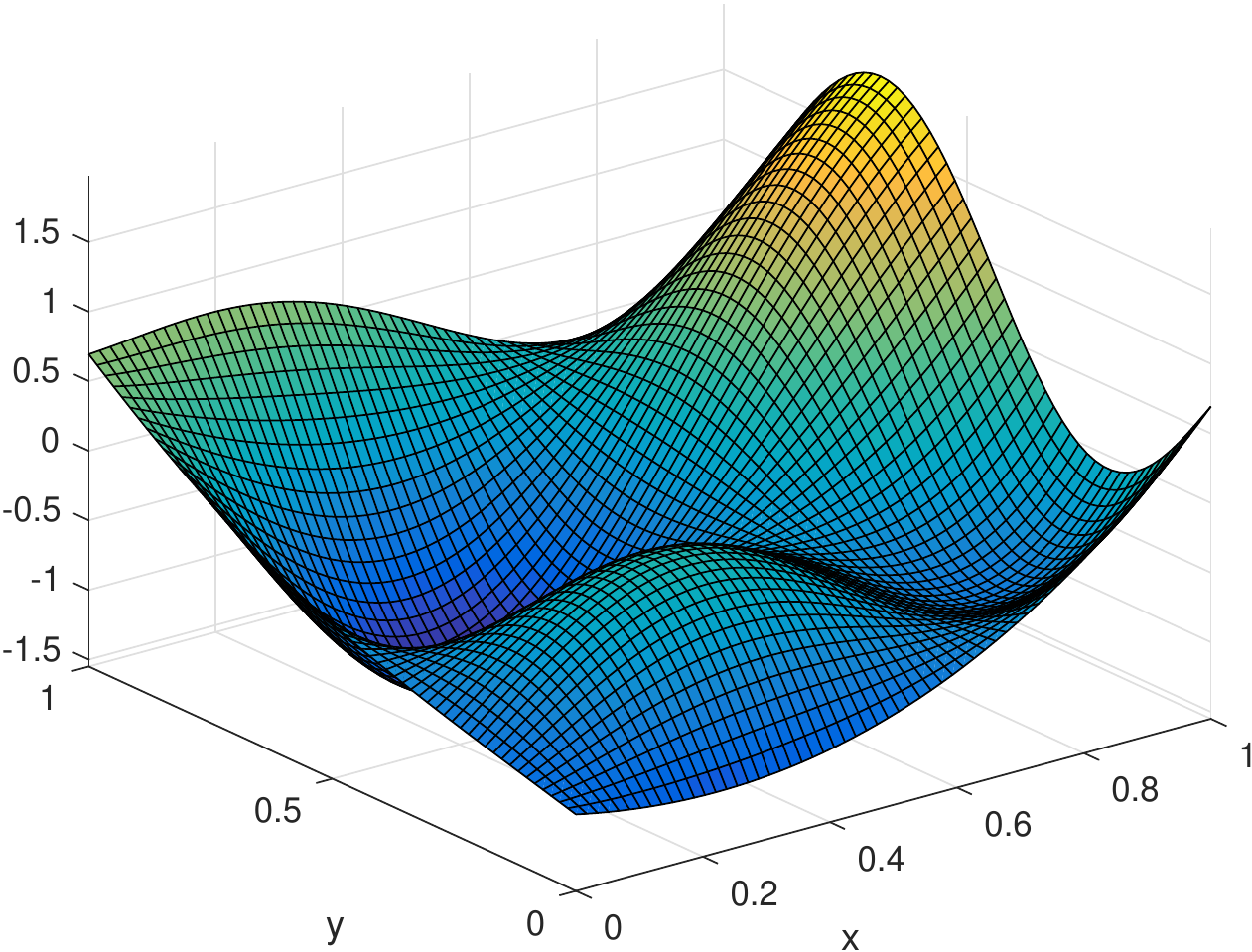}
} \caption{Exact solution} \label{exact-solution}
\end{figure}

\begin{remark}
First of all, we point out that in all our experiments we have
observed a very weak dependence of the VEM solution with respect
to the inclusion in the stabilization of the term $s_E^\circ(v,w)$
depending on the internal degrees of freedom. %%
This observation holds for all kinds of boundary stabilization
adopted. %%
Hence, we have set everywhere $s_E^\circ(v,w)=0$ (see Subsection
\ref{ssec:boundaryred}).
\end{remark}

% ----------
\subsection{Small edges}
In the first numerical experiment we consider the issue of the
presence of very small edges. On the one hand, we show that the
classical VEM stabilization \eqref{stab:form:classic} can generate
small oscillations, that are of the order of the approximation
error. In the case $k=1$ these oscillations are visible and,
depending on the application, may be preferable to avoid. However,
already for $k=2$ the oscillations become so small to be
practically negligible. %%
On the other hand, we show that the stabilization
\eqref{wriggers_ch} eliminates this oscillations already for the
$k=1$ case.

We consider a mesh obtained by gluing together two distinct meshes
along $x=0.5$; this case can happen for instance in contact
problems, see \cite{wriggers}. The mesh is shown in
Fig.\,\ref{fig:mesh-joined}, while in Fig.\,\ref{fig:mesh-section}
we show the section of the mesh at $x=0.5$. Note that around
$y=0.4$ there is a very small edge of length $3.21\times10^{-4}$.
\begin{figure}[ht]
\hfill
 \begin{minipage}[b]{0.45\textwidth}
  \centerline{
  \includegraphics[width=\textwidth]{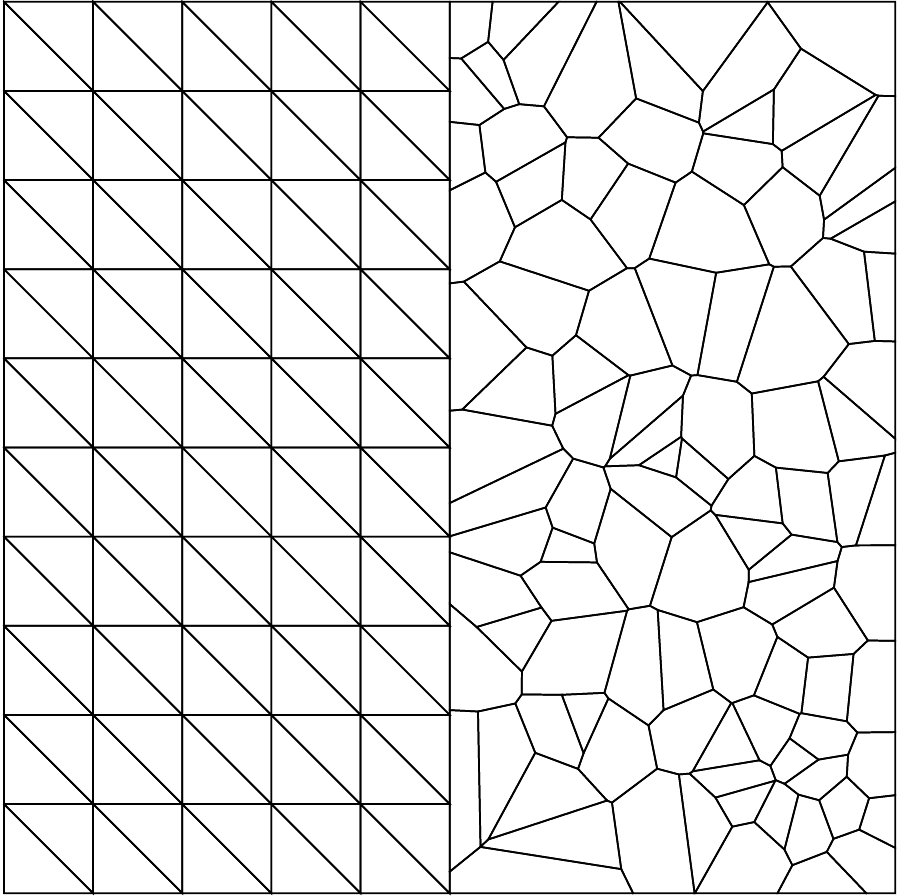}
  }
  \caption{Mesh}
 \label{fig:mesh-joined}
 \end{minipage}
\hfill
 \begin{minipage}[b]{0.45\textwidth}
  \centerline{
  \includegraphics[width=\textwidth]{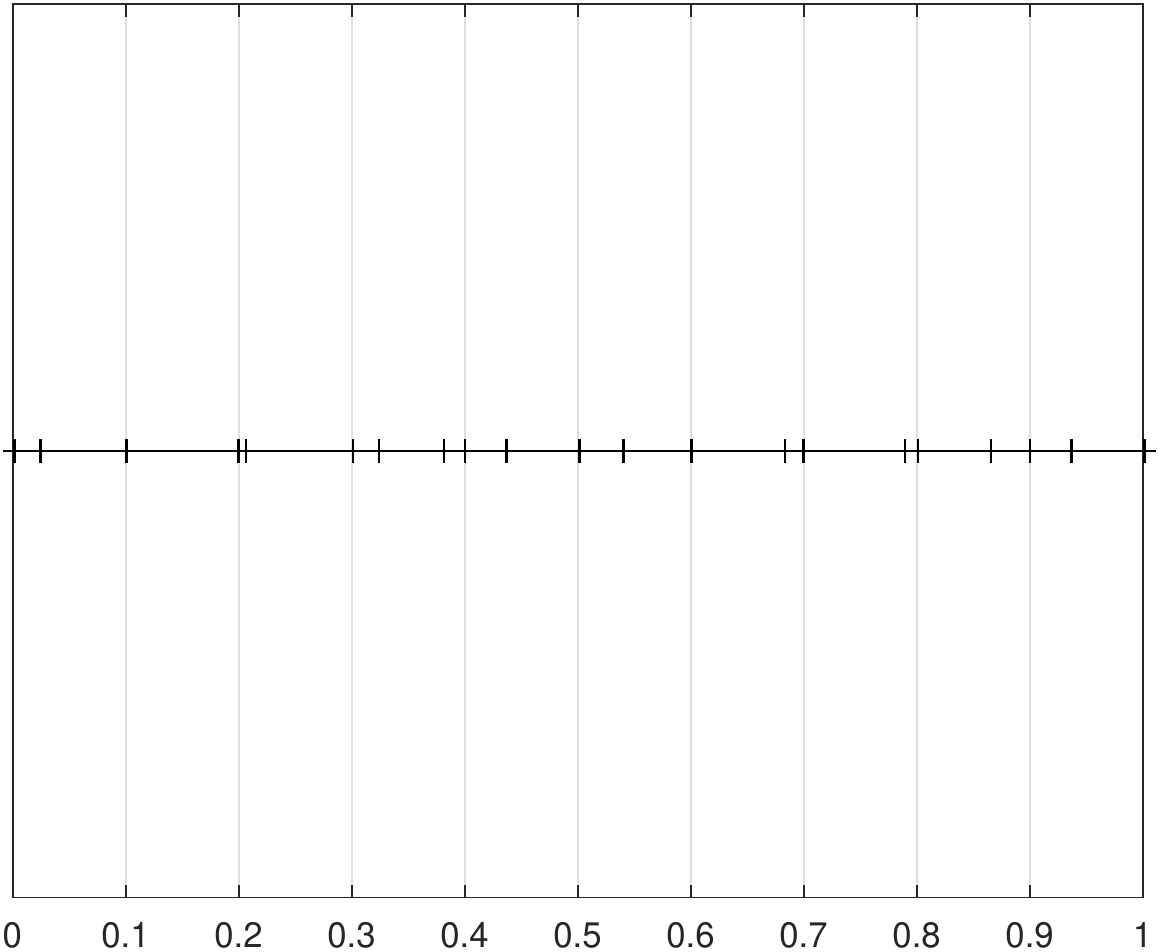}
  }
  \caption{Mesh section at $x=0.5$}
 \label{fig:mesh-section}
 \end{minipage}
\hfill
\end{figure}

In Figs.\,\ref{fig:section-classical-k=1} and
\ref{fig:section-classical-k=2} we plot the section at $x=0.5$ of
the VEM solution for the classical stabilization
\eqref{stab:form:classic} (thick line) together with the exact
solution (thinner line) for $k=1$ and $k=2$ respectively.
\begin{figure}[ht]
\hfill
 \begin{minipage}[b]{0.45\textwidth}
  \centerline{
  \includegraphics[width=\textwidth]{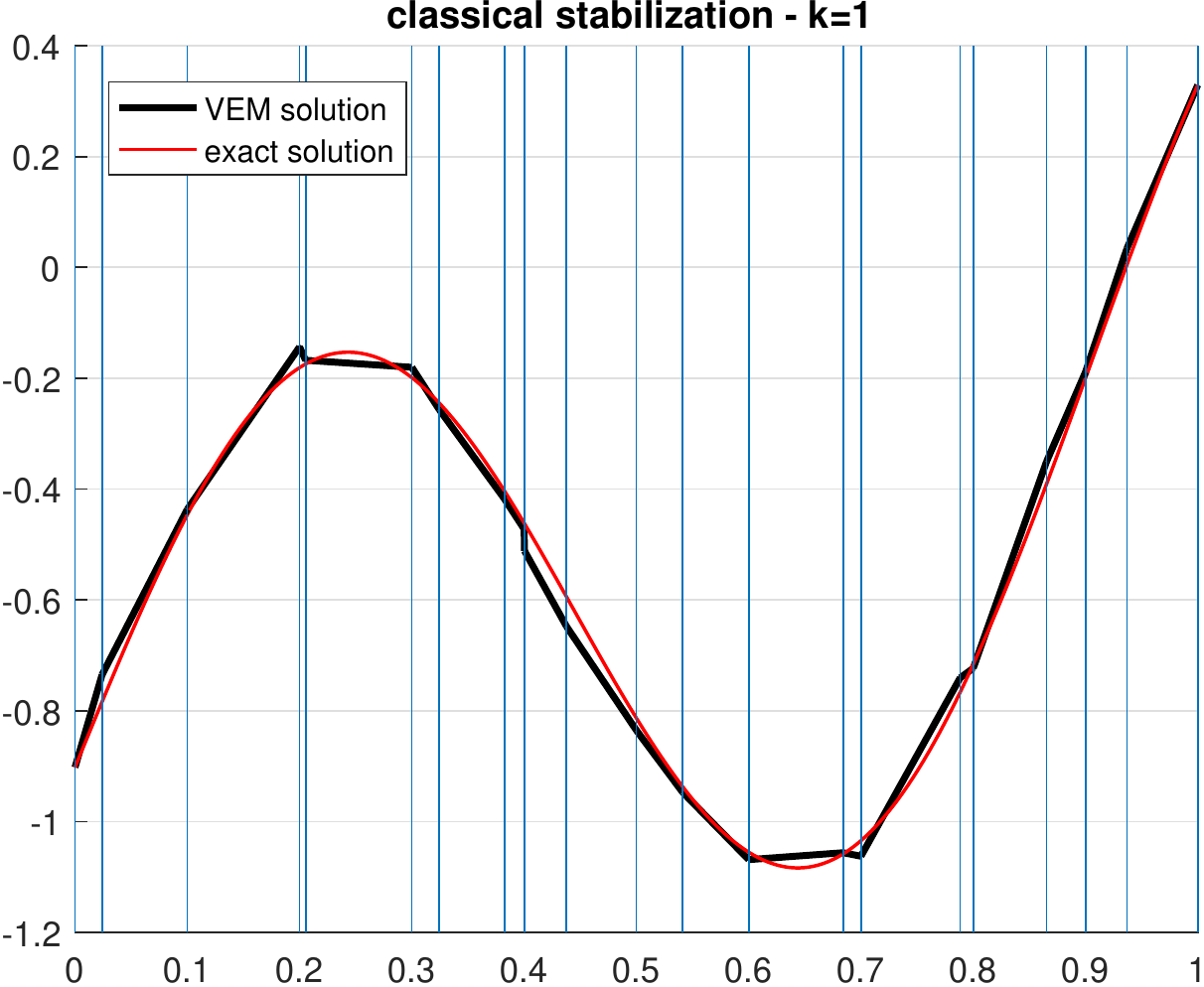}
  }
  \caption{exact solution and VEM solution for $k=1$ and classical stabilization
  \eqref{stab:form:classic}}
 \label{fig:section-classical-k=1}
 \end{minipage}
\hfill
 \begin{minipage}[b]{0.45\textwidth}
  \centerline{
  \includegraphics[width=\textwidth]{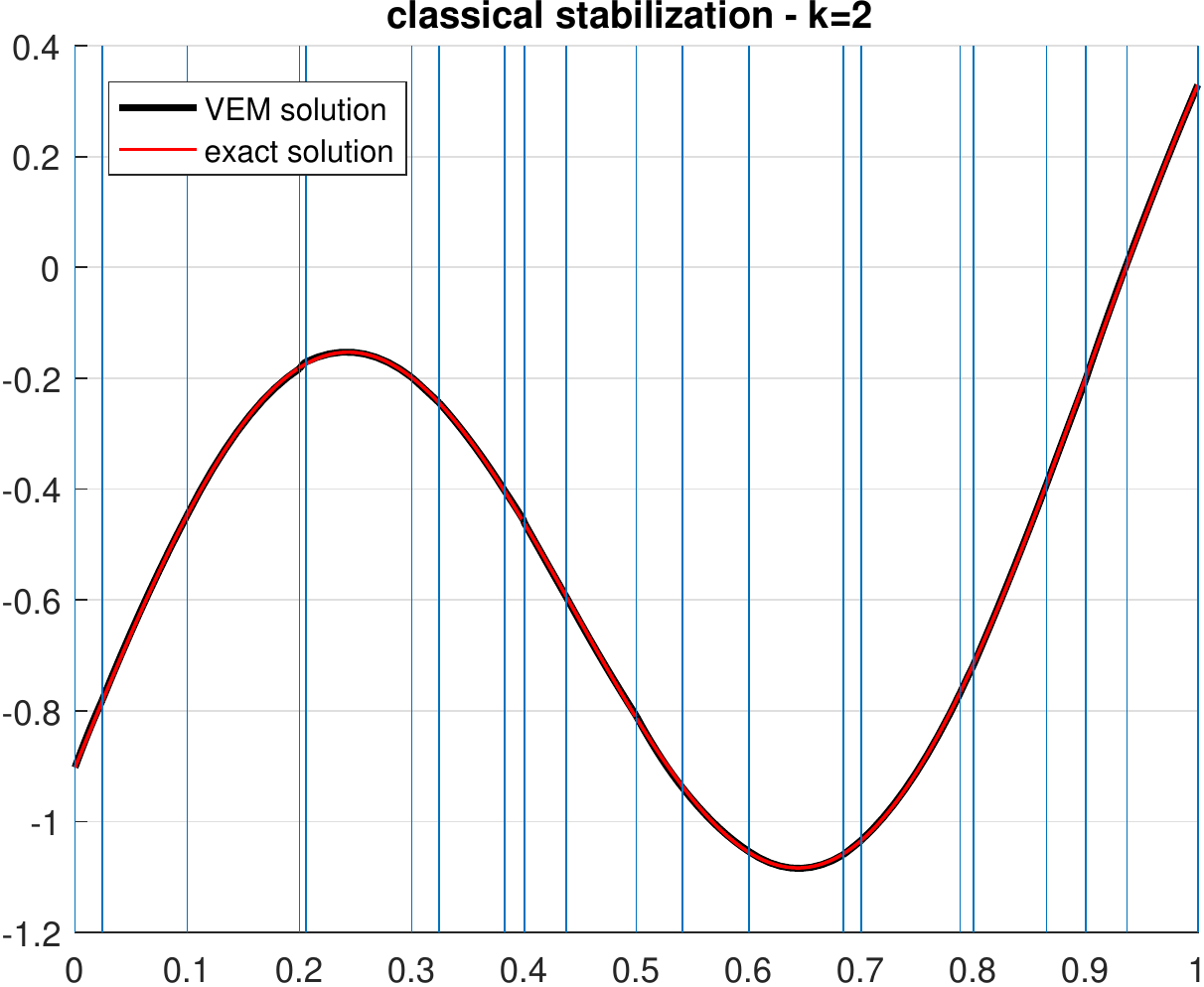}
  }
  \caption{exact solution and VEM solution for $k=2$ and classical stabilization
  \eqref{stab:form:classic}}
 \label{fig:section-classical-k=2}
 \end{minipage}
\hfill
\end{figure}
A careful inspection shows that that in
Fig.\,\ref{fig:section-classical-k=1} there are some small
oscillations in correspondence of the small edges. In Fig.
\ref{fig:section-classical-k=2} the oscillations are no more
visible but are still present.
We reproduce the same experiments in
Figs.\,\ref{fig:section-wriggers-k=1} and
\ref{fig:section-wriggers-k=2} with the boundary stabilization
\eqref{wriggers_ch}.
\begin{figure}[ht]
\hfill
 \begin{minipage}[b]{0.45\textwidth}
  \centerline{
  \includegraphics[width=\textwidth]{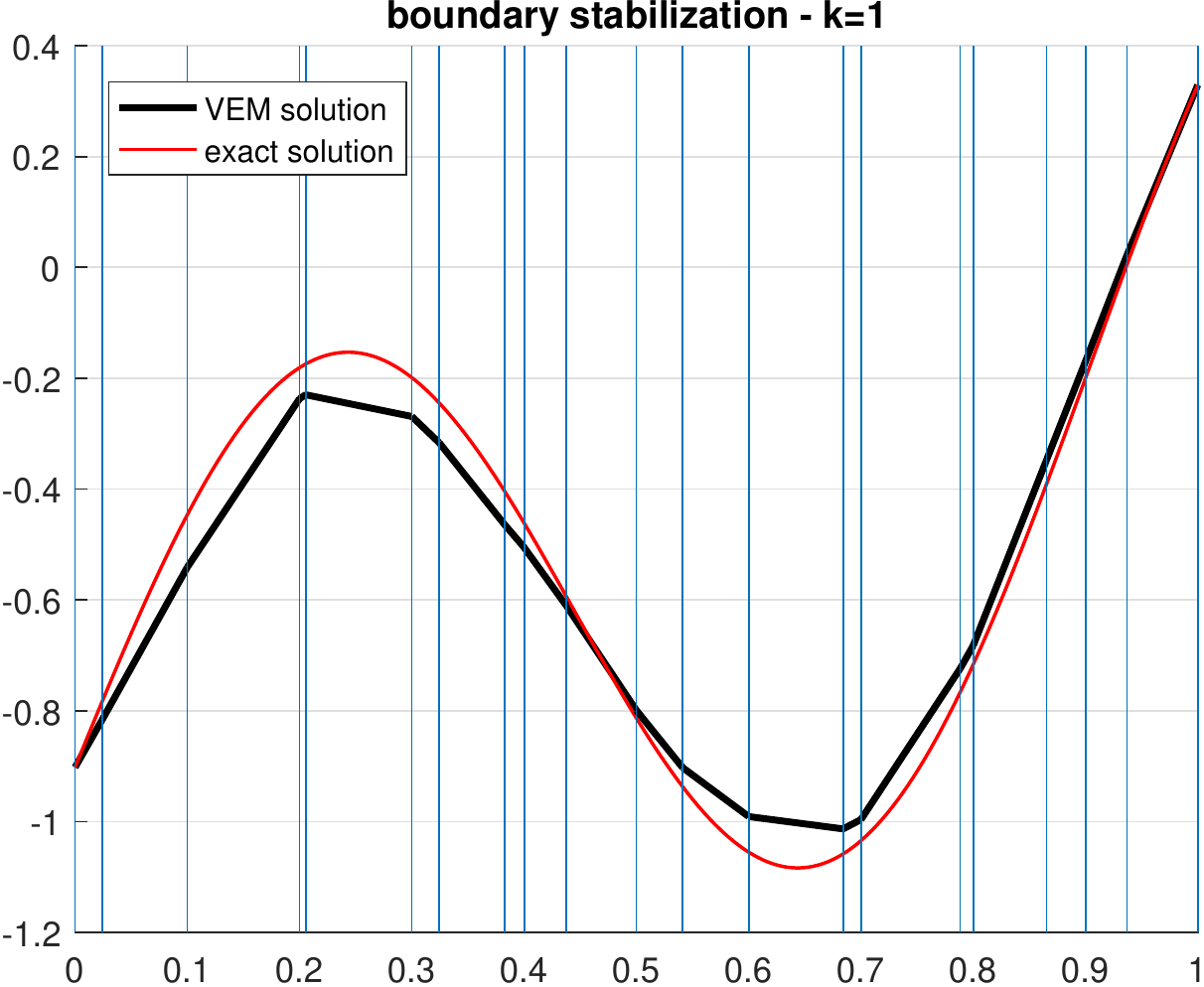}
  }
  \caption{exact solution and VEM solution for $k=1$ and boundary stabilization
  \eqref{wriggers_ch}}
 \label{fig:section-wriggers-k=1}
 \end{minipage}
\hfill
 \begin{minipage}[b]{0.45\textwidth}
  \centerline{
  \includegraphics[width=\textwidth]{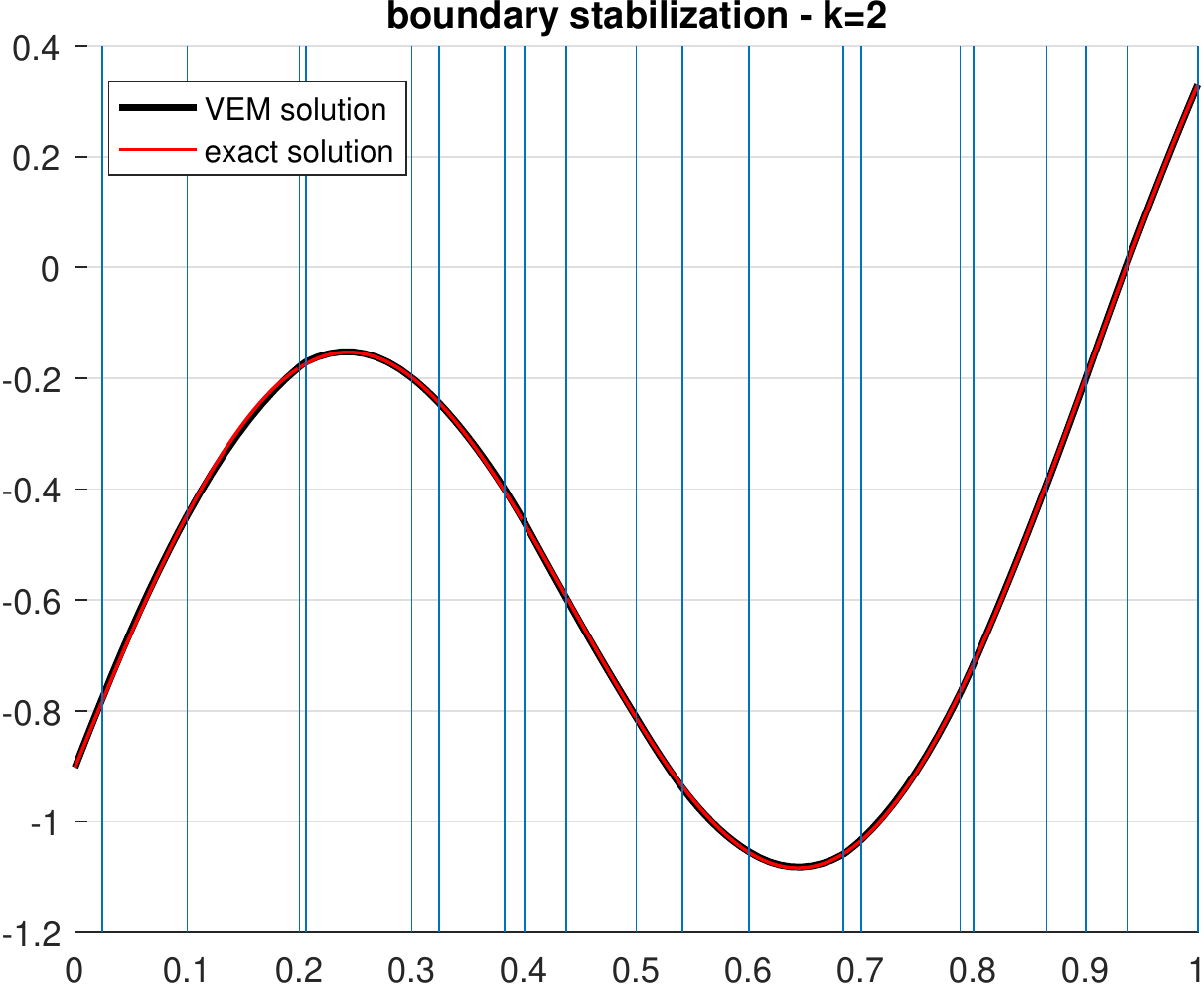}
  }
  \caption{exact solution and VEM solution for $k=2$ and boundary stabilization
  \eqref{wriggers_ch}}
 \label{fig:section-wriggers-k=2}
 \end{minipage}
\hfill
\end{figure}
Now the oscillation have disappeared also for $k=1$ but in this
case the VEM solution seems to be less accurate. The motivation is
that the boundary stabilization \eqref{wriggers_ch} is too strong.
The situation can be improved by taking a smaller stabilization
parameter (see Remark \ref{tau}); in
Figs.\,\ref{fig:section-wriggers-tau=0.1-k=1} and
\ref{fig:section-wriggers-tau=0.1-k=2} we show the same
experiments with $\tau_E=\tau=0.1$. %%
We have developed several further experiments (here not shown)
using different meshes and loading, and choosing $\tau=0.1$ for
$s_E(\cdot,\cdot)$ as in  \eqref{wriggers_ch}: the obtained
results were always accurate. This is a general property of VEM:
the sensitivity of the method with respect to the stabilization
parameter is very mild when considering different meshes and
loading/boundary data. %%
%where now the stabilization term is given by
%$$
%s_E^\partial(v_h,w_h) = \tau \: h_E \int_{\partial E} \partial_s v_h \: \partial_s w_h \: %{\rm d}s
%\qquad \forall v_h, w_h \in V_E, \ \forall E \in \Omega_h .
%$$
%Note that taking $\tau=1$ in the above definition would give back \eqref{wriggers_ch}.
%
\begin{figure}[ht]
\hfill
 \begin{minipage}[b]{0.45\textwidth}
  \centerline{
  \includegraphics[width=\textwidth]{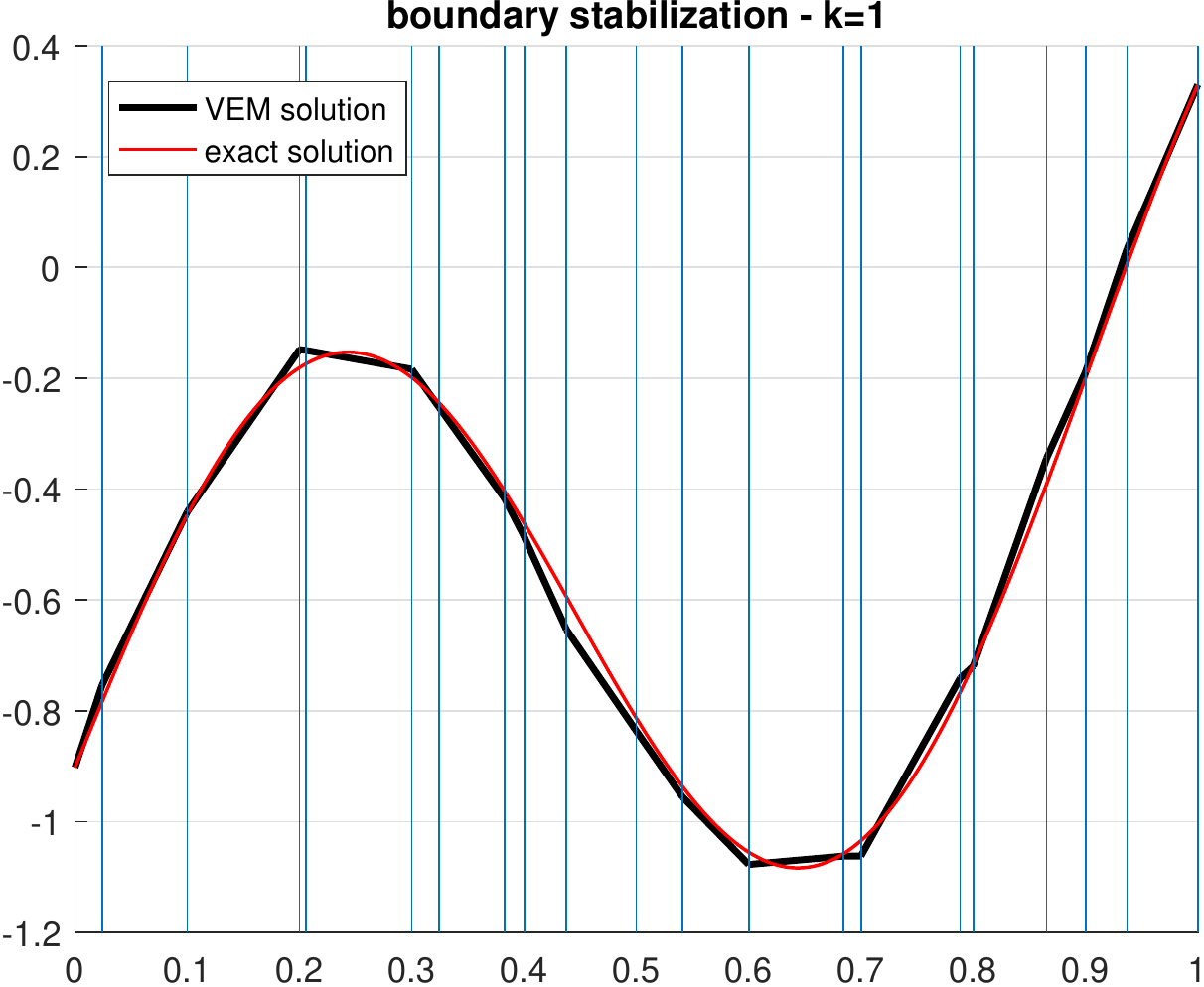}
  }
  \caption{exact solution and VEM solution for $k=1$,
  boundary stabilization \eqref{wriggers_ch} and $\tau=0.1$}
 \label{fig:section-wriggers-tau=0.1-k=1}
 \end{minipage}
\hfill
 \begin{minipage}[b]{0.45\textwidth}
  \centerline{
  \includegraphics[width=\textwidth]{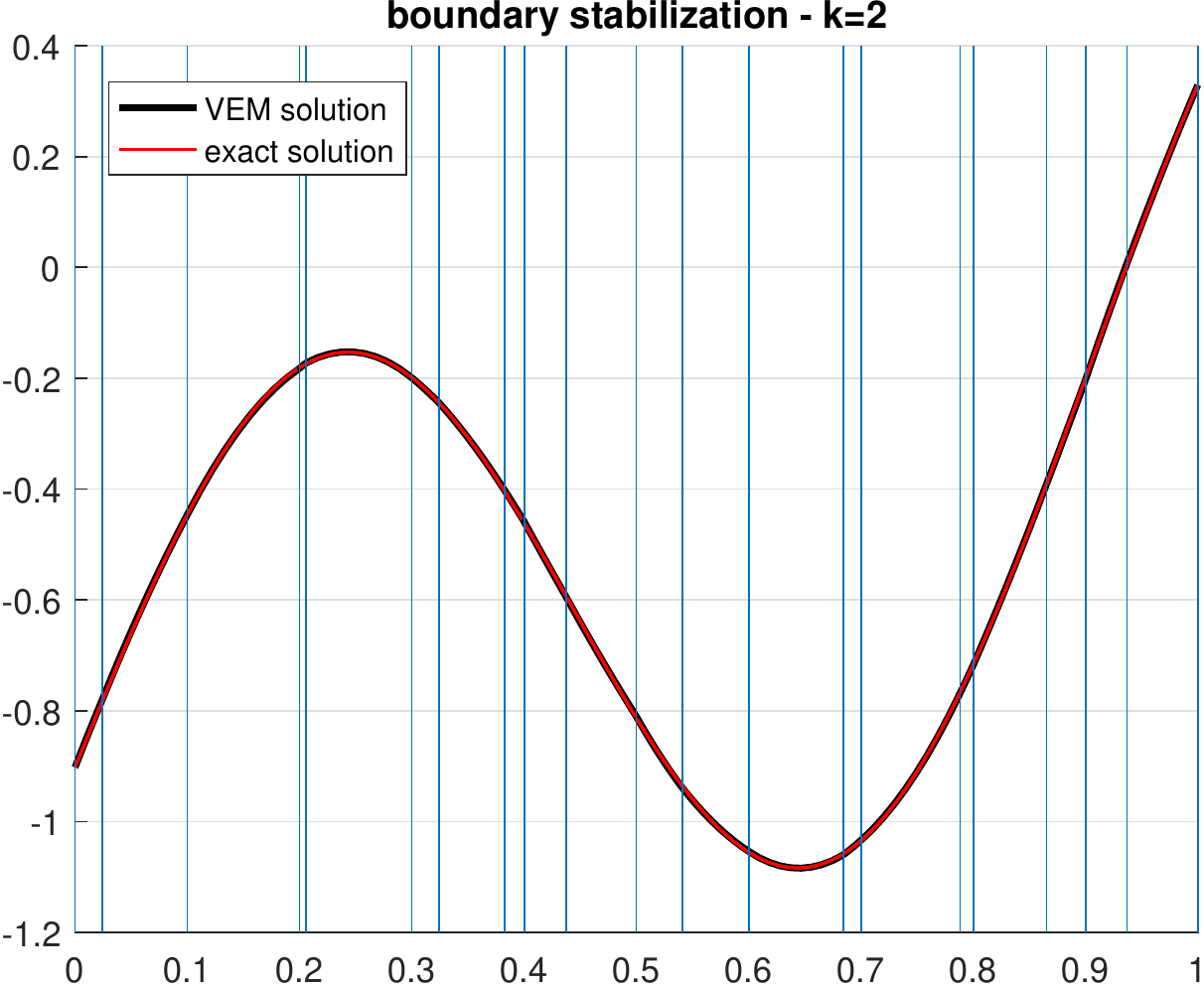}
  }
  \caption{exact solution and VEM solution for $k=2$, boundary stabilization
  \eqref{wriggers_ch} and $\tau=0.1$}
 \label{fig:section-wriggers-tau=0.1-k=2}
 \end{minipage}
\hfill
\end{figure}
%
% We point out that this remedy \textit{does not work} for the classical stabilization
% \eqref{stab:form:classic}, as shown in Figs.\,\ref{fig:section-classical-tau=0.1-k=1} and \ref{fig:section-classical-tau=0.1-k=2}.
%%
Nevertheless, a detailed study on such an issue is beyond the
scope of the present paper. %%

%
%\begin{figure}[ht]
%\hfill
% \begin{minipage}[b]{0.45\textwidth}
%  \centerline{
%  \includegraphics[width=\textwidth]{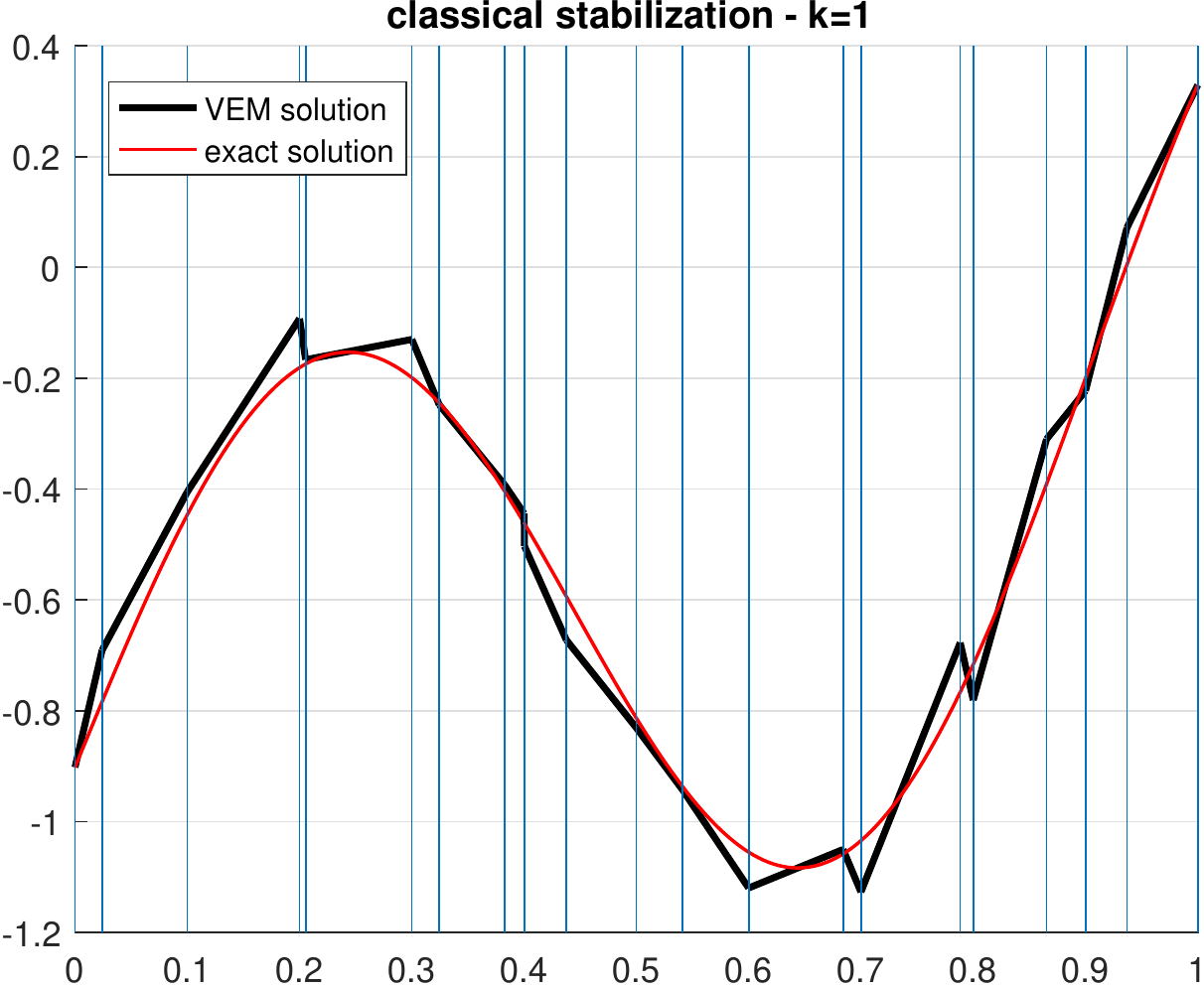}
%  }
%  \caption{VEM solution and exact solution for $k=1$, classical stabilization and $\tau=0.1$}
% \label{fig:section-classical-tau=0.1-k=1}
% \end{minipage}
%\hfill
% \begin{minipage}[b]{0.45\textwidth}
%  \centerline{
%  \includegraphics[width=\textwidth]{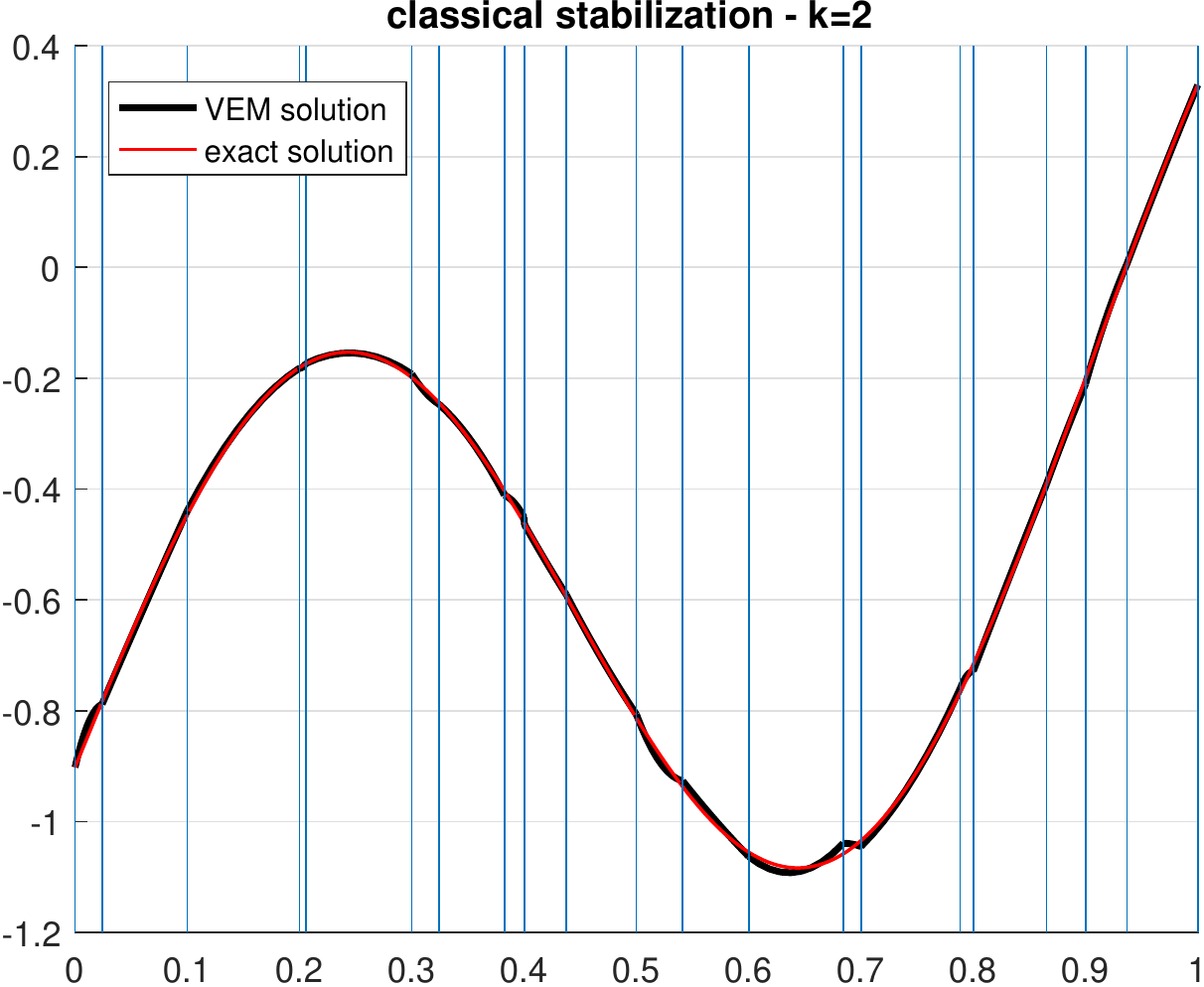}
%  }
%  \caption{VEM solution and exact solution for $k=2$, classical stabilization and $\tau=0.1$}
% \label{fig:section-classical-tau=0.1-k=2}
% \end{minipage}
%\hfill
%\end{figure}
%

% ---------------------------------------------------------
\subsection{Convergence in $H^1$}

We will show, in a loglog scale, the convergence curves of the
error in the $H^1$ seminorm between the exact solution $\uex$ and
the solution $u_h$ given by the Virtual Element Method. As the VEM
solution $u_h$ is not explicitly known inside the elements, we
compare $\nabla\uex$ with the elementwise $L^2-$projection of
$\nabla u_h$ onto $\Pp_{k-1}$, that is, with $\Pi^0_{k-1}\nabla
u_h$. It is easy to see that this latter quantity can indeed be
computed starting from the degrees of freedom of $u_h$.
For the convergence test we consider four sequences of meshes.% described below.

The first sequence of meshes (labelled \texttt{square}) is simply
a decomposition of the domain in $4\times4$, $8\times8$,
$16\times16$ and $32\times32$ equal squares, and the second one
(labelled \texttt{hexagon}) is a decomposition of the domain in
$8\times10$, $18\times20$, $26\times30$, $34\times40$ and
$44\times50$ (almost) regular hexagons.
The first meshes of the two sequences are shown in Fig.
\ref{fig:square} and in Fig. \ref{fig:hexagon} respectively.
\begin{figure}[ht]
\hfill
 \begin{minipage}[b]{0.45\textwidth}
  \centerline{
  \includegraphics[width=\textwidth]{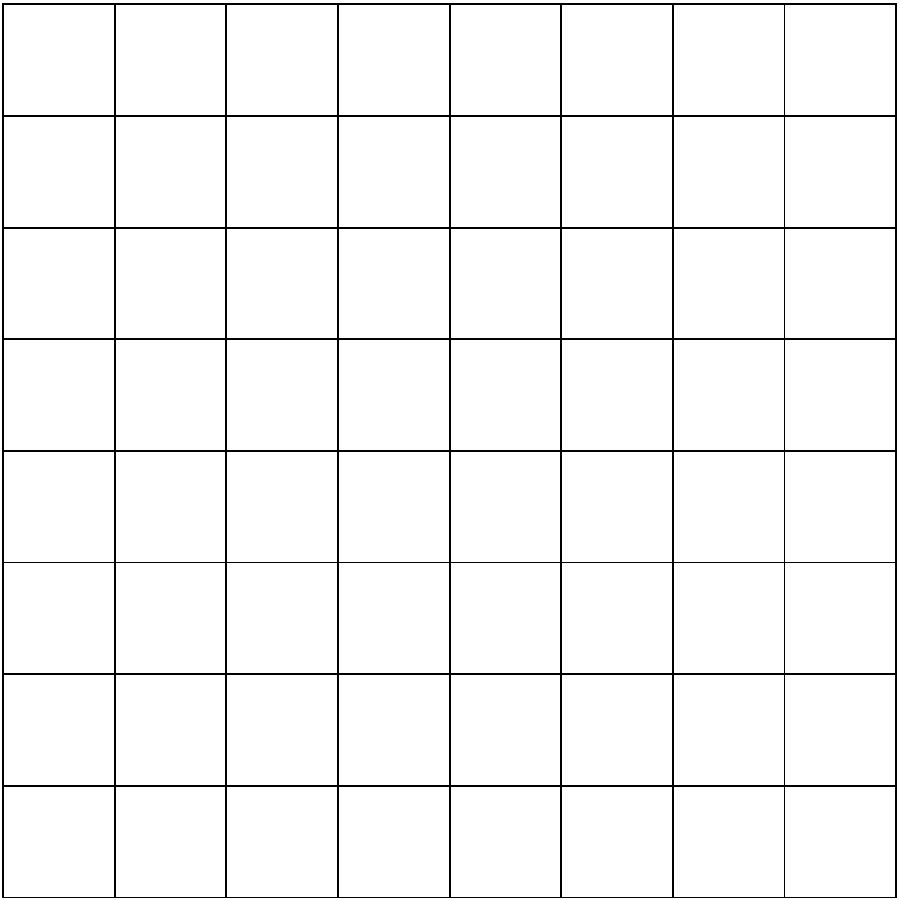}
  }
  \caption{\texttt{square} mesh}
 \label{fig:square}
 \end{minipage}
\hfill
 \begin{minipage}[b]{0.45\textwidth}
  \centerline{
  \includegraphics[width=\textwidth]{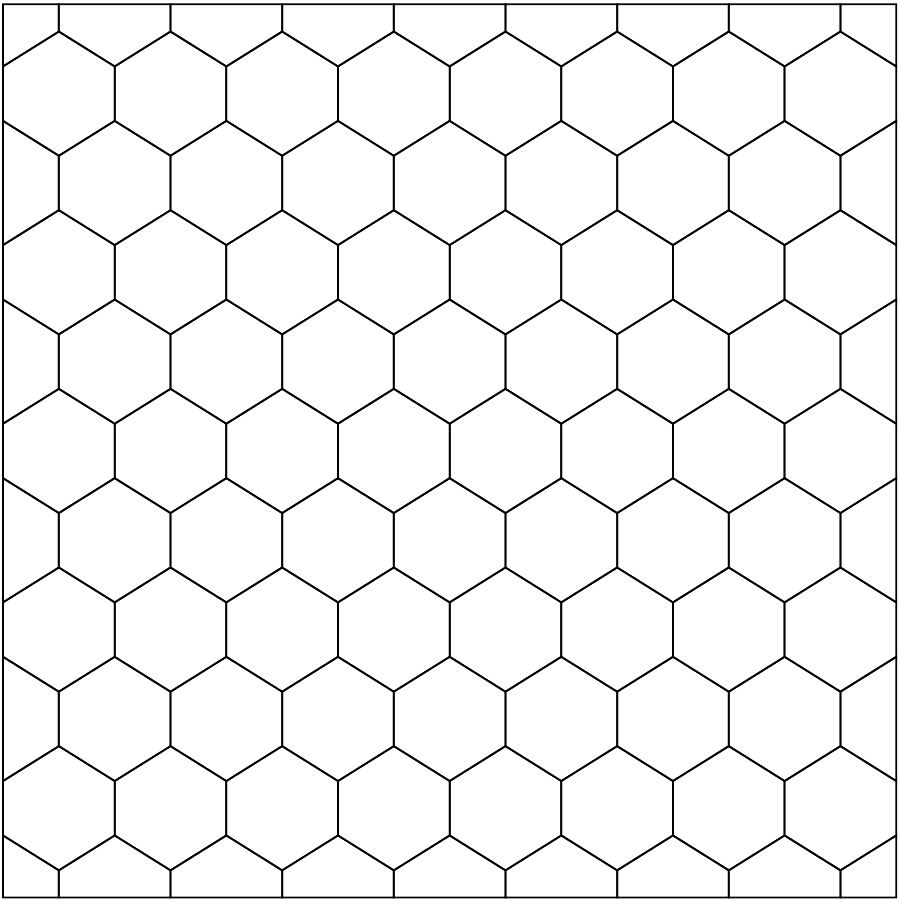}
  }
  \caption{\texttt{hexagon} mesh}
 \label{fig:hexagon}
 \end{minipage}
\hfill
\end{figure}

The third sequence of meshes (labelled \texttt{Lloyd-0}) is a
random Voronoi polygonal tessellation of the unit square in 25,
100, 400 and 1600 polygons.
The fourth sequence (labelled \texttt{Lloyd-100}) is obtained
starting from the previous one and performing 100 Lloyd iterations
leading to a Centroidal Voronoi Tessellation (CVT) (see e.g.
\cite{Du:Faber99}).
The 100-polygon mesh of each family is shown in
Fig.~\ref{fig:Lloyd-0} (\texttt{Lloyd-0}) and in
Fig.~\ref{fig:Lloyd-100} (\texttt{Lloyd-100}) respectively.
\begin{figure}[ht]
\hfill
 \begin{minipage}[b]{0.45\textwidth}
  \centerline{
  \includegraphics[width=\textwidth]{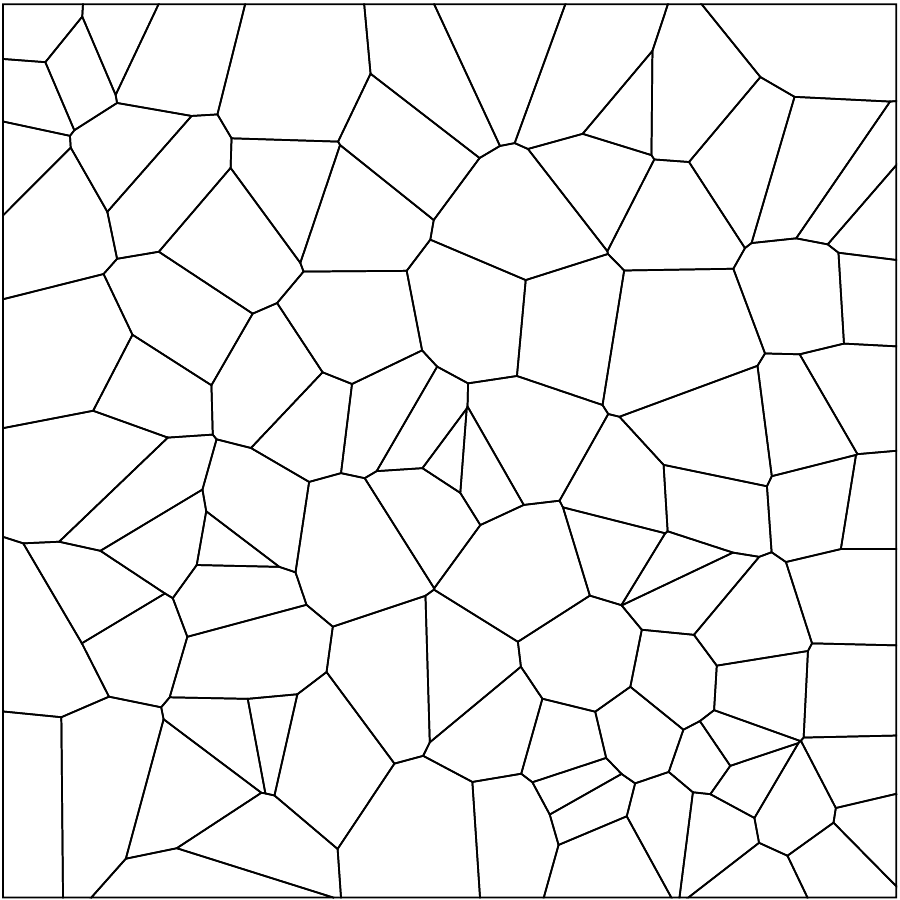}
  }
  \caption{\texttt{Lloyd-0} mesh}
 \label{fig:Lloyd-0}
 \end{minipage}
\hfill
 \begin{minipage}[b]{0.45\textwidth}
  \centerline{
  \includegraphics[width=\textwidth]{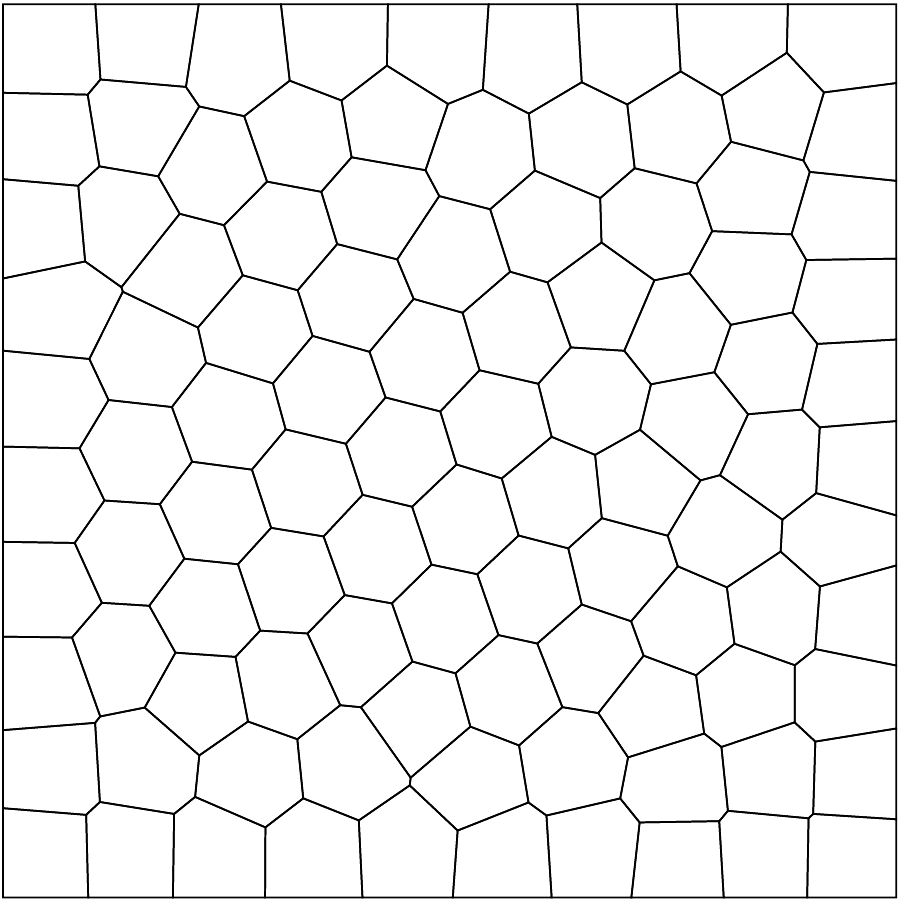}
  }
  \caption{\texttt{Lloyd-100} mesh}
 \label{fig:Lloyd-100}
 \end{minipage}
\hfill
\end{figure}

In the figures from \ref{fig:k=1-square-H1} to
\ref{fig:k=1-LLoyd-100-H1} we plot for $k=1$ (low order case) the
$H^1$ error on each mesh family as a function of the mean diameter
$h$ of the polygons. We consider the classical stabilization
\eqref{stab:form:classic} (solid line), the boundary stabilization
\eqref{wriggers_ch} with $\tau=1$ (dotted line), and the boundary
stabilization \eqref{wriggers_ch} with $\tau=0.1$ (dashed line).
In the figures from \ref{fig:k=5-square-H1} to
\ref{fig:k=5-LLoyd-100-H1} we finally plot the same values for
$k=5$ (as a sample high order case).

We observe that as $h$ goes to zero all stabilizations behave very
similarly, namely as $O(h^k)$, as predicted by the theory.

%
% k=1
%
\begin{figure}[ht]
\hfill
 \begin{minipage}[b]{0.49\textwidth}
  \centerline{
  \includegraphics[width=\textwidth]{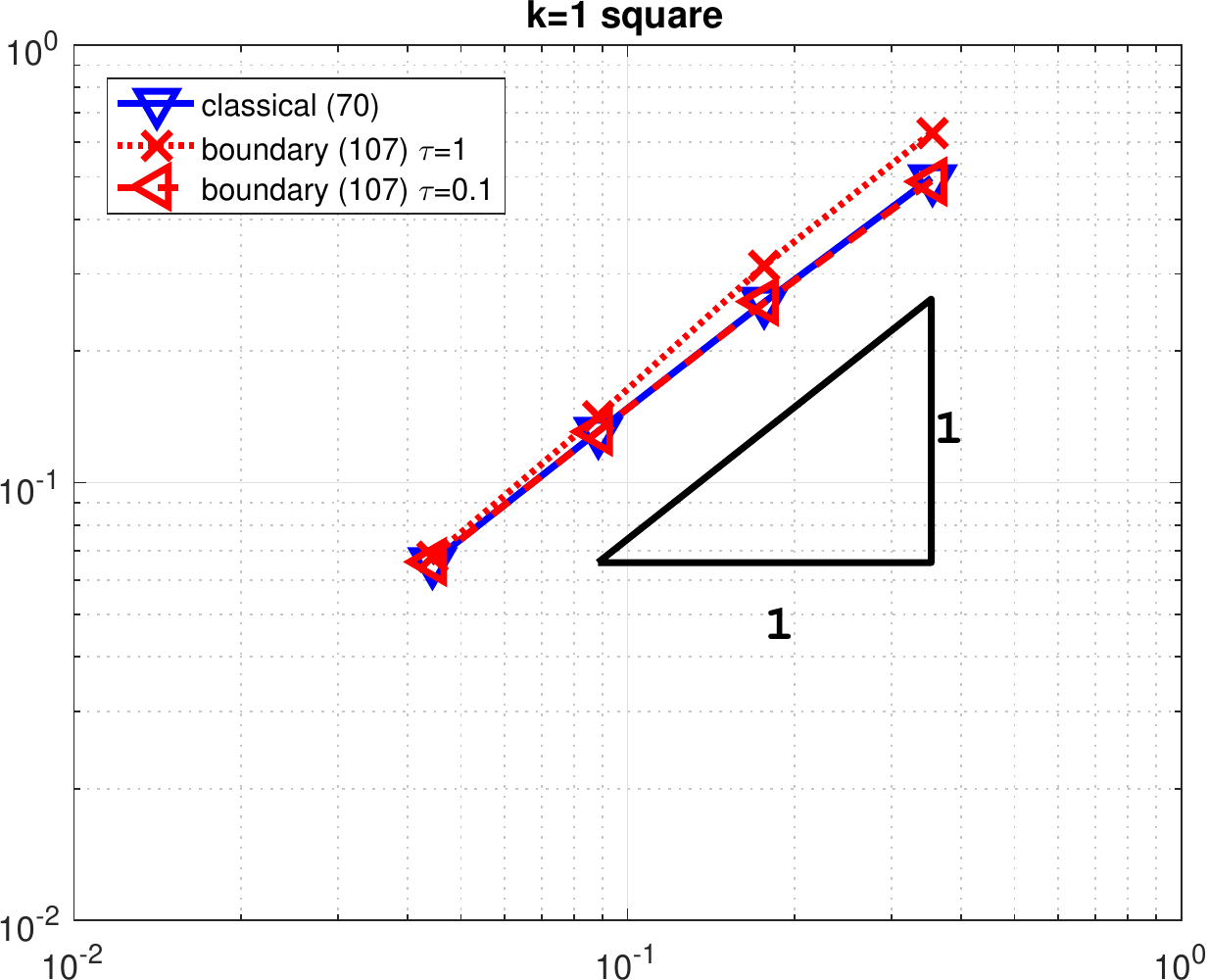}
  }
  \caption{$k=1$, \texttt{square} mesh}
 \label{fig:k=1-square-H1}
 \end{minipage}
\hfill
 \begin{minipage}[b]{0.49\textwidth}
  \centerline{
  \includegraphics[width=\textwidth]{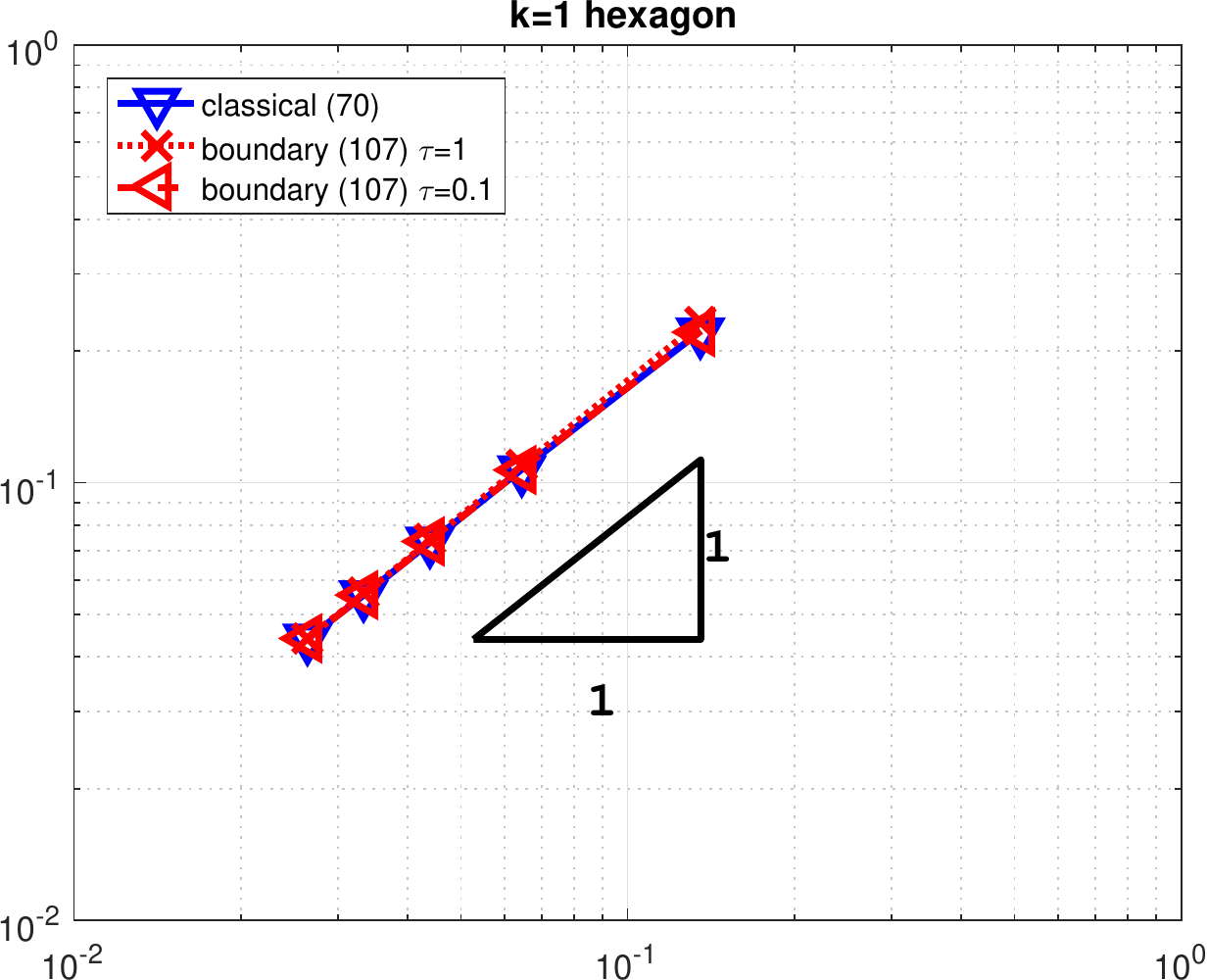}
  }
  \caption{$k=1$, \texttt{hexagon} mesh}
 \label{fig:k=1-hexagon-H1}
 \end{minipage}
\hfill
\end{figure}
\begin{figure}[ht]
\hfill
 \begin{minipage}[b]{0.49\textwidth}
  \centerline{
  \includegraphics[width=\textwidth]{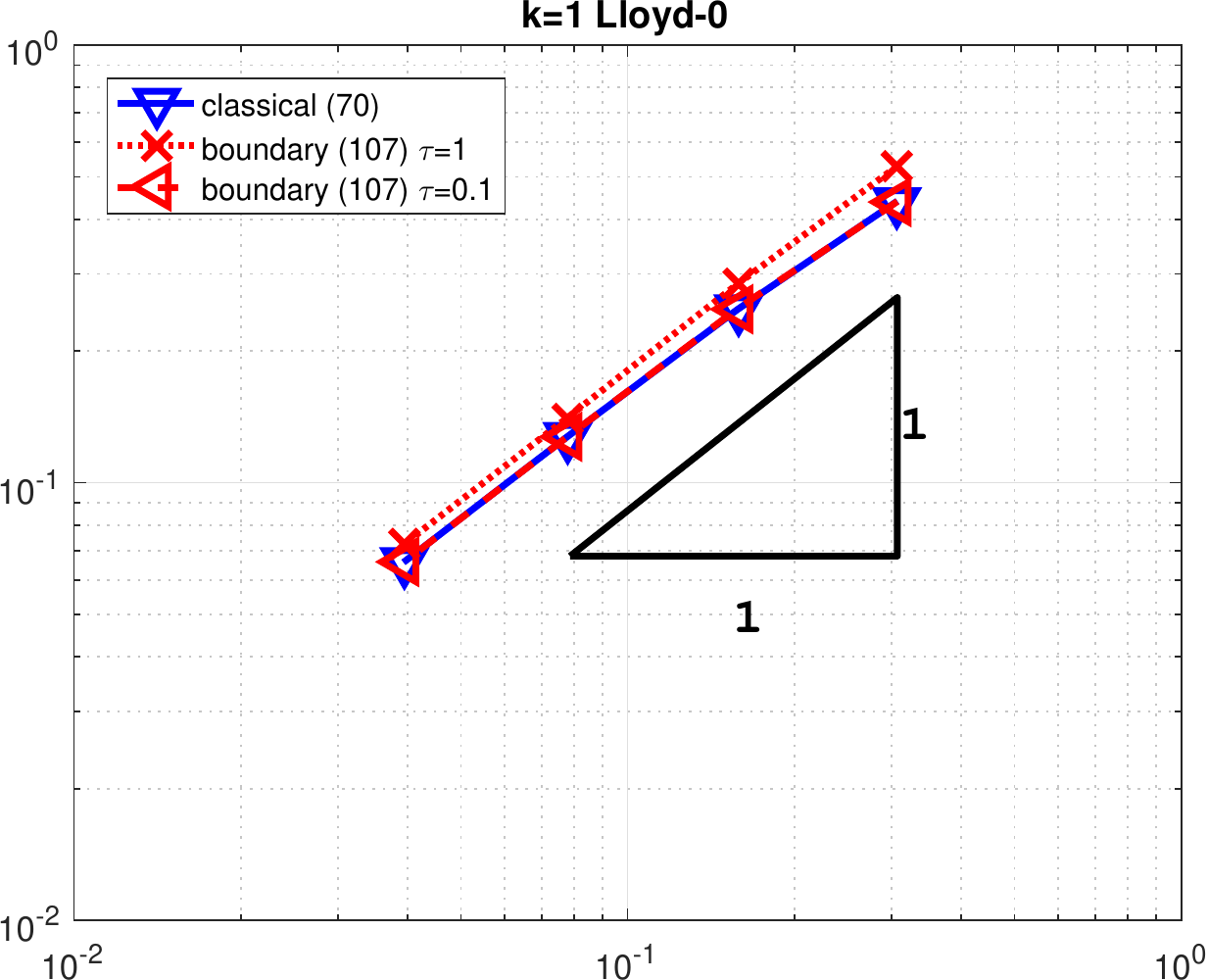}
  }
  \caption{$k=1$, \texttt{Lloyd-0} mesh}
 \label{fig:k=1-LLoyd-0-H1}
 \end{minipage}
\hfill
 \begin{minipage}[b]{0.49\textwidth}
  \centerline{
  \includegraphics[width=\textwidth]{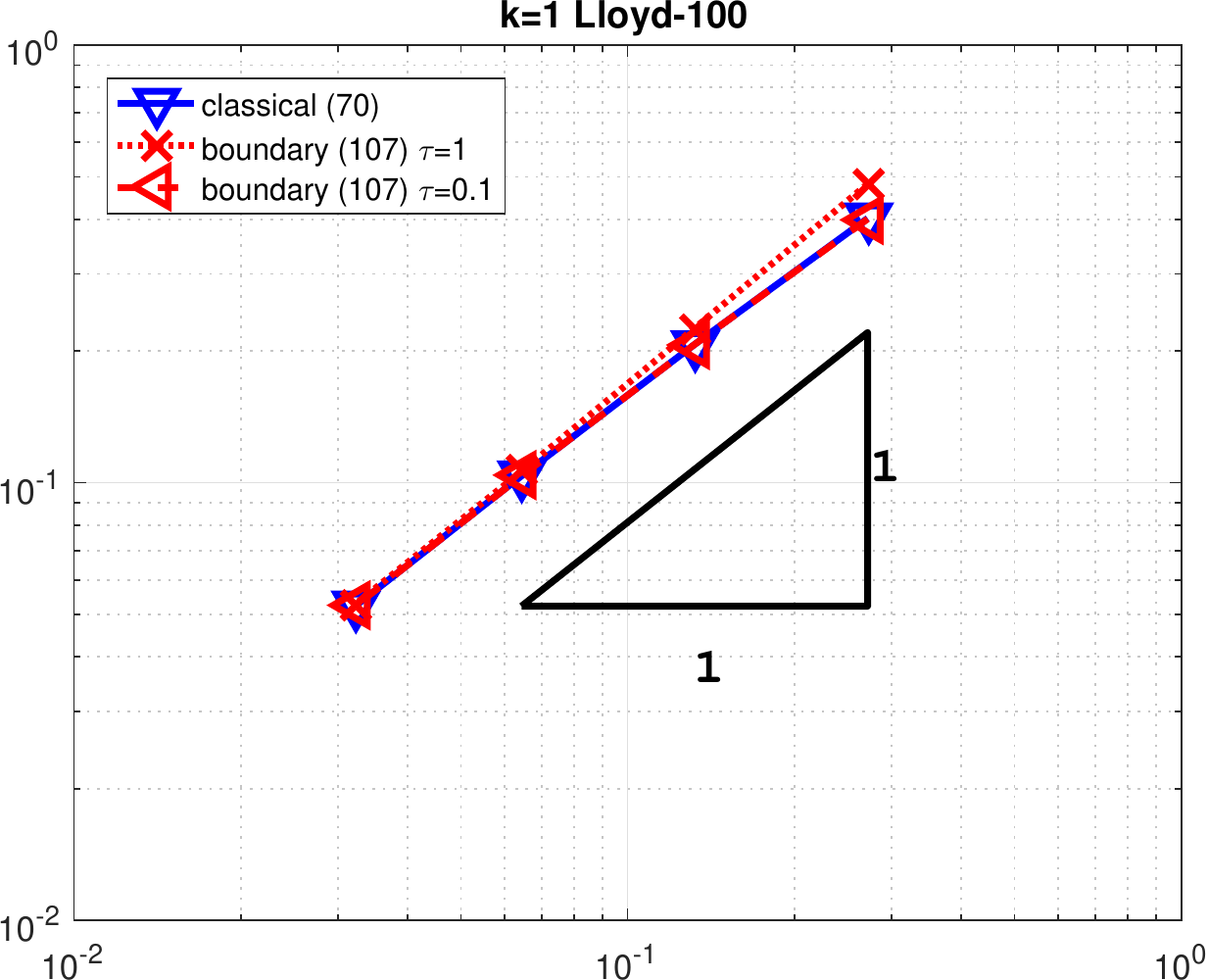}
  }
  \caption{$k=1$, \texttt{Lloyd-100} mesh}
 \label{fig:k=1-LLoyd-100-H1}
 \end{minipage}
\hfill
\end{figure}
\begin{figure}[ht]
\hfill
 \begin{minipage}[b]{0.49\textwidth}
  \centerline{
  \includegraphics[width=\textwidth]{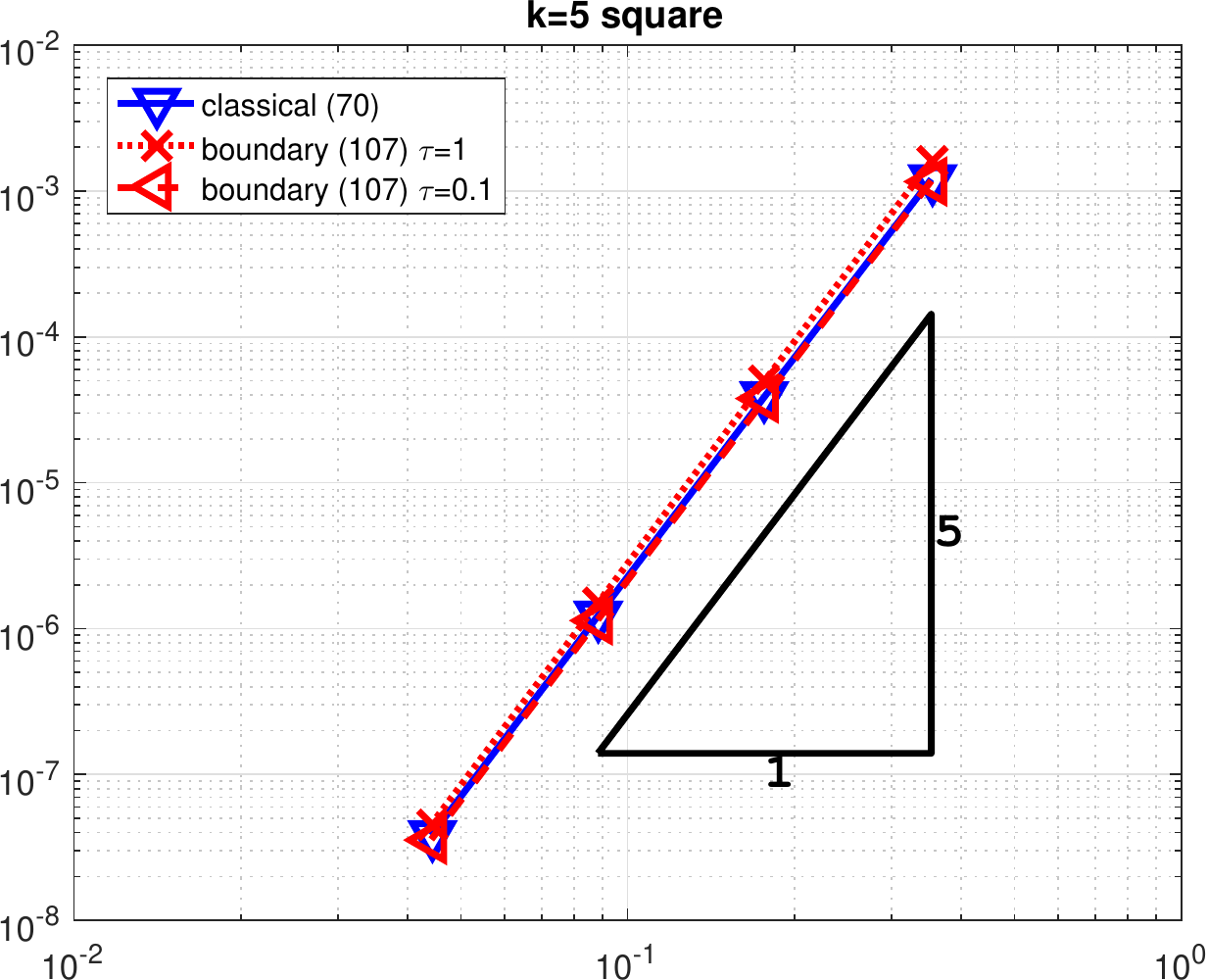}
  }
  \caption{$k=5$, \texttt{square} mesh}
 \label{fig:k=5-square-H1}
 \end{minipage}
\hfill
 \begin{minipage}[b]{0.49\textwidth}
  \centerline{
  \includegraphics[width=\textwidth]{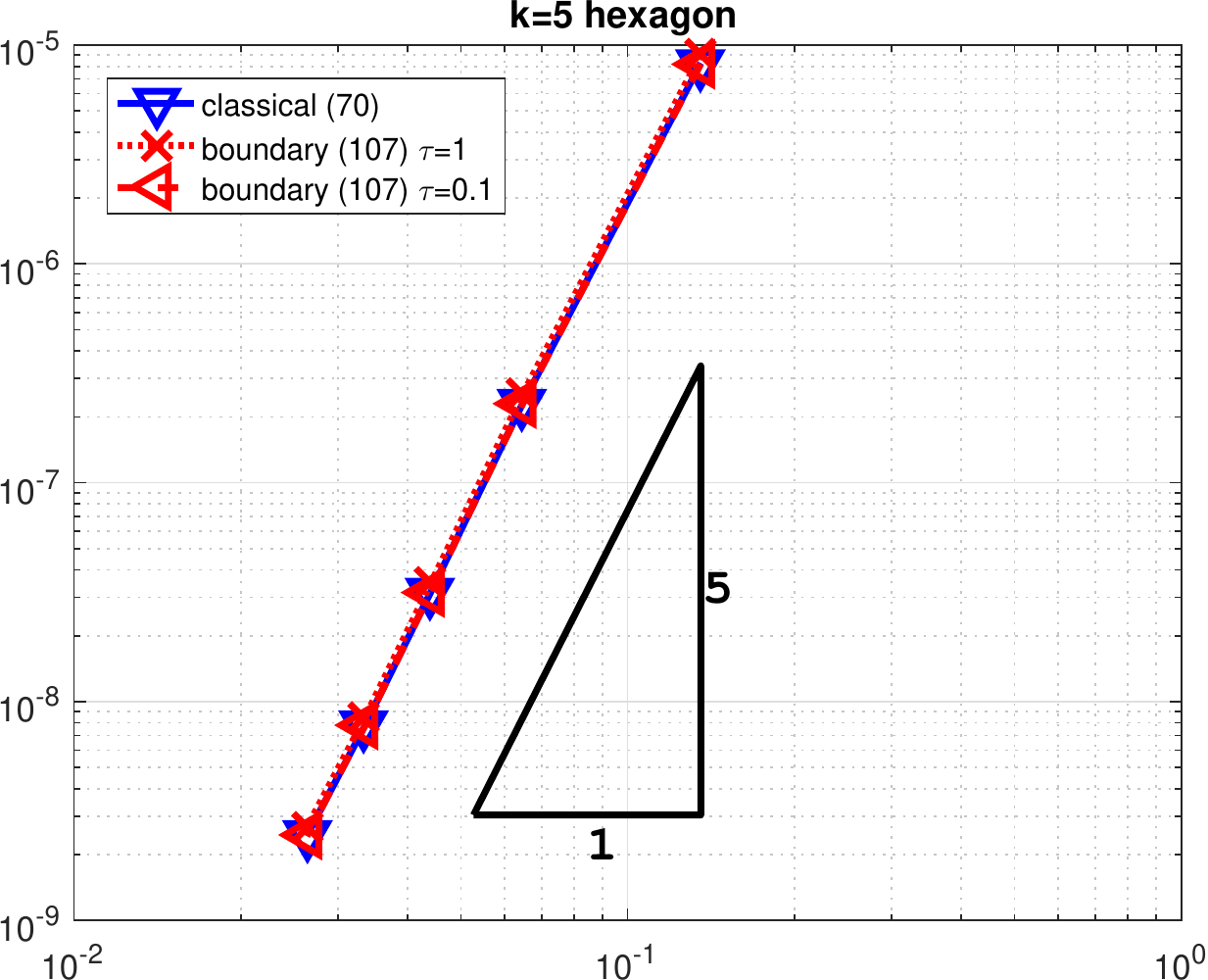}
  }
  \caption{$k=5$, \texttt{hexagon} mesh}
 \label{fig:k=5-hexagon-H1}
 \end{minipage}
\hfill
\end{figure}
\begin{figure}[ht]
\hfill
 \begin{minipage}[b]{0.49\textwidth}
  \centerline{
  \includegraphics[width=\textwidth]{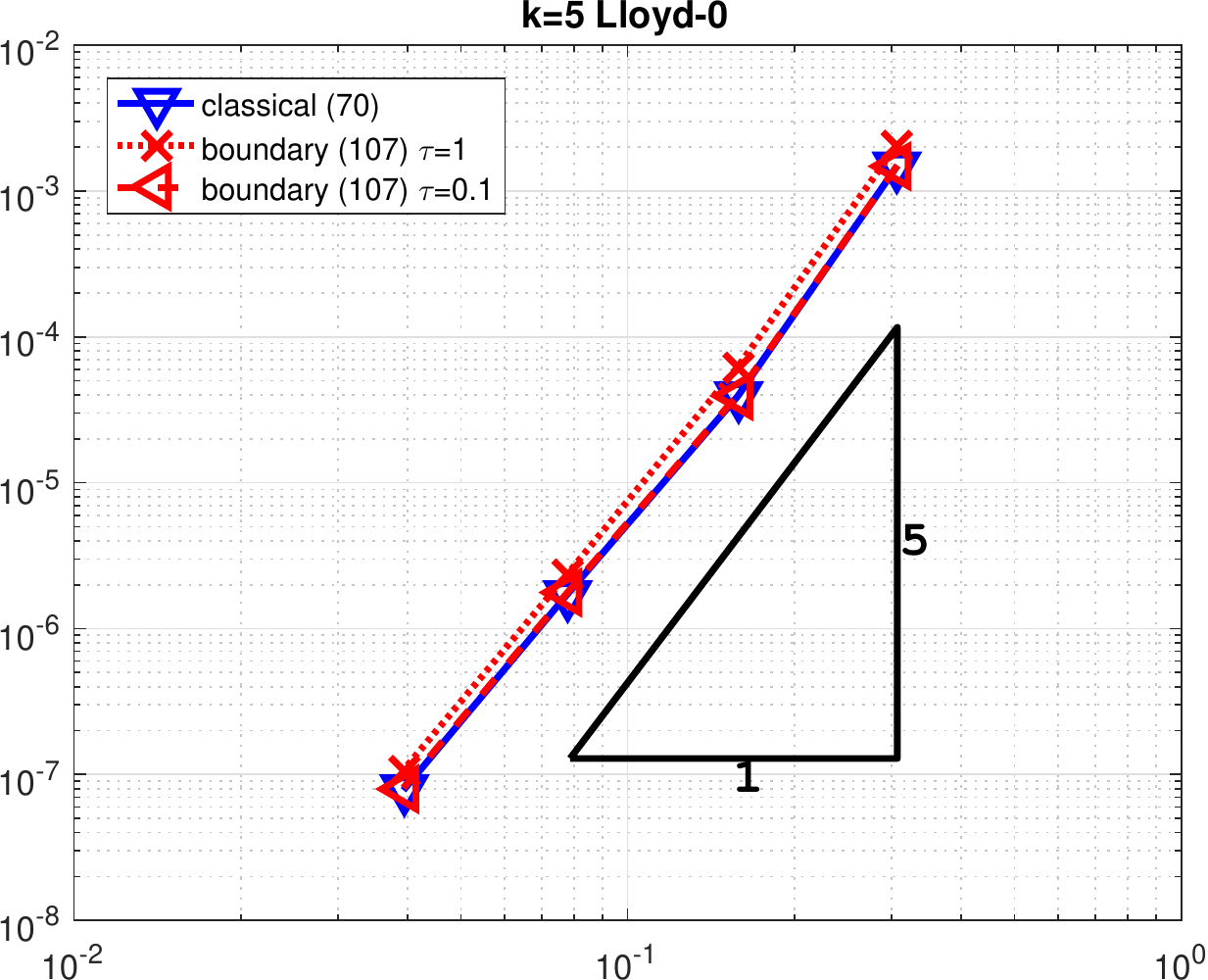}
  }
  \caption{$k=5$, \texttt{Lloyd-0} mesh}
 \label{fig:k=5-LLoyd-0-H1}
 \end{minipage}
\hfill
 \begin{minipage}[b]{0.49\textwidth}
  \centerline{
  \includegraphics[width=\textwidth]{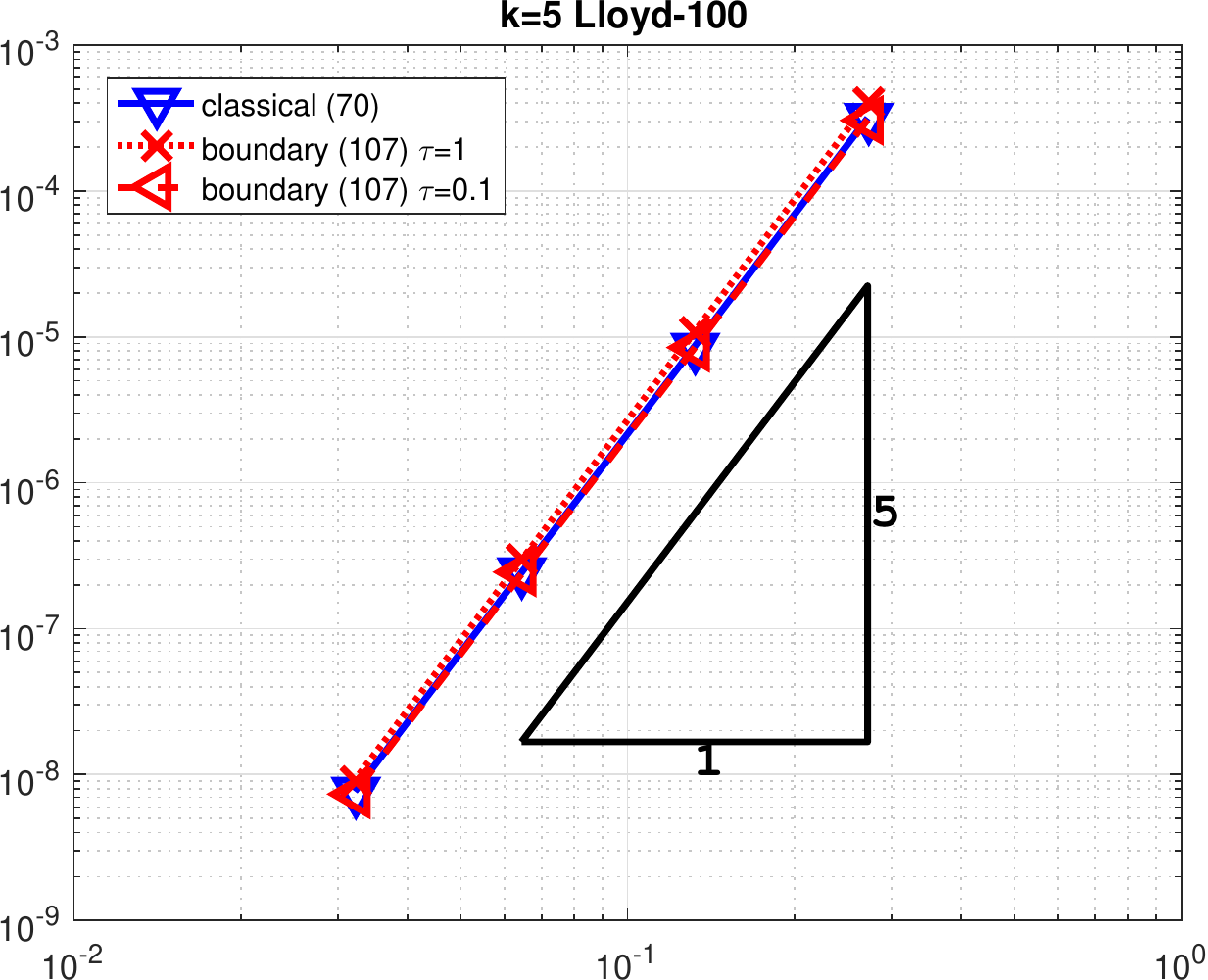}
  }
  \caption{$k=5$, \texttt{Lloyd-100} mesh}
 \label{fig:k=5-LLoyd-100-H1}
 \end{minipage}
\hfill
\end{figure}

\begin{center}
{\large {\bf Aknowledgements}}
\end{center}
The first author was partially supported by the European Research Council through the H2020 Consolidator Grant (grant no. 681162) CAVE -- Challenges and Advancements in Virtual Elements. This support is gratefully acknowledged. \\
The third authors was partially supported by the research funds of
the University of Milano-Bicocca. This support is gratefully acknowledged. \\
All the authors were partially supported by IMATI-CNR. This support is gratefully acknowledged.

\medskip

%%%%%%%%%%%%%%%%%%%%%%%%%%%%%

\bibliographystyle{amsplain}

\bibliography{general-bibliography}

%%%%%%%%%%%%%%%%%%%%%%%%%%%%%%%%%%%%%%%%%%%%%%%%
 \end{document}